\begin{document}
\def\e#1\e{\begin{equation}#1\end{equation}}
\def\ea#1\ea{\begin{align}#1\end{align}}
\def\eq#1{{\rm(\ref{#1})}}
\theoremstyle{plain}
\newtheorem{thm}{Theorem}[section]
\newtheorem{lem}[thm]{Lemma}
\newtheorem{prop}[thm]{Proposition}
\newtheorem{cor}[thm]{Corollary}
\theoremstyle{definition}
\newtheorem{dfn}[thm]{Definition}
\newtheorem{ex}[thm]{Example}
\newtheorem{rem}[thm]{Remark}
\newtheorem{conv}[thm]{Convention}
\numberwithin{figure}{section}
\def\dim{\mathop{\rm dim}\nolimits}
\def\codim{\mathop{\rm codim}\nolimits}
\def\depth{\mathop{\rm depth}\nolimits}
\def\Ker{\mathop{\rm Ker}}
\def\Coker{\mathop{\rm Coker}}
\def\sign{\mathop{\rm sign}}
\def\GL{\mathop{\rm GL}\nolimits}
\def\Im{\mathop{\rm Im}}
\def\id{\mathop{\rm id}\nolimits}
\def\Or{\mathop{\rm Or}}
\def\Man{{\mathop{\bf Man}}}
\def\Manb{{\mathop{\bf Man^b}}}
\def\Manc{{\mathop{\bf Man^c}}}
\def\dManc{{\mathop{\bf dMan^c}}}
\def\ge{\geqslant}
\def\le{\leqslant\nobreak}
\def\R{{\mathbin{\mathbb R}}}
\def\Z{{\mathbin{\mathbb Z}}}
\def\N{{\mathbin{\mathbb N}}}
\def\RP{{\mathbin{\mathbb{RP}}}}
\def\oM{{\mathbin{\smash{\,\,\overline{\!\!\mathcal M\!}\,}}}}
\def\al{\alpha}
\def\be{\beta}
\def\ga{\gamma}
\def\de{\delta}
\def\io{\iota}
\def\ep{\epsilon}
\def\la{\lambda}
\def\ka{\kappa}
\def\th{\theta}
\def\ze{\zeta}
\def\up{\upsilon}
\def\vp{\varphi}
\def\si{\sigma}
\def\om{\omega}
\def\De{\Delta}
\def\La{\Lambda}
\def\Om{\Omega}
\def\Up{\Upsilon}
\def\Ga{\Gamma}
\def\Si{\Sigma}
\def\Th{\Theta}
\def\pd{\partial}
\def\ts{\textstyle}
\def\sst{\scriptscriptstyle}
\def\w{\wedge}
\def\sm{\setminus}
\def\bu{\bullet}
\def\bs{\boldsymbol}
\def\op{\oplus}
\def\od{\odot}
\def\op{\oplus}
\def\ov{\overline}
\def\bigop{\bigoplus}
\def\bigot{\bigotimes}
\def\iy{\infty}
\def\es{\emptyset}
\def\ra{\rightarrow}
\def\Ra{\Rightarrow}
\def\ab{\allowbreak}
\def\longra{\longrightarrow}
\def\hookra{\hookrightarrow}
\def\dashra{\dashrightarrow}
\def\t{\times}
\def\ci{\circ}
\def\ti{\tilde}
\def\d{{\rm d}}
\def\md#1{\vert #1 \vert}
\def\bmd#1{\big\vert #1 \big\vert}
\def\an#1{\langle #1 \rangle}
\title{On manifolds with corners}
\author{Dominic Joyce}
\date{}
\maketitle
\begin{abstract}
Manifolds without boundary, and manifolds with boundary, are
universally known and loved in Differential Geometry, but {\it
manifolds with corners\/} (locally modelled on
$[0,\iy)^k\t\R^{n-k}$) have received comparatively little attention.
The basic definitions in the subject are not agreed upon, there are
several inequivalent definitions in use of manifolds with corners,
of boundary, and of smooth map, depending on the applications in
mind.

We present a theory of manifolds with corners which includes a new
notion of smooth map $f:X\ra Y$. Compared to other definitions, our
theory has the advantage of giving a category $\Manc$ of manifolds
with corners which is particularly well behaved as a category: it
has products and direct products, boundaries $\pd X$ behave in a
functorial way, and there are simple conditions for the existence of
fibre products $X\t_ZY$ in $\Manc$.

Our theory is tailored to future applications in Symplectic
Geometry, and is part of a project to describe the geometric
structure on moduli spaces of $J$-holomorphic curves in a new way.
But we have written it as a separate paper as we believe it is of
independent interest.
\end{abstract}

\section{Introduction}
\label{mc1}

Most of the literature in Differential Geometry discusses only
manifolds without boundary (locally modelled on $\R^n$), and a
smaller proportion manifolds with boundary (locally modelled on
$[0,\iy)\t\R^{n-1}$). Only a few authors have seriously studied {\it
manifolds with corners} (locally modelled on $[0,\iy)^k\t\R^{n-k}$).
They were first developed by Cerf \cite{Cerf} and Douady \cite{Doua}
in 1961, who were primarily interested in their Differential
Geometry. J\"anich \cite{Jani} used manifolds with corners to
classify actions of transformation groups on smooth manifolds.
Melrose \cite{Melr1,Melr2} and others study analysis of elliptic
operators on manifolds with corners. Laures \cite{Laur} defines a
cobordism theory for manifolds with corners, which has been applied
in Topological Quantum Field Theory, by Lauda and Pfeiffer
\cite{LaPf} for instance. Margalef-Roig and Outerelo Dominguez
\cite{MaOu} generalize manifolds with corners to
infinite-dimensional Banach manifolds.

How one sets up the theory of manifolds with corners is not
universally agreed, but depends on the applications one has in mind.
As we explain in Remarks \ref{mc2rem}, \ref{mc3rem} and
\ref{mc6rem2} which relate our work to that of other authors, there
are at least four inequivalent definitions of manifolds with
corners, two inequivalent definitions of boundary, and (including
ours) four inequivalent definitions of smooth map in use in the
literature. The purpose of this paper is to carefully lay down the
foundations of a theory of manifolds with corners, which includes a
new notion of {\it smooth map\/} $f:X\ra Y$ between manifolds with
corners.

The main issue here is that (in our theory) an $n$-manifold with
corners $X$ has a boundary $\pd X$ which is an $(n\!-\!1)$-manifold
with corners, and so by induction the $k$-fold boundary $\pd^kX$ is
an $(n\!-\!k)$-manifold with corners. How to define smooth maps
$f:X\ra Y$ in the interiors $X^\ci,Y^\ci$ is clear, but one must
also decide whether to impose compatibility conditions on $f$ over
$\pd^kX$ and $\pd^lY$, and it is not obvious how best to do this.
Our definition gives a nicely behaved category $\Manc$ of manifolds
with corners, and in particular we can give simple conditions for
the existence of {\it fibre products\/} in~$\Manc$.

The author's interest in manifolds with corners has to do with
applications in Symplectic Geometry. Moduli spaces
$\oM_{g,h}^{l,m}(M,L,J,\be)$ of stable $J$-holomorphic curves in a
symplectic manifold $(M,\om)$ with boundary in a Lagrangian $L$ are
used in Lagrangian Floer cohomology, open Gromov--Witten invariants,
and Symplectic Field Theory. What geometric structure should we put
on $\oM_{g,h}^{l,m}(M,L,J,\be)$? Hofer, Wysocki and Zehnder
\cite{HWZ} make $\oM_{g,h}^{l,m}(M,L,J,\be)$ into a {\it polyfold\/}
(actually, the zeroes of a Fredholm section of a polyfold bundle
over a polyfold). Fukaya, Oh, Ohta and Ono \cite{FOOO} make
$\oM_{g,h}^{l,m}(M,L,J,\be)$ into a {\it Kuranishi space}. But the
theory of Kuranishi spaces is still relatively unexplored, and in
the author's view even the definition is not satisfactory.

In \cite{Joyc} the author will develop theories of {\it
d-manifolds\/} and {\it d-orbifolds}. Here {\it d-manifolds\/} are a
simplified version of Spivak's {\it derived manifolds\/}
\cite{Spiv}, a diff\-er\-ent\-ial-geo\-met\-ric offshoot of Jacob
Lurie's Derived Algebraic Geometry programme, and d-orbifolds are
the orbifold version of d-manifolds. We will argue that the
`correct' way to define Kuranishi spaces is as {\it d-orbifolds with
corners}, which will help to make \cite{FOOO} more rigorous. In
future the author hopes also to show that polyfolds can be truncated
to d-orbifolds with corners, building a bridge between the theories
of Fukaya et al.\ \cite{FOOO} and Hofer et al.~\cite{HWZ}.

To define d-manifolds and d-orbifolds with corners we first need a
theory of manifolds with corners, and we develop it here in a
separate paper as we believe it is of independent interest. For
\cite{Joyc} and later applications in Symplectic Geometry, it is
important that boundaries and fibre products in $\Manc$ should be
well-behaved. The author strongly believes that the theory we set
out here, in particular our definition of smooth map, is the `right'
definition for these applications. As evidence for this, note that
in \cite{Joyc} we will show that if $\bs X,\bs Y$ are d-manifolds
with corners, $Z$ is a manifold with corners, and $f:\bs X\ra Z$,
$g:\bs Y\ra Z$ are arbitrary smooth maps, then a fibre product $\bs
X\t_{f,Z,g}\bs Y$ exists in the 2-category $\dManc$ of d-manifolds
with corners. The fact that this works is crucially dependent on the
details of our definition of smooth map.

We begin in \S\ref{mc2} with definitions and properties of manifolds
with corners $X$, their boundaries $\pd X$, $k$-boundaries $\pd^kX$
and $k$-corners $C_k(X)\cong \pd^kX/S_k$. Section \ref{mc3} defines
and studies smooth maps $f:X\ra Y$ of manifolds with corners, and
\S\ref{mc4} explains two ways to encode how a smooth $f:X\ra Y$
relates $\pd^kX$ and $\pd^lY$ for $k,l\ge 0$. Sections
\ref{mc5}--\ref{mc7} discuss submersions, transversality and fibre
products of manifolds with corners, and orientations and orientation
conventions. The proofs of Theorems \ref{mc6thm1} and \ref{mc6thm2},
two of our main results on fibre products, are postponed to
\S\ref{mc8} and~\S\ref{mc9}.

The author would like to thank Franki Dillen for pointing out
reference~\cite{MaOu}.

\section{Manifolds with corners, and boundaries}
\label{mc2}

We define {\it manifolds without boundary}, {\it with boundary}, and
{\it with corners}.

\begin{dfn} Let $X$ be a paracompact Hausdorff topological space.
\begin{itemize}
\setlength{\itemsep}{0pt}
\setlength{\parsep}{0pt}
\item[(i)] An {\it $n$-dimensional chart on\/ $X$ without boundary\/} is
a pair $(U,\phi)$, where $U$ is an open subset in $\R^n$, and
$\phi:U\ra X$ is a homeomorphism with a nonempty open set $\phi(U)$
in~$X$.
\item[(ii)] An {\it $n$-dimensional chart on\/ $X$ with boundary\/} for
$n\ge 1$ is a pair $(U,\phi)$, where $U$ is an open subset in $\R^n$
or in $[0,\iy)\t\R^{n-1}$, and $\phi:U\ra X$ is a homeomorphism with
a nonempty open set $\phi(U)$.
\item[(iii)] An {\it $n$-dimensional chart on\/ $X$ with corners\/}
for $n\ge 1$ is a pair $(U,\phi)$, where $U$ is an open subset in
$\R^n_k=[0,\iy)^k\t\R^{n-k}$ for some $0\le k\le n$, and $\phi:U\ra
X$ is a homeomorphism with a nonempty open set~$\phi(U)$.
\end{itemize}
These are increasing order of generality, that is, (i) $\Rightarrow$
(ii) $\Rightarrow$ (iii).

For brevity we will use the notation $\R^n_k=[0,\iy)^k\t\R^{n-k}$,
following~\cite[\S 1.1]{Melr2}.

Let $A\subseteq\R^m$ and $B\subseteq\R^n$ and $\al:A\ra B$ be
continuous. We call $\al$ {\it smooth\/} if it extends to a smooth
map between open neighbourhoods of $A,B$, that is, if there exists
an open subset $A'$ of $\R^m$ with $A\subseteq A'$ and a smooth map
$\al':A'\ra\R^n$ with $\al'\vert_A\equiv\al$. If $A$ is open we can
take $A'=A$ and $\al'=\al$. When $m=n$ we call $\al:A\ra B$ a {\it
diffeomorphism\/} if it is a homeomorphism and $\al:A\ra B$,
$\al^{-1}:B\ra A$ are smooth.

Let $(U,\phi),(V,\psi)$ be $n$-dimensional charts on $X$, which may
be without boundary, or with boundary, or with corners. We call
$(U,\phi)$ and $(V,\psi)$ {\it compatible\/} if
$\psi^{-1}\ci\phi:\phi^{-1}\bigl(\phi(U)\cap\psi(V)\bigr)\ra
\psi^{-1}\bigl(\phi(U)\cap\psi(V)\bigr)$ is a diffeomorphism between
subsets of $\R^n$, in the sense above.

An $n$-{\it dimensional atlas\/} for $X$ {\it without boundary},
{\it with boundary}, or {\it with corners}, is a system
$\{(U^i,\phi^i):i\in I\}$ of pairwise compatible $n$-dimensional
charts on $X$ with $X=\bigcup_{i\in I}\phi^i(U^i)$, where the
$(U^i,\phi^i)$ are with boundary, or with corners, respectively. We
call such an atlas {\it maximal\/} if it is not a proper subset of
any other atlas. Any atlas $\{(U^i,\phi^i):i\in I\}$ is contained in
a unique maximal atlas of the same type, the set of all charts
$(U,\phi)$ of this type on $X$ which are compatible with
$(U^i,\phi^i)$ for all~$i\in I$.

An $n$-{\it dimensional manifold without boundary}, or {\it with
boundary}, or {\it with corners}, is a paracompact Hausdorff
topological space $X$ equipped with a maximal $n$-dimensional atlas
without boundary, or with boundary, or with corners, respectively.
Usually we refer to $X$ as the manifold, leaving the atlas implicit,
and by a {\it chart\/ $(U,\phi)$ on the manifold\/} $X$, we mean an
element of the maximal atlas. When we just say manifold, we will
usually mean a manifold with corners.
\label{mc2def1}
\end{dfn}

Here is some notation on (co)tangent spaces.

\begin{dfn} Let $X$ be an $n$-manifold with corners. A map $f:X\ra\R$ is
called {\it smooth\/} if whenever $(U,\phi)$ is a chart on the
manifold $X$ then $f\ci\phi:U\ra\R$ is a smooth map between subsets
of $\R^n,\R$, in the sense of Definition \ref{mc2def1}. Write
$C^\iy(X)$ for the $\R$-algebra of smooth functions~$f:X\ra\R$.

Following \cite[p.~4]{KoNo}, for each $x\in X$ define the {\it
tangent space\/} $T_xX$ by
\begin{align*}
T_xX=\bigl\{v:\,&\text{$v$ is a linear map $C^\iy(X)\ra\R$
satisfying}\\
&\text{$v(fg)=v(f)g(x)+f(x)v(g)$ for all $f,g\in C^\iy(X)$}\bigr\},
\end{align*}
and define the {\it cotangent space\/} $T_x^*X=(T_xX)^*$. Both are
vector spaces of dimension $n$. For each $x\in X$, we define the
{\it inward sector\/} $IS(T_xX)$ of vectors which `point into $X$',
as follows: let $(U,\phi)$ be a chart on $X$ with $U\subseteq\R^n_k$
open and $0\in U$ with $\phi(0)=x$. Then
$\d\phi\vert_0:T_0\R^n_k=\R^n\ra T_xX$ is an isomorphism. Set
$IS(T_xX)=\d\phi\vert_0(\R^n_k)\subseteq T_xX$. This is independent
of the choice of~$(U,\phi)$.
\label{mc2def2}
\end{dfn}

We now study the notion of {\it boundary\/} $\pd X$ for
$n$-manifolds $X$ with corners.

\begin{dfn} Let $U\subseteq\R^n_k$ be open. For each
$u=(u_1,\ldots,u_n)$ in $U$, define the {\it depth\/} $\depth_Uu$ of
$u$ in $U$ to be the number of $u_1,\ldots,u_k$ which are zero. That
is, $\depth_Uu$ is the number of boundary faces of $U$
containing~$u$.

Let $X$ be an $n$-manifold with corners. For $x\in X$, choose a
chart $(U,\phi)$ on the manifold $X$ with $\phi(u)=x$ for $u\in U$,
and define the {\it depth\/} $\depth_Xx$ of $x$ in $X$ by
$\depth_Xx=\depth_Uu$. This is independent of the choice of
$(U,\phi)$. For each $k=0,\ldots,n$, define the {\it depth\/ $k$
stratum\/} of $X$ to be
\begin{equation*}
S^k(X)=\bigl\{x\in X:\depth_Xx=k\bigr\}.
\end{equation*}\vspace{-20pt}
\label{mc2def3}
\end{dfn}

The proof of the next proposition is elementary.

\begin{prop} Let\/ $X$ be an $n$-manifold with corners. Then
\begin{itemize}
\setlength{\itemsep}{0pt}
\setlength{\parsep}{0pt}
\item[{\bf(a)}] $X=\coprod_{k=0}^nS^k(X),$ and\/
$\overline{S^k(X)}=\bigcup_{l=k}^n S^l(X);$
\item[{\bf(b)}] Each\/ $S^k(X)$ has the structure of an
$(n-k)$-manifold without boundary;
\item[{\bf(c)}] $X$ is a manifold without boundary if and only
if\/ $S^k(X)=\es$ for\/ $k>0;$
\item[{\bf(d)}] $X$ is a manifold with boundary if and only if\/
$S^k(X)=\es$ for\/ $k>1;$ and
\item[{\bf(e)}] If\/ $x\in S^k(X)$ then $IS(T_xX)$ in $T_xX$ is
isomorphic to $\R^n_k$ in $\R^n$. Also the intersection
$IS(T_xX)\cap -IS(T_xX)$ in $T_xX$ is~$T_xS^k(X)\cong\R^{n-k}$.
\end{itemize}
\label{mc2prop1}
\end{prop}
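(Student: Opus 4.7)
The overall strategy is that every assertion is local and invariant under diffeomorphisms of open subsets of $\R^n_k$, so it suffices to verify each claim in the model $\R^n_k$ and transport via charts. In $\R^n_k$, the depth function $\depth_Uu$ is simply the count of zero entries among $u_1,\ldots,u_k$, which is clearly preserved by any diffeomorphism between open subsets of any $\R^n_l$, so $\depth_Xx$ is well-defined.

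For part (a), the disjoint union decomposition is immediate since $\depth_Xx$ takes a single value in $\{0,\ldots,n\}$ at each $x$. For the closure identity, I would work in a chart $(U,\phi)$ with $U\subseteq\R^n_k$: a point $u\in U$ of depth $l\geq k$ is approached by perturbing $l-k$ of its zero entries in $u_1,\ldots,u_k$ to small positive numbers, showing $S^l(X)\subseteq\overline{S^k(X)}$; conversely a limit of points with at least $k$ zero entries among their first $k$ coordinates retains at least $k$ zero entries in the limit, giving $\overline{S^k(X)}\subseteq\bigcup_{l\geq k}S^l(X)$. For part (b), given a chart $(U,\phi)$ with $U\subseteq\R^n_k$, restriction to $U\cap(\{0\}^k\t\R^{n-k})$ yields an open subset of $\R^{n-k}$ whose image under $\phi$ is the depth-$k$ part of $\phi(U)$, and the transition maps between overlapping charts restrict to diffeomorphisms between such subsets because boundary strata are diffeomorphism-invariant; this produces an $(n-k)$-dimensional atlas without boundary on $S^k(X)$.

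Parts (c) and (d) combine the local and global viewpoints. If $X$ is without boundary or with boundary then no chart has $k\geq 2$ (respectively $k\geq 1$ for no-boundary), so high-depth strata are empty. Conversely, if $S^l(X)=\es$ for $l>0$ (resp.\ $l>1$), then every corners chart $(U,\phi)$ with $U\subseteq\R^n_k$ has image contained in the depth-$0$ (resp.\ depth-$\le 1$) part, so by shrinking each chart around each point I can arrange that $U$ lies in $(0,\iy)^k\t\R^{n-k}$ (resp.\ in a region where at most one of $u_1,\ldots,u_k$ vanishes), and such a $U$ is diffeomorphic to an open subset of $\R^n$ (resp.\ of $[0,\iy)\t\R^{n-1}$). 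Passing to the maximal atlas of the new type gives the desired manifold structure. Part (e) is essentially the definition: by Definition \ref{mc2def2}, the isomorphism $\d\phi\vert_0:\R^n\ra T_xX$ carries $\R^n_k\subseteq\R^n$ onto $IS(T_xX)\subseteq T_xX$, and the subset $\R^n_k\cap-\R^n_k=\{0\}^k\t\R^{n-k}$ is carried onto the tangent space to the depth-$k$ stratum through $x$ provided by part (b).

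The only mildly delicate point is part (c)/(d), where one must check that restricting corner charts to neighbourhoods avoiding the excluded strata genuinely produces a compatible atlas of the new type; this requires noting that the transition maps of the shrunken charts are diffeomorphisms between subsets of $\R^n$ or of $[0,\iy)\t\R^{n-1}$ in the sense of Definition \ref{mc2def1}, which follows because they are restrictions of the original corner transition maps. Everything else is a direct unravelling of definitions.
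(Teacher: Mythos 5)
The paper does not actually print a proof of this proposition (it is dismissed as elementary), and your chart-by-chart verification in the local models $\R^n_k$, using the chart-independence of $\depth_Xx$, is exactly the intended argument; it is correct. The one point worth making explicit is that every $x\in S^k(X)$ admits a chart with $U$ open in $\R^n_k$ \emph{exactly} (obtained by the same shrink-and-permute step you use for (c)/(d)), which is what justifies describing $S^k(X)$ locally as $U\cap(\{0\}^k\times\R^{n-k})$ in (b) and identifying $IS(T_xX)\cap-IS(T_xX)$ with $\{0\}^k\times\R^{n-k}$ in (e).
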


\begin{dfn} Let $X$ be a manifold with corners, and $x\in X$. A
{\it local boundary component\/ $\be$ of\/ $X$ at\/} $x$ is a local
choice of connected component of $S^1(X)$ near $x$. That is, for
each sufficiently small open neighbourhood $V$ of $x$ in $X$, $\be$
gives a choice of connected component $W$ of $V\cap S^1(X)$ with
$x\in\overline W$, and any two such choices $V,W$ and $V',W'$ must
be compatible in the sense that~$x\in\overline{(W\cap W')}$.
\label{mc2def4}
\end{dfn}

The meaning of local boundary components in coordinate charts is
easy to explain. Suppose $(U,\phi)$ is a chart on $X$ with
$\phi(u)=x$, where $U$ is an open set in $\R^n_k$, and write
$u=(u_1,\ldots,u_n)$. Then
\begin{equation*}
S^1(U)=\ts\coprod_{i=1}^k\bigl\{(x_1,\ldots,x_n)\in U:\text{$x_i=0$,
$x_j\ne 0$ for $j\ne i$, $j=1,\ldots,k$}\bigr\}.
\end{equation*}
If $u_i=0$ for some $i=1,\ldots,k$, then $\bigl\{(x_1,\ldots,x_n)\in
U:x_i=0$, $x_j\ne 0$ for $j\ne i$, $j=1,\ldots,k\bigr\}$ is a subset
of $S^1(U)$ whose closure contains $u$, and the intersection of this
subset with any sufficiently small open ball about $u$ is connected,
so this subset uniquely determines a local boundary component of $U$
at $u$, and hence a local boundary component of $X$ at $x$. Thus,
the local boundary components of $X$ at $x$ are in 1-1
correspondence with those $i=1,\ldots,k$ with $u_i=0$. But the
number of such $i$ is the depth $\depth_Xx$. Hence {\it there are
exactly $\depth_Xx$ distinct local boundary components\/ $\be$ of\/
$X$ at\/ $x$ for each\/}~$x\in X$.

\begin{dfn} Let $X$ be a manifold with corners. As a set,
define the {\it boundary\/}
\e
\pd X=\bigl\{(x,\be):\text{$x\in X$, $\be$ is a local boundary
component for $X$ at $x$}\bigr\}.
\label{mc2eq1}
\e
Define a map $i_X:\pd X\ra X$ by $i_X:(x,\be)\mapsto x$. Note that
$i_X$ {\it need not be injective}, as $\bmd{i_X^{-1}(x)}=\depth_Xx$
for all~$x\in X$.
\label{mc2def5}
\end{dfn}

If $(U,\phi)$ is a chart on the manifold $X$ with $U\subseteq\R^n_k$
open, then for each $i=1,\ldots,k$ we can define a chart
$(U_i,\phi_i)$ on $\pd X$ by
\e
\begin{split}
&U_i=\bigl\{(x_1,\ldots,x_{n-1})\in \R^{n-1}_{k-1}:
(x_1,\ldots,x_{i-1},0,x_i,\ldots,x_{n-1})\in
U\subseteq\R^n_k\bigr\},\\
&\phi_i:(x_1,\ldots,x_{n-1})\longmapsto\bigl(\phi
(x_1,\ldots,x_{i-1}, 0,x_i,\ldots,x_{n-1}),\phi_*(\{x_i=0\})\bigr).
\end{split}
\label{mc2eq2}
\e
All such charts on $\pd X$ are compatible, and induce a manifold
structure on $\pd X$. Thus as in Douady \cite[\S 6]{Doua} we may
prove:

\begin{prop} Let\/ $X$ be an $n$-manifold with corners. Then $\pd
X$ is naturally an $(n\!-\!1)$-manifold with corners for $n>0,$
and\/ $\pd X=\es$ if\/~$n=0$.
\label{mc2prop2}
\end{prop}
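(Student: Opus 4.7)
The plan is to equip $\pd X$ with a topology, build the charts \eqref{mc2eq2} into a maximal atlas, and verify the Hausdorff and paracompactness conditions; the $n=0$ case is handled separately.

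First I would treat the case $n=0$. If $X$ is $0$-dimensional, every chart has $U\subseteq\R^0_0=\{0\}$, so $S^1(X)=\es$ by Proposition \ref{mc2prop1}(c), so no point $x\in X$ admits a local boundary component in the sense of Definition \ref{mc2def4}, and hence $\pd X=\es$ by \eqref{mc2eq1}.

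For $n>0$, I would assume each chart $(U,\phi)$ on $X$ with $U\subseteq\R^n_k$ produces, for each $i=1,\ldots,k$, a set-theoretic map $\phi_i:U_i\to\pd X$ as in \eqref{mc2eq2}. I would show each $\phi_i$ is injective and its image is $\bigl\{(x,\be)\in\pd X:x\in\phi(U),\ \be$ corresponds to $\{x_i=0\}$ under $\phi\bigr\}$. I would give $\pd X$ the topology generated by declaring all such subsets open and each $\phi_i$ a homeomorphism onto its image; because the local boundary components of $X$ at $x\in\phi(U)$ are in bijection with the indices $i\in\{1,\dots,k\}$ with $u_i=0$ (as explained after Definition \ref{mc2def4}), the images of the $\phi_i$ from a fixed chart are disjoint, and one checks compatibility under change of chart shows this topology is well-defined. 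Hausdorffness is then reduced to two cases: distinct points $(x,\be)\neq(x',\be')$ with $x\neq x'$ are separated using Hausdorffness of $X$ and continuity of $i_X$, while points with $x=x'$ but $\be\neq\be'$ are separated by choosing a chart $(U,\phi)$ at $x$ and using that $\be,\be'$ correspond to distinct indices $i\neq i'$, so lie in the disjoint opens $\phi_i(U_i)$ and $\phi_{i'}(U_{i'})$. Paracompactness of $\pd X$ follows from paracompactness of $X$ and the fact that $i_X$ is locally a finite-to-one map of second-countable spaces on any chart domain.

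The main step, and the principal obstacle, is checking that distinct charts from \eqref{mc2eq2} are pairwise compatible as $(n\!-\!1)$-dimensional charts on $\pd X$ with corners. Suppose $(U,\phi)$ and $(V,\psi)$ are charts on $X$ with $U\subseteq\R^n_k$, $V\subseteq\R^n_l$, overlapping at $x=\phi(u)=\psi(v)$, and let $\Phi=\psi^{-1}\ci\phi$ be the transition diffeomorphism. The crucial fact is that $\Phi$ preserves depth: since depth equals the codimension of $T_xS^d(X)\cong\R^{n-d}$ inside $IS(T_xX)\cap{-}IS(T_xX)$ by Proposition \ref{mc2prop1}(e) and these are defined invariantly, $\Phi$ permutes the boundary hyperplanes $\{x_i=0\}$ of $\R^n_k$ meeting $u$ with those $\{x_j=0\}$ of $\R^n_l$ meeting $v$. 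Hence for each $i\in\{1,\dots,k\}$ with $u_i=0$ there is a unique $j\in\{1,\dots,l\}$ with $v_j=0$ so that the local boundary component determined by $x_i=0$ agrees with that determined by $x_j=0$, giving $k=l$ locally along the overlap and a well-defined correspondence $i\mapsto j$. The composition $\psi_j^{-1}\ci\phi_i$ is then, by \eqref{mc2eq2}, precisely the restriction of $\Phi$ to the hyperplane $\{x_i=0\}$ followed by dropping the $j$-th coordinate of the image, viewed as a map between open subsets of $\R^{n-1}_{k-1}$ and $\R^{n-1}_{l-1}$. Since $\Phi$ extends smoothly to an open neighbourhood in $\R^n$ and maps $\{x_i=0\}$ diffeomorphically to $\{x_j=0\}$, this restriction extends to a smooth map on an open neighbourhood in $\R^{n-1}$, with smooth inverse coming from $\Phi^{-1}$; so $\psi_j^{-1}\ci\phi_i$ is a diffeomorphism in the sense of Definition \ref{mc2def1}.

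Finally, the collection of all charts \eqref{mc2eq2} coming from the maximal atlas of $X$ covers $\pd X$ by construction, is pairwise compatible by the step above, and extends to a unique maximal $(n\!-\!1)$-dimensional atlas with corners on $\pd X$, completing the proof.
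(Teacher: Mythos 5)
Your proposal is correct and follows essentially the same route as the paper, which simply constructs the charts $(U_i,\phi_i)$ of \eqref{mc2eq2}, asserts their compatibility, and cites Douady; you have filled in the compatibility check (via depth-preservation of transition maps), the topology, and the Hausdorff/paracompactness verifications that the paper leaves implicit. The only small imprecision is the phrase ``$k=l$ locally along the overlap'': the two chart domains may sit in $\R^n_k$ and $\R^n_l$ with $k\ne l$; what is actually equal is the number of faces through $u$ and through $v$, namely $\depth_Xx$, which is all your argument uses.
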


The map $i_X:\pd X\ra X$ is continuous, and we will see in
\S\ref{mc3} that it is smooth. By considering the local models
$\R^n_k$ for $X$, we see:

\begin{lem} As a map between topological spaces, $i_X:\pd X\ra X$ in
Definition {\rm\ref{mc2def5}} is continuous, finite (that is,
$i_X^{-1}(x)$ is finite for all\/ $x\in X$), and proper (that is,
if\/ $S\subseteq X$ is compact then $i_X^{-1}(S)\subseteq\pd X$ is
compact).
\label{mc2lem}
\end{lem}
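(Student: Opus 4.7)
The plan is to verify each of the three properties by reducing to the local chart description \eq{mc2eq2} of the boundary charts $(U_i,\phi_i)$ on $\pd X$ lying over a chart $(U,\phi)$ on $X$ with $U\subseteq\R^n_k$.

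Continuity and finiteness are immediate. In coordinates, $\phi^{-1}\ci i_X\ci\phi_i$ is the affine inclusion $(x_1,\ldots,x_{n-1})\mapsto(x_1,\ldots,x_{i-1},0,x_i,\ldots,x_{n-1})$ of $U_i\subseteq\R^{n-1}_{k-1}$ into $U\subseteq\R^n_k$; since charts of the form $(U_i,\phi_i)$ cover $\pd X$, this gives continuity. Finiteness follows from the observation preceding Definition \ref{mc2def5}: local boundary components of $X$ at $x$ correspond bijectively to those indices $i\in\{1,\ldots,k\}$ for which the $i$-th coordinate of $\phi^{-1}(x)$ vanishes, so $i_X^{-1}(x)$ has exactly $\depth_Xx\le n$ elements.

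The substantive step is properness. Given a compact $S\subseteq X$, use that $X$ is paracompact Hausdorff (hence locally compact) to choose finitely many charts $(U^\al,\phi^\al)$, $\al=1,\ldots,N$, on $X$ with $U^\al\subseteq\R^n_{k_\al}$, together with open sets $V^\al\Subset\phi^\al(U^\al)$ such that $\ov{V^\al}$ is compact and $S\subseteq\bigcup_{\al=1}^N V^\al$. By \eq{mc2eq2} together with the definition \eq{mc2eq1} of $\pd X$, one has the disjoint decomposition $i_X^{-1}(\phi^\al(U^\al))=\coprod_{i=1}^{k_\al}\phi^\al_i(U^\al_i)$, and on each $\phi^\al_i(U^\al_i)$ the map $i_X$ is a homeomorphism onto the relatively closed subset $\phi^\al(U^\al\cap\{x_i=0\})$ of $\phi^\al(U^\al)$. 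Hence $i_X^{-1}(\ov{V^\al})\cap\phi^\al_i(U^\al_i)$ corresponds under this homeomorphism to a closed subset of the compact set $\ov{V^\al}$ and is therefore compact; summing over $i$ shows $i_X^{-1}(\ov{V^\al})$ is compact. Finally, $i_X^{-1}(S)$ is closed in $\pd X$ (by continuity of $i_X$ and the fact that $S$ is closed in $X$) and contained in the compact set $\bigcup_\al i_X^{-1}(\ov{V^\al})$, so it is compact as a closed subset of a compact set.

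The main obstacle is more organizational than analytic: one must check the disjoint decomposition of $i_X^{-1}(\phi^\al(U^\al))$ by the boundary charts and verify relative closedness of each image $\phi^\al_i(U^\al_i)$ inside $\phi^\al(U^\al)$. Both are direct consequences of the coordinate formula \eq{mc2eq2} and the definition \eq{mc2eq1}.
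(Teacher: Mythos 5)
Your proof is correct and follows exactly the route the paper intends (the paper states only ``by considering the local models $\R^n_k$'' and omits the details, which you have filled in correctly via the chart description \eq{mc2eq2}, the disjoint decomposition of $i_X^{-1}(\phi(U))$, and a finite cover by relatively compact chart pieces). One tiny quibble: paracompact Hausdorff does not by itself imply locally compact --- you should instead cite the charts directly, since open subsets of $\R^n_k$ are locally compact --- but this does not affect the argument.
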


As $\pd X$ is a manifold with corners we can iterate the boundary
construction to obtain $\pd X,\pd^2X,\ldots,\pd^nX$, with $\pd^kX$
an $(n-k)$-manifold with corners.

\begin{prop} Let\/ $X$ be an $n$-manifold with corners. Then for
$k=0,\ldots,n$ there are natural identifications
\e
\begin{split}
\pd^kX\cong\bigl\{(x,\be_1,\ldots,\be_k):\,&\text{$x\in X,$
$\be_1,\ldots,\be_k$ are distinct}\\
&\text{local boundary components for $X$ at $x$}\bigr\}.
\end{split}
\label{mc2eq3}
\e
\label{mc2prop3}
\end{prop}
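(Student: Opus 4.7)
The plan is to prove the identification by induction on $k$, using the chart-level description of $\partial X$ given in \eqref{mc2eq2}. The case $k=0$ is trivial (with the empty tuple), and the case $k=1$ is exactly the definition of $\partial X$ in \eqref{mc2eq1}. For the inductive step, suppose we have the identification for some $k\ge 1$. Then $\pd^{k+1}X=\pd(\pd^kX)$, and by \eqref{mc2eq1} applied to the manifold with corners $\pd^kX$, together with the inductive hypothesis, a point of $\pd^{k+1}X$ consists of data $\bigl((x,\be_1,\ldots,\be_k),\ga\bigr)$ where $(x,\be_1,\ldots,\be_k)$ represents a point of $\pd^kX$ and $\ga$ is a local boundary component of $\pd^kX$ at this point. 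So the inductive step reduces to exhibiting a canonical bijection between such $\ga$ and local boundary components $\be_{k+1}$ of $X$ at $x$ distinct from $\be_1,\ldots,\be_k$.

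To set this up locally, I would pick a chart $(U,\phi)$ on $X$ with $U\subseteq\R^n_m$ open and $\phi(0)=x$, where $m=\depth_Xx$. By the discussion after Definition \ref{mc2def4}, local boundary components of $X$ at $x$ are in bijection with the indices $\{1,\ldots,m\}$; write $\be_j$ as corresponding to index $i_j\in\{1,\ldots,m\}$, and without loss of generality relabel coordinates so that $i_j=j$ for $j=1,\ldots,k$. Now iterate the construction \eqref{mc2eq2}: applying it first with $i=1$ gives a chart on $\pd X$ at $(x,\be_1)$ with model $\R^{n-1}_{m-1}$; applying \eqref{mc2eq2} again with $i=1$ to this chart (now picking out the component labelled by the old coordinate $x_2$) gives a chart on $\pd^2X$ at $(x,\be_1,\be_2)$ with model $\R^{n-2}_{m-2}$, and so on. After $k$ iterations I obtain a chart on $\pd^kX$ at $(x,\be_1,\ldots,\be_k)$ whose model is an open subset of $\R^{n-k}_{m-k}$, whose $m-k$ boundary hyperplanes are in canonical bijection with the coordinates $x_{k+1},\ldots,x_m$ of the original chart $U$.

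Consequently, the local boundary components of $\pd^kX$ at $(x,\be_1,\ldots,\be_k)$ correspond bijectively to indices in $\{k+1,\ldots,m\}$, which are precisely the local boundary components of $X$ at $x$ other than $\be_1,\ldots,\be_k$. This gives the desired map $\ga\mapsto\be_{k+1}$, completing the induction. Finally I would note that this correspondence is independent of the choice of chart, since the chart-level description of local boundary components is itself chart-independent (by the discussion after Definition \ref{mc2def4} and the compatibility of the induced charts \eqref{mc2eq2}), so the bijection \eqref{mc2eq3} is natural.

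The main obstacle is purely bookkeeping: at each iteration of \eqref{mc2eq2} the coordinate indices shift, so one must be careful that the bijection between local boundary components of $\pd^kX$ at $(x,\be_1,\ldots,\be_k)$ and the remaining $\be_{k+1}$ at $x$ is canonical and does not depend on the order in which the boundary operations are taken. Once the chart calculation is set up cleanly, naturality and chart-independence follow from the same argument that established Proposition \ref{mc2prop2}.
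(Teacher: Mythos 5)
Your proposal is correct and follows essentially the same route as the paper: induction on $k$, with the inductive step established by iterating the chart construction \eq{mc2eq2} to identify local boundary components of $\pd^kX$ at $(x,\be_1,\ldots,\be_k)$ with the local boundary components of $X$ at $x$ distinct from $\be_1,\ldots,\be_k$. The paper carries out the $k=2$ case explicitly with general coordinate indices and then sketches the induction, while you relabel coordinates to streamline the index bookkeeping, but the content is the same.
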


\begin{proof} Consider first the case $k=2$. Points of $\pd^2X$ are
of the form $\bigl((x,\be_1),\ti\be_2\bigr)$, where $x\in X$, and
$\be_1$ is a local boundary component of $X$ at $x$, and $\ti\be_2$
is a local boundary component of $\pd X$ at $(x,\be_1)$. Suppose
$(U,\phi)$ is a chart for $X$ with $x=\phi(u)$ for some $u\in U$,
where $U$ is open in $\R^n_l$ for $l\ge 2$, and
$u=(u_1,\ldots,u_n)$, with $u_{i_1}=0$ and $\phi^{-1}(\be_1)$ the
local boundary component $x_{i_1}=0$ in $U$. Then \eq{mc2eq2} gives
a chart $(U_{i_1},\phi_{i_1})$ for $\pd X$ with
$\phi_{i_1}\bigl((u_1,\ldots,u_{i_1-1},u_{i_1+1},\ldots,
u_n)\bigr)=(x,\be_1)$. Thus $\phi_{i_1}^{-1}(\ti\be_2)$ is a local
boundary component for $U_{i_1}$, and so is of the form $x_j=0$ for
$j=1,\ldots,l-1$. Write $i_2=j$ if $j<i_1$ and $i_2=j+1$ if $j\ge
i_1$. Then $u_{i_2}=0$, as $u_{i_2}$ is the $j^{\rm th}$ coordinate
of~$(u_1,\ldots,u_{i_1-1},u_{i_1+1},\ldots,u_n)$.

Let $\be_2$ be the local boundary component $\phi_*(\{x_{i_2}=0\})$
of $X$ at $x$. Then $\be_2\ne \be_1$ as $i_2\ne i_1$. We have
constructed a 1-1 correspondence between local boundary components
$\ti\be_2$ of $\pd X$ at $(x,\be_1)$ and local boundary components
$\be_2$ of $X$ at $x$ with $\be_2\ne\be_1$. This 1-1 correspondence
is independent of the choice of chart $(U,\phi)$. Identifying
$\bigl((x,\be_1),\ti\be_2\bigr)$ with $(x,\be_1,\be_2)$ gives
\eq{mc2eq3} for~$k=2$.

We prove the general case by induction on $k$. The case $k=0$ is
trivial, $k=1$ is \eq{mc2eq1}, and $k=2$ we have proved above. For
the inductive step, having proved \eq{mc2eq3} for $k\le l<n$, we
show that at a point of $\pd^lX$ identified with
$(p,\be_1,\ldots,\be_l)$ under \eq{mc2eq3}, local boundary
components of $\pd^lX$ are in 1-1 correspondence with local boundary
components $\be_{l+1}$ of $X$ at $p$ distinct
from~$\be_1,\ldots,\be_l$.
\end{proof}

\begin{dfn} Write $S_k$ for the symmetric group on $k$ elements,
the group of bijections $\si:\{1,\ldots,k\}\ra\{1,\ldots,k\}$. From
\eq{mc2eq3} we see that $\pd^kX$ has a natural, free action of $S_k$
by permuting $\be_1,\ldots,\be_k$, given by
\begin{equation*}
\si:(x,\be_1,\ldots,\be_k)\longmapsto
(x,\be_{\si(1)},\ldots,\be_{\si(k)}).
\end{equation*}
Each $\si\in S_k$ acts on $\pd^kX$ as an isomorphism of
$(n-k)$-manifolds with corners (a diffeomorphism). Thus, if $G$ is a
subgroup of $S_k$, then the quotient $(\pd^kX)/G$ is also an
$(n-k)$-manifold with corners.

In particular, taking $G=S_k$, we define the $k$-{\it corners\/}
$C_k(X)$ of $X$ to be
\e
\begin{aligned}
C_k(X)=\bigl\{(x,&\{\be_1,\ldots,\be_k\}):\text{$x\in X,$
$\be_1,\ldots,\be_k$ are distinct}\\
&\text{local boundary components for $X$ at $x$}\bigr\}\cong
\pd^kX/S_k,
\end{aligned}
\label{mc2eq4}
\e
an $(n-k)$-manifold with corners. Note that $\be_1,\ldots,\be_k$ are
unordered in \eq{mc2eq4}, but ordered in \eq{mc2eq3}. We have
isomorphisms $C_0(X)\cong X$ and~$C_1(X)\cong\pd X$.
\label{mc2def6}
\end{dfn}

\begin{rem} We review how our definitions so far relate to those in
use by other authors. For manifolds without or with boundary, all
definitions the author has found (see Kobayashi and Nomizu
\cite{KoNo} or Lang \cite{Lang}, for instance) are equivalent to
Definition \ref{mc2def1}. However, for manifolds with corners, there
are four main {\it inequivalent\/} definitions. Our terminology for
(a),(b),(d) follows J\"anich \cite[\S 1.1]{Jani}, and for (c)
follows Monthubert~\cite[\S 2.2]{Mont}.
\begin{itemize}
\setlength{\itemsep}{0pt}
\setlength{\parsep}{0pt}
\item[(a)] {\it Manifolds with corners\/} are as in
Definition~\ref{mc2def1}.
\item[(b)] A manifold with corners $X$ is called a {\it manifold
with faces\/} if each $x\in X$ lies in the image of
$\depth_Xx=\bmd{i_X^{-1}(x)}$ different connected components of
$\pd X$ under~$i_X:\pd X\ra X$.
\item[(c)] A manifold with corners $X$ is called a {\it manifold
with embedded corners\/} if there exists a decomposition $\pd
X=\pd_1X\amalg \pd_2 X\amalg \cdots\amalg\pd_NX$ for finite $N\ge 0$
with $\pd_iX$ open and closed in $\pd X$, such that
$i_X\vert_{\pd_iX}:\pd_iX\ra X$ is injective for $i=1,\ldots,N$. We
allow~$\pd_iX=\es$.
\item[(d)] For each integer $N\ge 0$, an $\an{N}$-{\it manifold\/}
is a manifold with corners $X$ together with a given decomposition
$\pd X=\pd_1X\amalg\cdots\amalg\pd_NX$ with $\pd_iX$ open and closed
in $\pd X$, such that $i_X\vert_{\pd_iX}:\pd_iX\ra X$ is injective
for $i=1,\ldots,N$. We allow $\pd_iX=\es$. Note that $N$ has no
relation to $\dim X$. A $\an{0}$-manifold is a manifold without
boundary, and a $\an{1}$-manifold is a manifold with boundary.
\end{itemize}

Note that (c) implies (b) implies (a), and (d) becomes (c) after
forgetting the decomposition $\pd X=\pd_1X\amalg\cdots\amalg
\pd_NX$. An example satisfying (a) but not (b)--(d) is the {\it
teardrop\/} $T=\bigl\{(x,y)\in\R^2:x\ge 0$, $y^2\le x^2-x^4\bigr\}$,
shown in Figure \ref{fig1}. For compact $X$ (b) also implies (c),
but one can find pathological examples of noncompact $X$ which
satisfy (b), but not (c) or (d). Cerf \cite{Cerf}, Douady
\cite{Doua}, Margalef-Roig and Outerelo Dominguez \cite{MaOu}, and
others define manifolds with corners as in (a). Melrose
\cite{Melr1,Melr2} and authors who follow him define manifolds with
corners as in (c); Melrose \cite[\S 1.6]{Melr2} calls manifolds with
corners in sense (a) {\it t-manifolds}. J\"anich \cite[\S 1.1]{Jani}
defines manifolds with corners in senses (a),(b) and (d), but is
primarily interested in (d). Laures \cite{Laur} also works with
$\an{N}$-manifolds, in sense~(d).
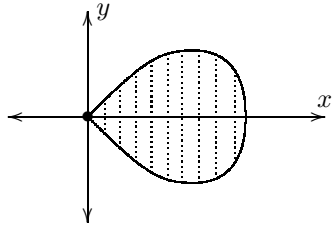
\begin{figure}[htb]
\begin{xy}
,(-1.5,0)*{}
,<6cm,-1.5cm>;<6.7cm,-1.5cm>:
,(3,.3)*{x}
,(-1.2,2)*{y}
,(-1.5,0)*{\bullet}
,(-1.5,0); (1.5,0) **\crv{(-.5,1)&(.1,1.4)&(1.5,1.2)}
?(.06)="a"
?(.12)="b"
?(.2)="c"
?(.29)="d"
?(.4)="e"
?(.5)="f"
?(.6)="g"
?(.7)="h"
?(.83)="i"
,(-1.5,0); (1.5,0) **\crv{(-.5,-1)&(.1,-1.4)&(1.5,-1.2)}
?(.06)="j"
?(.12)="k"
?(.2)="l"
?(.29)="m"
?(.4)="n"
?(.5)="o"
?(.6)="p"
?(.7)="q"
?(.83)="r"
,"a";"j"**@{.}
,"b";"k"**@{.}
,"c";"l"**@{.}
,"d";"m"**@{.}
,"e";"n"**@{.}
,"f";"o"**@{.}
,"g";"p"**@{.}
,"h";"q"**@{.}
,"i";"r"**@{.}
\ar (-1.5,0);(3,0)
\ar (-1.5,0);(-3,0)
\ar (-1.5,0);(-1.5,2)
\ar (-1.5,0);(-1.5,-2)
\end{xy}
\caption{The teardrop, a 2-manifold with corners of type (a)}
\label{fig1}
\end{figure}

The {\it boundary\/} $\pd X$ of a manifold with corners $X$ is also
defined in different ways in the literature. In our picture, $\pd X$
is a manifold with corners, with an immersion $i_X:\pd X\ra X$ which
is not necessarily injective, so that $\pd X$ may not be a subset of
$X$. This follows Douady \cite[\S 6]{Doua}, who defines $\pd^kX$ (in
his notation) to be equivalent to our $C_k(X)$ in \eq{mc2eq4}, so
that his $\pd^1X$ agrees with our $\pd X$. All the other authors
cited define $\pd X$ to be $i_X(\pd X)$ in our notation, so that
$\pd X$ is a subset of $X$, but is not necessarily a manifold with
corners. But in (c),(d) above, the $\pd_iX$ are both subsets of $X$
and manifolds with corners.
\label{mc2rem}
\end{rem}

If $X,Y$ are manifolds with corners of dimensions $m,n$, there is a
natural way to make the product $X\t Y$ into a manifold with
corners, of dimension $m+n$. The following result on boundary and
$k$-corners of $X\t Y$ is easy to prove by considering local models
$\R^{m+n}_{a+b}\cong \R^m_a\t\R^n_b$ for~$X\t Y$.

\begin{prop} Let\/ $X,Y$ be manifolds with corners. Then there are
natural isomorphisms of manifolds with corners
\ea
\pd(X\t Y)&\cong (\pd X\t Y)\amalg (X\t\pd Y),
\label{mc2eq5}\\
C_k(X\t Y)&\cong \ts\coprod_{i,j\ge 0,\; i+j=k}C_i(X)\t C_j(Y).
\label{mc2eq6}
\ea
\label{mc2prop4}
\end{prop}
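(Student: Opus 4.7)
The plan is to reduce both isomorphisms to a statement about local boundary components, since by Definitions \ref{mc2def5} and \ref{mc2def6} points of $\pd^kX$ and $C_k(X)$ are encoded entirely in terms of these. Concretely, I will establish a natural bijection between local boundary components of $X\t Y$ at $(x,y)$ and the disjoint union of those of $X$ at $x$ with those of $Y$ at $y$, and then match smooth structures via the charts defined in \eq{mc2eq2}.

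First I would pick charts $(U,\phi)$ on $X$ near $x$ with $U\subseteq\R^m_a$ and $(V,\psi)$ on $Y$ near $y$ with $V\subseteq\R^n_b$, and take the product chart $(U\t V,\phi\t\psi)$ on $X\t Y$ under the identification $\R^m_a\t\R^n_b\cong\R^{m+n}_{a+b}$. By the coordinate description of local boundary components given after Definition \ref{mc2def4}, local boundary components at $(x,y)$ correspond bijectively to indices $i\in\{1,\ldots,a+b\}$ whose $i$-th coordinate vanishes at the preimage of $(x,y)$. Indices with $1\le i\le a$ exactly parametrise local boundary components of $X$ at $x$, and indices with $a+1\le i\le a+b$ exactly parametrise those of $Y$ at $y$; these two families are disjoint. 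This yields the bijection \eq{mc2eq5} as sets, and also its decomposition into two pieces.

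Second I would check that this bijection is a diffeomorphism of manifolds with corners. Applying \eq{mc2eq2} to the product chart at an index $i\le a$ produces a chart on $\pd(X\t Y)$ whose coordinate domain is $U_i\t V$ and whose map is $\phi_i\t\psi$, which is precisely the product of the chart $(U_i,\phi_i)$ on $\pd X$ with $(V,\psi)$ on $Y$. The symmetric statement holds for indices $i>a$, producing charts on $X\t\pd Y$. Since these charts cover each side and are pairwise compatible, the two maximal atlases agree, proving \eq{mc2eq5} as manifolds with corners.

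For \eq{mc2eq6}, I would use Proposition \ref{mc2prop3}: a point of $C_k(X\t Y)$ is $(x,y)$ together with an unordered set $\{\be_1,\ldots,\be_k\}$ of distinct local boundary components at $(x,y)$. By the first step, each such set splits uniquely into an $i$-element subset coming from $X$ at $x$ and a $j$-element subset coming from $Y$ at $y$ with $i+j=k$, which is exactly the data of a point of $C_i(X)\t C_j(Y)$. Summing over $i+j=k$ gives the set-level identification; the diffeomorphism follows either by iterating \eq{mc2eq5} and observing that the $S_k$-action of Definition \ref{mc2def6} permutes the unordered components without mixing the two factors, or equivalently by the same chart-compatibility argument as above. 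The main obstacle is purely notational: keeping careful track of indices when verifying that \eq{mc2eq2} applied to the product chart genuinely reproduces the product chart on $\pd X\t Y$ or $X\t\pd Y$. Once the boundary case is nailed down, the corners formula is a combinatorial unpacking with no further geometric content.
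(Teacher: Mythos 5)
Your proposal is correct and follows exactly the route the paper indicates: it proves the result "by considering local models $\R^{m+n}_{a+b}\cong\R^m_a\t\R^n_b$", identifying local boundary components of $X\t Y$ at $(x,y)$ with the disjoint union of those of $X$ at $x$ and of $Y$ at $y$, and checking chart compatibility via \eq{mc2eq2}. The paper leaves these details as an exercise, and your write-up supplies them accurately, including the combinatorial unpacking for \eq{mc2eq6} via Proposition \ref{mc2prop3}.
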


Note that \eq{mc2eq5} and \eq{mc2eq6} imply that
\ea
\pd^k(X\t Y)&\cong\ts \coprod_{i=1}^k\binom{k}{i}\,
\pd^iX\t\pd^{k-i}Y,
\label{mc2eq7}\\
\ts\coprod_{k=0}^{\dim X\t Y}C_k(X\t Y)&\cong
\ts\bigl[\coprod_{i=0}^{\dim X}C_i(X)
\bigr]\t\bigl[\coprod_{j=0}^{\dim Y}C_j(Y)\bigr].
\label{mc2eq8}
\ea
We will see in \S\ref{mc4} that if $X$ is a manifold with corners
then we can make $\coprod_{i=0}^{\dim X}C_i(X)$ behave functorially
under smooth maps.

The map $X\mapsto C_k(X)$ commutes with boundaries. The proof is
again an easy exercise by considering local models $\R^m_a$ for~$X$.

\begin{prop} Let\/ $X$ be a manifold with corners and\/ $k\ge 0$. Then
there are natural identifications, with the first a diffeomorphism:
\e
\begin{split}
\pd\bigl(C_k(X)\bigr)\cong C_k(\pd X)\cong
\bigl\{(x,\be_1,\{\be_2,\ldots,\be_{k+1}\}):\text{$x\in X,$
$\be_1,\ldots,\be_{k+1}$}&\\
\text{are distinct local boundary components for\/ $X$ at\/
$x$}&\bigr\}.
\end{split}
\label{mc2eq9}
\e
\label{mc2prop5}
\end{prop}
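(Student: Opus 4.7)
The plan is to identify both $\pd(C_k(X))$ and $C_k(\pd X)$ with the common quotient $\pd^{k+1}X/S_k$, where $S_k$ acts freely on $\pd^{k+1}X$ by permuting $k$ of the local boundary components while fixing one, and then match this quotient with the set on the right-hand side of \eq{mc2eq9}.

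First I would describe $C_k(\pd X)$ set-theoretically. By Proposition \ref{mc2prop2} the boundary $\pd X$ is an $(n-1)$-manifold with corners, so Proposition \ref{mc2prop3} applies to it. By the argument in the proof of Proposition \ref{mc2prop3}, local boundary components of $\pd X$ at a point $(x,\be_1)$ are in natural bijection with local boundary components $\be$ of $X$ at $x$ with $\be\ne\be_1$. Iterating gives $\pd^k(\pd X)\cong\pd^{k+1}X$, with points written as $(x,\be_1,\be_2,\ldots,\be_{k+1})$ for $\be_1,\ldots,\be_{k+1}$ distinct local boundary components of $X$ at $x$. Definition \ref{mc2def6} then identifies $C_k(\pd X)$ with the quotient of $\pd^k(\pd X)$ by the $S_k$-action on its $k$ local boundary component slots; under the bijection this is the $S_k$-action permuting $\be_2,\ldots,\be_{k+1}$ and fixing $\be_1$. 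This yields precisely the RHS of \eq{mc2eq9}.

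Next I would identify $\pd(C_k(X))$ with the same quotient. Because the $S_k$-action on $\pd^kX$ permuting $\be_1,\ldots,\be_k$ is free and acts by diffeomorphisms of manifolds with corners (Definition \ref{mc2def6}), the quotient map $q:\pd^kX\to C_k(X)=\pd^kX/S_k$ is a local diffeomorphism, so $\pd$ commutes with $q$. Thus $\pd(C_k(X))\cong\pd(\pd^kX)/S_k=\pd^{k+1}X/S_k$, where the $S_k$-action on $\pd^{k+1}X$ permutes the first $k$ components $\be_1,\ldots,\be_k$ of a point $(x,\be_1,\ldots,\be_{k+1})$ and fixes $\be_{k+1}$ (the local boundary component arising from the outer $\pd$), by naturality of the boundary functor. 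Relabelling $\be_{k+1}$ as the distinguished local boundary component reproduces the set on the RHS of \eq{mc2eq9}, and composing the two identifications gives the first, diffeomorphism identification $\pd(C_k(X))\cong C_k(\pd X)$.

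The main obstacle is justifying that the boundary construction commutes with the free $S_k$-quotient and that the resulting smooth structure agrees with the one coming from $C_k(\pd X)$. This reduces to a chart computation: choose a chart $(U,\phi)$ on $X$ with $U\subseteq\R^n_l$ open and $l\ge k+1$, and apply \eq{mc2eq2} iteratively to produce compatible charts on $\pd(C_k(X))$ and $C_k(\pd X)$. Locally both sides are quotients of open subsets of $\R^{n-k-1}_{l-k-1}\t\{0\}^{k+1}\subseteq\R^n_l$ by the evident $S_k$-permutation of $k$ of the zero coordinates, with the $(k+1)$st zero coordinate distinguished. Matching the two orders of composition of $\pd$ and $S_k$-quotient is routine bookkeeping and confirms the first identification is a diffeomorphism.
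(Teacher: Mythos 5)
Your proposal is correct and is essentially the argument the paper intends: the paper dismisses this as ``an easy exercise by considering local models $\R^m_a$ for $X$'', and your identification of both sides with $\pd^{k+1}X/S_k$ via Proposition \ref{mc2prop3} and Definition \ref{mc2def6}, verified in charts built by iterating \eq{mc2eq2}, is exactly that exercise carried out with the bookkeeping made explicit.
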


The next definition will be used in defining smooth maps
in~\S\ref{mc3}.

\begin{dfn} Let $X$ be a manifold with corners, and $(x,\be)\in\pd
X$. A {\it boundary defining function for\/ $X$ at\/} $(x,\be)$ is a
pair $(V,b)$, where $V$ is an open neighbourhood of $x$ in $X$ and
$b:V\ra[0,\iy)$ is a map, such that $b:V\ra\R$ is smooth in the
sense of Definition \ref{mc2def2}, and $\d b\vert_x:T_xV\ra
T_0[0,\iy)$ is nonzero, and there exists an open neighbourhood $\ti
V$ of $(x,\be)$ in $i_X^{-1}(V)\subseteq\pd X$, with $b\ci
i_X\vert_{\ti V}\equiv 0$, and $i_X\vert_{\ti V}:\ti
V\longra\bigl\{x'\in V:b(x')=0\bigr\}$ is a homeomorphism between
$\ti V$ and an open subset of~$\bigl\{x'\in V:b(x')=0\bigr\}$.

Thus the boundary $\pd X$ is defined near $(x,\be)$ by the equation
$b=0$ in $X$ near $x$. Using the ideas on fibre products of
manifolds with corners in \S\ref{mc6}, one can say more: $\pd X$
near $(x,\be)$ is naturally isomorphic to the fibre product of
manifolds with corners $V\t_{b,[0,\iy),i}\{0\}$ near $(x,0)$, where
$i:\{0\}\ra[0,\iy)$ is the inclusion, and the fibre product exists
near~$(x,0)$.
\label{mc2def7}
\end{dfn}

Here are some properties of such $(V,b)$. The proofs are elementary.

\begin{prop} Let\/ $X$ be an $n$-manifold with corners,
and\/~$(x,\be)\in\pd X$.
\begin{itemize}
\setlength{\itemsep}{0pt}
\setlength{\parsep}{0pt}
\item[{\bf(a)}] There exists a boundary defining function\/
$(V,b)$ for\/ $X$ at\/ $(x,\be)$.
\item[{\bf(b)}] Let\/ $(V,b)$ and\/ $(V',b')$ be boundary defining
functions for $X$ at\/ $(x,\be)$. Then there exists an open
neighbourhood\/ $V''$ of\/ $x$ in $V\cap V'$ and a smooth function
$g:V''\ra(0,\iy)\subset\R$ such that\/ $b'\vert_{V''}\equiv
b\vert_{V''}\cdot g$.
\item[{\bf(c)}] Let\/ $(V,b)$ be a boundary defining function for
$X$ at\/ $(x,\be)$. Then there exists a chart\/ $(U,\phi)$ on
the manifold\/ $X,$ such that\/ $U$ is open in\/ $\R^n_k$ for\/
$0<k\le n$ and\/ $0\in U$ with\/ $\phi(0)=x,$ and\/ $\be$ is the
image of the local boundary component\/ $x_1=0$ of\/ $U$ at\/
$0,$ and\/ $\phi(U)\subseteq V,$ and\/ $b\ci\phi\equiv
x_1:U\ra[0,\iy)$.
\item[{\bf(d)}] Let\/ $(U,\phi)$ be a chart on the
manifold\/ $X,$ such that\/ $U$ is open in\/ $\R^n_k$ and\/ $u\in U$
with\/ $\phi(u)=x,$ and\/ $\be$ is the image of the local boundary
component $x_i=0$ of\/ $U$ at\/ $u$ for $i\le k$. Then
$\bigl(\phi(U),x_i\ci\phi^{-1}\bigr)$ is a boundary defining
function for $X$ at\/~$(x,\be)$.
\end{itemize}
\label{mc2prop6}
\end{prop}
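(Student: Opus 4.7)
The plan is to prove (d) directly from the chart structure, deduce (a), prove (c) by a Hadamard-type coordinate straightening, and derive (b) by reducing both defining functions to the normalized chart supplied by (c).

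First, for (d) and (a), given the chart $(U,\phi)$ with $\phi(u)=x$, $U\subseteq\R^n_k$ open, and $\be$ the local boundary component $x_i=0$ at $u$, set $V=\phi(U)$ and $b=x_i\ci\phi^{-1}:V\ra[0,\iy)$. Smoothness and $\d b\vert_x\ne 0$ are immediate. The boundary chart $(U_i,\phi_i)$ of \eq{mc2eq2} provides a neighbourhood $\ti V$ of $(x,\be)$ in $i_X^{-1}(V)\subseteq\pd X$ on which $i_X\vert_{\ti V}$ is a homeomorphism onto an open subset of $\{x'\in V:b(x')=0\}$ with $b\ci i_X\vert_{\ti V}\equiv 0$, proving (d). For (a), the discussion following Definition \ref{mc2def4} shows that every $(x,\be)\in\pd X$ arises from some chart in which $\be$ corresponds to a coordinate boundary face $x_i=0$, so (d) applies.

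Next, for (c), choose a chart $(U',\phi')$ with $\phi'(0)=x$, $U'\subseteq\R^n_k$, and $\be$ corresponding to $\{x_1=0\}$, and set $B=b\ci\phi'$. By Definition \ref{mc2def1}, $B$ extends to a smooth $\ti B$ on an open $A\subseteq\R^n$ containing $0$, which after shrinking $A$ we may take to vanish on $A\cap\{x_1=0\}$ (using that $b$ vanishes on $i_X(\ti V)$, which locally matches $\phi'(U'\cap\{x_1=0\})$). Hadamard's lemma in the variable $x_1$ then gives $\ti B(x)=x_1\cdot \ti H(x)$ for smooth $\ti H$ near $0$. The condition $\d b\vert_x\ne 0$ combined with $B\ge 0$ on $U'$ forces $\ti H(0)>0$, so $h:=\ti H\vert_{U'}$ is smooth and positive near $0$. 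The map $\psi(x_1,\ldots,x_n)=(x_1 h(x),x_2,\ldots,x_n)$ has invertible Jacobian at $0$ and satisfies $y_1\ge 0\Leftrightarrow x_1\ge 0$, so is a diffeomorphism between neighbourhoods of $0$ in $\R^n_k$. Shrinking $U\subseteq U'$ so that $\phi'(U)\subseteq V$ and $\psi$ is a diffeomorphism on $U$, and setting $\phi=\phi'\ci\psi^{-1}$, yields a chart with $b\ci\phi\equiv x_1$.

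Finally for (b), apply (c) to $(V,b)$ to obtain a chart $(U,\phi)$ with $b\ci\phi\equiv x_1$. Repeating the Hadamard factorization with $b'\ci\phi$ in place of $B$ yields smooth $\ti h>0$ near $0$ with $b'\ci\phi=x_1\cdot \ti h$; pushing forward, $g:=\ti h\ci\phi^{-1}$ is smooth and positive on a neighbourhood $V''\subseteq V\cap V'$ of $x$ and satisfies $b'\vert_{V''}=b\vert_{V''}\cdot g$. The main subtlety throughout is the bookkeeping around smooth extensions to $\R^n$: one must confirm that the Hadamard factor $\ti H$ inherits positivity at the corner from the sign constraint $B\ge 0$ on $\{x_1\ge 0\}$, and that the resulting $g$ depends only on $b,b'$ and not on the extension chosen. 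These are ensured by the intrinsic identity $\ti H(0)\,\d x_1=\d \ti B\vert_0\ne 0$ and the uniqueness of the factorization modulo functions vanishing on $\{x_1=0\}$.
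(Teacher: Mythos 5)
The paper gives no proof of Proposition \ref{mc2prop6}, remarking only that the proofs are elementary, and your argument is correct and is surely the intended one: (d) follows directly from the boundary charts \eq{mc2eq2}, (a) from (d), and (c) and (b) from Hadamard's lemma in the $x_1$-variable after normalising coordinates. The one genuinely delicate point --- that the smooth extension of $b\ci\phi'$ to an open set in $\R^n$ is a priori known to vanish only on $\{x_1=0\}\cap\R^n_k$ and must be corrected (e.g.\ by subtracting its restriction to $\{x_1=0\}$, which leaves its values on $U'$ unchanged) before factoring out $x_1$ --- is exactly the subtlety you flag at the end, and your handling of it, together with deducing positivity of the Hadamard factor from $\d b\vert_x\ne 0$ and $b\ge 0$, is sound.
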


\section{Smooth maps of manifolds with corners}
\label{mc3}

Here is our definition of {\it smooth maps\/} $f:X\ra Y$ of
manifolds with corners~$X,Y$.

\begin{dfn} Let $X,Y$ be manifolds with corners of dimensions
$m,n$. A continuous map $f:X\ra Y$ is called {\it weakly smooth\/}
if whenever $(U,\phi),(V,\psi)$ are charts on the manifolds $X,Y$
then
\begin{equation*}
\psi^{-1}\ci f\ci\phi:(f\ci\phi)^{-1}(\psi(V))\longra V
\end{equation*}
is a smooth map from $(f\ci\phi)^{-1}(\psi(V))\subset\R^m$ to
$V\subset\R^n$, where smooth maps between subsets of $\R^m,\R^n$ are
defined in Definition~\ref{mc2def1}.

A weakly smooth map $f:X\ra Y$ is called {\it smooth\/} if it
satisfies the following additional condition over $\pd X,\pd Y$.
Suppose $x\in X$ with $f(x)=y\in Y$, and $\be$ is a local boundary
component of $Y$ at $y$. Let $(V,b)$ be a boundary defining function
for $Y$ at $(y,\be)$. Then $f^{-1}(V)$ is an open neighbourhood of
$x$ in $X$, and $b\ci f:f^{-1}(V)\ra[0,\iy)$ is a weakly smooth map.
We require that either $b\ci f\equiv 0$ on an open neighbourhood of
$x$ in $f^{-1}(V)$, or $(f^{-1}(V),b\ci f)$ is a boundary defining
function for $X$ at $(x,\ti\be)$, for some unique local boundary
component $\ti\be$ of $X$ at~$x$.
\label{mc3def1}
\end{dfn}

We also define five special classes of smooth maps:

\begin{dfn} Let $X,Y$ be manifolds with corners of dimensions $m,n$,
and $f:X\ra Y$ a weakly smooth map. If $x\in X$ with $f(x)=y$ then
in the usual way there is an induced linear map on tangent spaces
$\d f\vert_x:T_xX\ra T_yY$. In the notation of Definition
\ref{mc2def2}, $\d f\vert_x:T_xX\ra T_yY$ maps $IS(T_xX)\ra
IS(T_yY)$, that is, $\d f\vert_x$ maps inward-pointing vectors to
inward-pointing vectors.

Let $x\in S^k(X)$ and $y\in S^l(Y)$. Then the inclusion
$T_x(S^k(X))\subseteq IS(T_xX)\subseteq T_xX$ is modelled on
$\{0\}\t\R^{n-k}\subseteq [0,\iy)^k\t\R^{n-k}\subseteq \R^n$. Hence
$T_x(S^k(X))=IS(T_xX)\cap -IS(T_xX)$, and similarly
$T_y(S^l(Y))=IS(T_yY)\cap -IS(T_yY)$. Since $\d f\vert_x$ maps
$IS(T_xX)\ra IS(T_yY)$ it maps $IS(T_xX)\cap -IS(T_xX)\ra IS(T_yY)
\cap -IS(T_yY)$, that is, $\d f\vert_x$ maps $T_x(S^k(X))\ra
T_y(S^l(Y))$. Hence there is an induced linear map
\e
(\d f\vert_x)_*:T_xX/T_x(S^k(X))\longra T_yY/T_y(S^l(Y)).
\label{mc3eq1}
\e

Now let $f:X\ra Y$ be a smooth map.
\begin{itemize}
\setlength{\itemsep}{0pt}
\setlength{\parsep}{0pt}
\item[(i)] We call $f$ a {\it diffeomorphism\/} if $f$ has a smooth
inverse $f^{-1}:Y\ra X$.
\item[(ii)] We call $f$ an {\it immersion\/} if $\d f\vert_x:T_xX\ra
T_{f(x)}Y$ is injective for all~$x\in X$.
\item[(iii)] We call $f$ an {\it embedding\/} if it is an injective
immersion.
\item[(iv)] We call $f$ a {\it submersion\/} if $\d f\vert_x:T_xX\ra
T_{f(x)}Y$ and $\d f\vert_x:T_x(S^k(X))\ra T_{f(x)}(S^l(Y))$ are
surjective for all $x\in X$, where $x\in S^k(X)$, $f(x)\in S^l(Y)$.
\item[(v)] We call $f$ {\it boundary-submersive}, or {\it
b-submersive}, if $(\d f\vert_x)_*$ in \eq{mc3eq1} is surjective
for all $x\in X$. Note that $\d f\vert_x$ surjective implies
$(\d f\vert_x)_*$ surjective, so submersions are automatically
b-submersive.
\end{itemize}
\label{mc3def2}
\end{dfn}

Here is how Definition \ref{mc3def1} relates to other definitions in
the literature:

\begin{rem} Weakly smooth maps $f:X\ra Y$ are just the obvious
generalization of the usual definition \cite[\S I.1]{KoNo} of smooth
maps for manifolds without boundary. If $\pd Y=\es$ the additional
condition in Definition \ref{mc3def1} is vacuous, and weakly smooth
maps are smooth. Note that the definition of smooth maps $f:X\ra\R$
in Definition \ref{mc2def2} is equivalent to Definition
\ref{mc3def1} when~$Y=\R$.

Our definition of smooth maps between manifolds with corners is not
equivalent to any other definition that the author has found in the
literature, though it is related. Most authors, such as Cerf
\cite[\S I.1.2]{Cerf}, define smooth maps of manifolds with corners
to be weakly smooth maps, in our notation. But there are also two
more complex definitions. Firstly, Monthubert \cite[Def.~2.8]{Mont}
defines {\it morphisms of manifolds with corners\/} $f:X\ra Y$. One
can show that these are equivalent to {\it b-submersive smooth
maps}, in our notation. We prefer our definition, as b-submersive
smooth maps do not have all the properties we want. In particular,
Theorem \ref{mc3thm}(iv),(vi) below fail for b-submersive smooth
maps.

Secondly, Melrose \cite[\S 1.12]{Melr2} defines {\it b-maps\/}
between manifolds with corners. Let $f:X\ra Y$ be a weakly smooth
map. We call $f$ a {\it b-map\/} if the following holds. Let $x\in
X$ with $f(x)=y$, and let the local boundary components of $X$ at
$x$ be $\ti\be_1,\ldots,\ti\be_k$, and of $Y$ at $y$ be
$\be_1,\ldots,\be_l$. Suppose $(\ti V_i,\ti b_i)$ is a boundary
defining function for $X$ at $(x,\ti\be_i)$, $i=1,\ldots,k$, and
$(V_j,b_j)$ a boundary defining function for $Y$ at $(y,\be_j)$,
$j=1,\ldots,l$. Then for all $j=1,\ldots,l$ either $b_j\ci f$ should
be zero near $x$ in $X$, or there should exist
$e_{1j},\ldots,e_{kj}\in\N$ such that $b_j\ci f\equiv
G_j\cdot\prod_{i=1}^k\ti b{}_i^{e_{ij}}$ near $x$ in $X$ for smooth
$G_j>0$. Thus, a smooth map in the sense of Definition \ref{mc3def1}
is exactly a b-map $f:X\ra Y$ such that for all such $x,y$ and
$j=1,\ldots,l$, one of $e_{1j},\ldots,e_{kj}$ is 1 and the rest are
zero. So our smooth maps are a special class of Melrose's b-maps.
\label{mc3rem}
\end{rem}

Here are some properties of smooth maps. The proofs are elementary.

\begin{thm} Let\/ $W,X,Y,Z$ be manifolds with corners.
\begin{itemize}
\setlength{\itemsep}{0pt}
\setlength{\parsep}{0pt}
\item[{\rm(i)}] If\/ $f:X\ra Y$ and\/ $g:Y\ra Z$ are smooth then\/
$g\ci f:X\ra Z$ is smooth.
\item[{\rm(ii)}] The identity map\/ $\id_X:X\ra X$ is smooth.
\item[{\rm(iii)}] Diffeomorphisms\/ $f:X\ra Y$ are equivalent to
isomorphisms of smooth manifolds, that is, to homeomorphisms of
topological spaces $f:X\ra Y$ which identify the maximal atlases on
$X$ and~$Y$.
\item[{\rm(iv)}] The map $i_X:\pd X\ra X$ in Definition\/
{\rm\ref{mc2def5}} is a smooth immersion.
\item[{\rm(v)}] If\/ $f:W\ra Y$ and\/ $g:X\ra Z$ are smooth, the
\begin{bfseries}product\end{bfseries} $f\t g:W\t X\ra Y\t Z$ given
by $(f\t g)(w,x)=\bigl(f(w),g(x)\bigr)$ is smooth.
\item[{\rm(vi)}] If\/ $f:X\ra Y$ and\/ $g:X\ra Z$ are smooth, the
\begin{bfseries}direct product\end{bfseries} $(f,g):X\ra Y\t Z$ given
by $(f,g)(x)=\bigl(f(x),g(x)\bigr)$ is smooth.
\item[{\rm(vii)}] Regarding the empty set\/ $\es$ as a manifold and
the point\/ $\{0\}$ as a $0$-manifold, the unique maps\/
$\es:\es\ra X$ and\/ $\pi:X\ra\{0\}$ are smooth.
\end{itemize}
\label{mc3thm}
\end{thm}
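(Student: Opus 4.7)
The approach is to verify each clause by combining the elementary facts that weak smoothness is preserved under the usual operations on smooth maps between open subsets of Euclidean spaces, with a careful inspection of the boundary condition from Definition \ref{mc3def1}. In every case, the weakly-smooth part is essentially automatic from the classical statements (read off in charts), so the heart of the argument is always checking the extra clause about boundary defining functions. Throughout I would use Proposition \ref{mc2prop6}(a),(c) to select convenient boundary defining functions whenever needed.

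For (i), suppose $f:X\to Y$ and $g:Y\to Z$ are smooth, pick $x\in X$, let $y=f(x)$, $z=g(y)$, and let $\be$ be a local boundary component of $Z$ at $z$ with boundary defining function $(V,c)$. Smoothness of $g$ at $y$ for the pair $(V,c)$ gives two cases: either $c\circ g\equiv 0$ near $y$, in which case $c\circ g\circ f\equiv 0$ near $x$ and we are done, or $(g^{-1}(V),c\circ g)$ is a boundary defining function for $Y$ at $(y,\ti\be)$ for some unique $\ti\be$; in the latter case smoothness of $f$ at $x$ applied to this boundary defining function yields the required dichotomy for $c\circ g\ci f$. Clause (ii) is immediate, and (vii) is vacuous because $\pd Y=\pd\{0\}=\es$ so the boundary condition is empty. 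Clause (iii) is essentially a tautology: diffeomorphism means a smooth homeomorphism with smooth inverse, and such a map must identify the maximal atlases because smoothness is preserved by composition with chart maps in both directions.

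For (iv), work in a chart $(U,\phi)$ on $X$ with $U\subseteq\R^n_k$; Definition \ref{mc2def5} and \eq{mc2eq2} show that in the induced chart $(U_i,\phi_i)$ on $\pd X$ the map $i_X$ is the linear inclusion $(x_1,\dots,x_{n-1})\mapsto (x_1,\dots,x_{i-1},0,x_i,\dots,x_{n-1})$, hence weakly smooth, with injective differential, giving the immersion property. For the boundary clause, take $(V,b)$ a boundary defining function for $X$ at any $(x,\be)$; the above local description identifies $b\ci i_X$ near a point of $\pd X$ either with the function $x_j\ci\phi^{-1}\ci i_X$ for some coordinate $x_j$, which by Proposition \ref{mc2prop6}(d) is a boundary defining function for $\pd X$, or with the zero function when the local boundary component of $\pd X$ lies inside $\{b=0\}$.

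Clauses (v) and (vi) are handled uniformly using Proposition \ref{mc2prop4}: the boundary of $Y\t Z$ near a point $(y,z)$ is a disjoint union of pieces coming from $\pd Y\t Z$ and $Y\t\pd Z$, and correspondingly every local boundary component of $Y\t Z$ at $(y,z)$ is either pulled back from a local boundary component $\be_Y$ of $Y$ at $y$ or from one $\be_Z$ of $Z$ at $z$. If $(V_Y,b_Y)$ is a boundary defining function of $Y$ at $(y,\be_Y)$, then $(V_Y\t Z,b_Y\ci\pi_Y)$ is a boundary defining function of $Y\t Z$ at $(\,(y,z),\text{corresponding }\be\,)$, and similarly on the $Z$ side; by Proposition \ref{mc2prop6}(b) every boundary defining function of $Y\t Z$ agrees with one of these up to multiplication by a smooth positive factor near the point. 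For (v) the pullback of $b_Y\ci\pi_Y$ under $f\t g$ equals $b_Y\ci f\ci\pi_W$, to which smoothness of $f$ applies, and symmetrically on the other factor. For (vi) the pullback of $b_Y\ci\pi_Y$ under $(f,g)$ equals $b_Y\ci f$, to which smoothness of $f$ directly applies, and likewise on the $Z$ side; here the uniqueness of $\ti\be$ required in Definition \ref{mc3def1} only needs to hold separately for each boundary defining function of $Y\t Z$, so the possibly different local boundary components produced by the $f$- and $g$-sides do not clash. The main obstacle, such as it is, lies in (v) and (vi), namely keeping track of which side of the product $Y\t Z$ a given local boundary component comes from and using Proposition \ref{mc2prop6}(b) to reduce arbitrary boundary defining functions on the product to ones pulled back from a single factor.
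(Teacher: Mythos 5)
Your proposal is correct, and it follows exactly the route the paper intends: the paper gives no proof of Theorem \ref{mc3thm} beyond declaring it elementary, and the content is precisely the definition-unwinding you carry out (weak smoothness in charts plus a case check of the boundary defining function condition, with (v),(vi) reduced to the factors via Proposition \ref{mc2prop4} and Proposition \ref{mc2prop6}(b)). Your observation in (vi) that the uniqueness of $\ti\be$ in Definition \ref{mc3def1} is required only per boundary defining function on $Y\t Z$, so the $Y$- and $Z$-sides cannot clash, is exactly the point that makes the direct product work here while failing for b-submersive maps as noted in Remark \ref{mc3rem}.
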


Theorem \ref{mc3thm}(i),(ii) show that manifolds with corners form a
{\it category\/}, which we write $\Manc$, with objects manifolds
with corners $X$ and morphisms smooth maps $f:X\ra Y$. We write
$\Manb$ for the full subcategory of $\Manc$ whose objects are
manifolds with boundary, and $\Man$ for the full subcategory of
$\Manc$ whose objects are manifolds without boundary, so that $\Man
\subset\Manb\subset\Manc$. Theorem \ref{mc3thm}(v),(vi) have a
category-theoretic interpretation in terms of products in $\Manc$,
and (vii) says that $\es$ is an {\it initial object\/} in $\Manc$,
and $\{0\}$ is a {\it terminal object\/} in $\Manc$. Here are some
examples.

\begin{ex}{\bf(a)} If $X$ is a manifold with corners, the diagonal
map $\De_X:X\ra X\t X$, $\De_X:x\mapsto(x,x)$, is a smooth
embedding. This follows from Theorem \ref{mc3thm}(ii),(vi), as
$\De_X=(\id_X,\id_X)$. If $\pd X\ne\es$ then $\De_X$ is not
b-submersive, so it is not a morphism of manifolds in the sense of
Monthubert~\cite[Def.~2.8]{Mont}.
\smallskip

\noindent{\bf(b)} If $X,Y$ are manifolds with corners then the
projection $\pi_X:X\t Y\ra X$ is a smooth submersion. This follows
from Theorem \ref{mc3thm}(ii),(v),(vii), by identifying $\pi_X:X\t
Y\ra X$ with $\id_X\t\pi:X\t Y\ra X\t\{0\}$.
\smallskip

\noindent{\bf(c)} The inclusion $i:[0,\iy)\ra\R$ is smooth, but it
is not a submersion, since at $0\in[0,\iy)$ the map $\d
i\vert_0:T_0S^0\bigl([0,\iy)\bigr)\ra T_0S^1(\R)$ is not surjective.
\smallskip

\noindent{\bf(d)} The map $f:\R\ra[0,\iy)$, $f(x)=x^2$ is weakly
smooth but {\it not\/} smooth, as the additional condition in
Definition \ref{mc3def1} fails at~$x=0$.
\smallskip

\noindent{\bf(e)} The map $f:[0,\iy)^2\ra[0,\iy)$, $f(x,y)=x+y$ is
weakly smooth but {\it not\/} smooth, as Definition \ref{mc3def1}
fails at~$(x,y)=(0,0)$.
\smallskip

\noindent{\bf(f)} The map $f:[0,\iy)^2\ra[0,\iy)$, $f(x,y)=xy$ is
weakly smooth but {\it not\/} smooth, as Definition \ref{mc3def1}
fails at $(x,y)=(0,0)$. However, $f$ is a b-map in the sense of
Melrose \cite[\S 1.12]{Melr2}, with~$e_{11}=e_{21}=1$.
\label{mc3ex}
\end{ex}

\section{Describing how smooth maps act on corners}
\label{mc4}

If $f:X\ra Y$ is a smooth map of manifolds with corners, then $f$
may relate $\pd^kX$ to $\pd^lY$ for $k,l\ge 0$ in complicated ways.
We now explain two different ways to describe these relations. The
first involves a decomposition $X\t_Y\pd Y=\Xi^f_+\amalg\Xi^f_-$ and
maps $\xi^f_+:\Xi^f_+\ra X$ and $\xi^f_-:\Xi^f_-\ra\pd X$. This will
be important in \cite{Joyc} when we define {\it d-manifolds with
corners\/} and {\it d-orbifolds with corners}, which are `derived'
versions of manifolds and orbifolds with corners. To make this
generalization we find it helpful to replace a manifold with corners
$X$ by the triple $(X,\pd X,i_X)$, so we need to characterize smooth
maps $f:X\ra Y$ in terms of the triples~$(X,\pd X,i_X),(Y,\pd
Y,i_Y)$.

\begin{dfn} Let $X,Y$ be manifolds with corners, and $f:X\ra Y$ a
smooth map. Consider the smooth maps $f:X\ra Y$ and $i_Y:\pd Y\ra Y$
as continuous maps of topological spaces. Then we may form the fibre
product of topological spaces $X\t_Y\pd Y=X\t_{f,Y,i_Y}\pd Y$, given
explicitly by
\begin{equation*}
X\t_Y\pd Y=\bigl\{\bigl(x,(y,\be)\bigr)\in X\t\pd
Y:f(x)=y=i_Y(y,\be)\bigr\}.
\end{equation*}
This is a closed subspace of the topological space $X\t\pd Y$, since
$X,\pd Y$ are Hausdorff, and so it is a topological space with the
subspace topology.

By Definition \ref{mc3def1}, for each $\bigl(x,(y,\be)\bigr)\in
X\t_Y\pd Y$, if $(V,b)$ is a boundary defining function for $Y$ at
$(y,\be)$, then either $b\ci f\equiv 0$ near $x$, or
$(f^{-1}(V),b\ci f)$ is a boundary defining function for $X$ at some
$(x,\ti\be)$. Define subsets $\Xi^f_+,\Xi^f_-$ of $X\t_Y\pd Y$ by
$\bigl(x,(y,\be)\bigr)\in\Xi^f_+$ if $b\ci f\equiv 0$ near $x$, and
$\bigl(x,(y,\be)\bigr)\in\Xi^f_-$ otherwise. Define maps
$\xi^f_+:\Xi^f_+\ra X$ by $\xi^f_+\bigl(x,(y,\be)\bigr)=x$ and
$\xi^f_-:\Xi^f_-\ra\pd X$ by $\xi^f_-\bigl(x,(y,\be)\bigr)
=(x,\ti\be)$, for $(x,\ti\be)$ as above. It is easy to show that
$\Xi^f_\pm,\xi^f_\pm$ can also be defined solely in terms of
$\bigl(x,(y,\be)\bigr)$ and $\d f\vert_x$, and so they are
independent of the choice of $(V,b)$, and are well-defined.
\label{mc4def1}
\end{dfn}

Here are some properties of these $\Xi^f_\pm,\xi^f_\pm$. A
continuous map $g:X\ra Y$ is a {\it finite covering map\/} if every
$y\in Y$ has an open neighbourhood $U$ such that $g^{-1}(U)$ is
homeomorphic to $U\t T$ for some finite set $T$ with the discrete
topology, and $g:\xi_f^{-1}(U)\ra U$ corresponds to the
projection~$U\t S\ra U$.

\begin{prop} Let\/ $f:X\ra Y$ be a smooth map of manifolds with
corners, and\/ $\Xi^f_\pm,\xi^f_\pm$ be as in Definition
{\rm\ref{mc4def1},} and set\/ $n=\dim X$. Then
\begin{itemize}
\setlength{\itemsep}{0pt}
\setlength{\parsep}{0pt}
\item[{\rm(a)}] $\Xi^f_\pm$ are open and closed subsets of\/
$X\t_Y\pd Y,$ with\/~$X\t_Y\pd Y=\Xi^f_+\amalg\Xi^f_-$.
\item[{\rm(b)}] $\xi^f_+:\Xi^f_+\ra X$ and\/ $\xi^f_-:\Xi^f_-\ra\pd
X$ are proper, finite covering maps of topological spaces, with\/
$\smash{\xi^f_+\equiv\pi_X\vert_{\Xi^f_+}}$ and\/~$\smash{i_X\ci
\xi^f_-\equiv\pi_X\vert_{\Xi^f_-}}$.
\item[{\rm(c)}] Part\/ {\rm(b)} implies there is a unique way
to make\/ $\smash{\Xi^f_+}$ into an $n$-manifold with corners and\/
$\Xi^f_-$ into an $(n\!-\!1)$-manifold with corners so that\/
$\xi^f_\pm$ are local diffeomorphisms, and so covering maps of
manifolds. Then the projections $\pi_X:\Xi^f_\pm\ra X$ and\/
$\pi_{\pd Y}:\Xi^f_\pm\ra\pd Y$ are smooth maps.
\end{itemize}
\label{mc4prop1}
\end{prop}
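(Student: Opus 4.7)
The plan is to reduce everything to a local coordinate computation near a point $p=(x,(y,\be))\in X\t_Y\pd Y$. I choose a chart $(U,\phi)$ for $X$ near $x$ with $\phi(0)=x$ and a chart $(V,\psi)$ for $Y$ near $y$ with $\psi(0)=y$, arranging by relabelling coordinates that $\be=\psi_*(\{y_j=0\})$ for some $j\le\depth_Yy$. Locally $f$ is given by smooth components $f=(f_1,\ldots,f_n)$, and the pullback of the boundary defining function $y_j$ of $Y$ at $(y,\be)$ is $f_j$. By Definition \ref{mc3def1} applied to $f$, exactly one of the following holds: either $f_j\equiv 0$ on a neighbourhood of $0$ in $U$ (placing $p$ in $\Xi^f_+$), or $f_j$ is a boundary defining function for $X$ at $(x,\ti\be)$ for some unique local boundary component $\ti\be$ of $X$ (placing $p$ in $\Xi^f_-$). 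This gives the disjoint decomposition $X\t_Y\pd Y=\Xi^f_+\amalg\Xi^f_-$. Each condition propagates to a neighbourhood of $p$: in the $+$-case I shrink $U$ so that $f_j\equiv 0$ throughout $U$; in the $-$-case I invoke Proposition \ref{mc2prop6}(c) to replace $(U,\phi)$ by a chart with $f_j\ci\phi\equiv x_1$, so that every nearby point of $X\t_Y\pd Y$ necessarily has $f_j=0$ and remains in $\Xi^f_-$. Hence $\Xi^f_\pm$ are open in $X\t_Y\pd Y$, and therefore also closed, proving part (a).

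For part (b), the identities $\xi^f_+\equiv\pi_X|_{\Xi^f_+}$ and $i_X\ci\xi^f_-\equiv\pi_X|_{\Xi^f_-}$ are immediate from Definition \ref{mc4def1}. A point of $(\xi^f_+)^{-1}(x)$ is determined by a local boundary component of $Y$ at $f(x)$, of which there are $\depth_Yf(x)<\iy$; the same argument bounds fibres of $\xi^f_-$. For properness of $\xi^f_+$, given compact $K\subseteq X$ the set $f(K)\subseteq Y$ is compact, so $i_Y^{-1}(f(K))$ is compact by Lemma \ref{mc2lem}, and $(\xi^f_+)^{-1}(K)$ is closed (since $\Xi^f_+$ is closed) in $K\t i_Y^{-1}(f(K))$, hence compact. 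An analogous argument using continuity of $i_X$ and the relation $i_X\ci\xi^f_-=\pi_X|_{\Xi^f_-}$ handles $\xi^f_-$.

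The main step in (b) is showing $\xi^f_\pm$ are local homeomorphisms, since combined with the preceding this makes them finite covering maps. In the $+$-case local model, the subset $\{(x',(f(x'),\be)):x'\in\phi(U)\}$ of $\Xi^f_+$ maps homeomorphically onto $\phi(U)$ under $\xi^f_+$. In the $-$-case local model with $f_j\ci\phi\equiv x_1$, points $(x',(f(x'),\be))$ with $x'\in\phi(U)$ lie in $X\t_Y\pd Y$ exactly when $f_j(x')=0$, and this subset maps homeomorphically under $\xi^f_-$ onto a neighbourhood of $(x,\ti\be)$ in $\pd X$ (as chart-described by \eqref{mc2eq2}). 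The main obstacle is to trivialize simultaneously over all $\be$ in a given fibre of $\xi^f_\pm$ above $x$ (resp.\ $(x,\ti\be)$); since the fibre is finite, I intersect the finitely many resulting charts to obtain one open set trivializing every sheet at once.

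For part (c), a finite topological covering map onto a manifold with corners inherits a unique smooth structure making it a local diffeomorphism, obtained by pulling back the maximal atlas along local inverses of $\xi^f_\pm$. This gives $\Xi^f_+$ an $n$-manifold-with-corners structure and $\Xi^f_-$ an $(n-1)$-manifold-with-corners structure. The projection $\pi_X$ equals $\xi^f_+$ on $\Xi^f_+$ (a local diffeomorphism, hence smooth) and equals $i_X\ci\xi^f_-$ on $\Xi^f_-$ (smooth by Theorem \ref{mc3thm}(i),(iv)). For $\pi_{\pd Y}$, in the local coordinates above and using \eqref{mc2eq2}, the map reads $u\mapsto(f_1(u),\ldots,\widehat{f_j(u)},\ldots,f_n(u))$; it is smooth in the sense of Definition \ref{mc3def1} because each boundary-defining-function condition at a remaining local boundary component of $\pd Y$ reduces to the corresponding condition for $f$ at the same component of $Y$, which holds by hypothesis.
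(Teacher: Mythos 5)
Your proof is correct and follows essentially the same strategy as the paper's: the dichotomy of Definition \ref{mc3def1} gives the decomposition, persistence of a fixed boundary defining function under small perturbations gives openness of $\Xi^f_\pm$, explicit local inverses (together with properness and finiteness inherited from $i_Y$ via Lemma \ref{mc2lem}) give the covering-map property, and the smooth structures are pulled back along $\xi^f_\pm$. The only real divergence is the smoothness of $\pi_{\pd Y}$ in (c), where the paper deduces it from $i_Y\ci\pi_{\pd Y}\equiv f\ci\pi_X$ together with $i_Y$ being an immersion, whereas you verify Definition \ref{mc3def1} directly in the coordinates normalized by Proposition \ref{mc2prop6}(c); both arguments are sound.
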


\begin{proof} For (a), clearly $X\t_Y\pd Y=\Xi^f_+\amalg\Xi^f_-$.
Let $\bigl(x,(y,\be)\bigr)\in X\t_Y\pd Y$, and $(V,b)$ be a boundary
defining function for $Y$ at $(y,\be)$. Then $(V,b)$ is also a
boundary defining function for $Y$ at any $(y',\be')$ sufficiently
close to $(y,\be)$ in $\pd Y$. Hence if $\bigl(x',(y',\be')\bigr)$
is sufficiently close to $\bigl(x,(y,\be)\bigr)\in X\t_Y\pd Y$ then
$(V,b)$ is also a boundary defining function for $Y$ at $(y',\be')$.
We have $\bigl(x,(y,\be)\bigr)\in\Xi^f_+$ if $b\ci f\equiv 0$ near
$x$. Fixing $(V,b)$ this is an open condition in $x$, so $\Xi^f_+$
is open in $X\t_Y\pd Y$, and thus $\Xi^f_-=(X\t_Y\pd Y)\sm\Xi^f_+$
is closed in $X\t_Y\pd Y$. Similarly, $\bigl(x,(y,\be)\bigr)
\in\Xi^f_-$ if $(f^{-1}(V),b\ci f)$ is a boundary defining function
for $X$ at some $(x,\ti\be)$. Fixing $(V,b)$ this is an open
condition in $(x,\ti\be)$, so $\Xi^f_-$ is open, and
$\Xi^f_+=(X\t_Y\pd Y)\sm\Xi^f_-$ is closed, proving~(a).

For (b), the identities $\smash{\xi^f_+\equiv\pi_X\vert_{\Xi^f_+}}$
and $\smash{i_X\ci \xi^f_-\equiv\pi_X\vert_{\Xi^f_-}}$ are
immediate. First consider $\xi^f_+:\Xi^f_+\ra X$. Since $i_Y:\pd
Y\ra Y$ is proper and finite by Lemma \ref{mc2lem}, $\pi_X:X\t_Y\pd
Y\ra X$ is proper and finite by properties of topological fibre
products, and so $\xi^f_+=\pi_X\vert_{\smash{\Xi^f_+}}:\Xi^f_+\ra X$
is proper and finite as $\Xi^f_+$ is closed in $X\t_Y\pd Y$. To see
that $\xi^f_+$ is a covering map, note that it is a local
homeomorphism, since as above, given
$\bigl(x,(y,\be)\bigr)\in\Xi^f_+$, if $x'$ is close to $x$ in $X$
then setting $y'=f(x')$, $(V,b)$ is a boundary defining function for
$Y$ at $(y',\be')$ for some unique local boundary component $\be'$
of $Y$ at $\be'$, and then $\bigl(x',(y',\be')\bigr)\in\Xi^f_+$ with
$\xi^f_+\bigl(x',(y',\be')\bigr)=x'$. We have constructed a local
inverse $x'\mapsto\bigl(x',(y',\be')\bigr)$ for $\xi^f_+$ which is
clearly continuous, so $\xi^f_+$ is a local homeomorphism, and thus
a finite covering map, as it is finite.

Next consider $\xi^f_-:\Xi^f_-\ra\pd X$. As above, given
$\bigl(x,(y,\be)\bigr)\in\Xi^f_-$ we may fix a boundary defining
function $(V,b)$ for $Y$ at $(y,\be)$, and then for
$\bigl(x',(y',\be')\bigr)$ near $\bigl(x,(y,\be)\bigr)$ in $\Xi^f_-$
we have $\xi^f_-\bigl(x',(y',\be')\bigr)=(x',\ti\be')$, where
$\ti\be'$ is the unique local boundary component of $X$ at $x'$ such
that $(f^{-1}(V),b\ci f)$ is a boundary defining function for $X$ at
$(x',\ti\be')$. Therefore $\xi^f_-$ is continuous, as $\ti\be'$
depends continuously on $x'$. As above $\pi_X:X\t_Y\pd Y\ra X$ is
proper and finite, so
$i_X\ci\xi^f_-=\pi_X\vert_{\smash{\Xi^f_-}}:\Xi^f_- \ra X$ is proper
and finite as $\Xi^f_-$ is closed, and hence $\xi^f_-$ is proper and
finite. We show $\xi^f_-$ is a finite covering map by constructing a
local inverse $(x',\ti\be')\mapsto\bigl(x',(y',\be')\bigr)$ as for
$\xi^f_+$. This proves~(b).

For (c), $\pi_X:\Xi^f_+\ra X$ is $\xi^f_+:\Xi^f_+\ra X$ and
$\pi_X:\Xi^f_-\ra X$ is $i_X\ci\xi^f_-:\Xi^f_-\ra X$, so
$\pi_X:\Xi^f_\pm\ra X$ are smooth as $\xi^f_\pm$ are covering maps
of manifolds, and so smooth. To see $\pi_{\pd Y}:\Xi^f_\pm\ra\pd Y$
are smooth, note that $i_Y\ci\pi_{\pd Y}\equiv f\ci\pi_X$ as maps
$\Xi^f_\pm\ra Y$, so $i_Y\ci\pi_{\pd Y}:\Xi^f_\pm\ra Y$ is smooth,
and it follows that $\pi_{\pd Y}:\Xi^f_\pm\ra\pd Y$ is smooth as
$\pi_{\pd Y}$ is continuous and $i_Y:\pd Y\ra Y$ is an immersion.
\end{proof}

Using $\Xi^f_\pm,\xi^f_\pm$ we can define a decomposition~$\pd
X=\pd_+^fX\amalg\pd_-^fX$.

\begin{prop} Let\/ $f:X\ra Y$ be a smooth map of manifolds with
corners. Define $\pd_-^fX=\xi_-^f(\Xi_-^f)$ and\/ $\pd_+^fX=\pd
X\sm\xi_-^f(\Xi_-^f),$ so that\/ $\pd X=\pd_+^fX\amalg\pd_-^fX$.
Then $\pd_\pm^fX$ are open and closed in $\pd X,$ so they are
manifolds with corners.
\label{mc4prop2}
\end{prop}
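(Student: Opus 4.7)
The plan is to derive everything from Proposition \ref{mc4prop1}(b), which supplies $\xi_-^f:\Xi_-^f\ra\pd X$ as a proper, finite covering map. Once I show that $\pd_-^fX=\xi_-^f(\Xi_-^f)$ is both open and closed in $\pd X$, it follows immediately that its complement $\pd_+^fX$ is also clopen, and then the manifold-with-corners structure on each of $\pd_\pm^fX$ is obtained just by restricting the maximal atlas of $\pd X$, since any open subset of a manifold with corners is itself one.

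For openness, I would use that a covering map is, in particular, a local homeomorphism, hence an open map; thus $\xi_-^f(\Xi_-^f)$ is open in $\pd X$. For closedness, I would exploit that $\pd X$ is a paracompact Hausdorff manifold with corners, hence first-countable and locally compact, so that sequences detect closure. Given $y\in\ov{\xi_-^f(\Xi_-^f)}$, choose $y_n=\xi_-^f(w_n)$ in $\pd_-^fX$ with $y_n\ra y$. The set $K=\{y_n:n\ge 1\}\cup\{y\}$ is compact in $\pd X$, so by properness of $\xi_-^f$ the preimage $(\xi_-^f)^{-1}(K)\subseteq\Xi_-^f$ is compact. Hence some subsequence $w_{n_k}$ converges to a point $w\in\Xi_-^f$, and continuity of $\xi_-^f$ gives $\xi_-^f(w)=\lim_k\xi_-^f(w_{n_k})=y$, so $y\in\xi_-^f(\Xi_-^f)$.

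This shows $\pd_-^fX$ is clopen in $\pd X$, and hence so is $\pd_+^fX=\pd X\sm\pd_-^fX$, which finishes the proof. The only mildly non-formal step is the closedness argument, but it is really just the standard fact that proper continuous maps into locally compact Hausdorff spaces are closed; everything else follows formally from Proposition \ref{mc4prop1}(b) and the definitions.
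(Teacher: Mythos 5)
Your proof is correct and follows essentially the same route as the paper: openness of $\xi_-^f(\Xi_-^f)$ from $\xi_-^f$ being a covering map (hence open), and closedness from $\xi_-^f$ being proper with Hausdorff, locally compact target; your sequential argument just spells out the standard fact the paper invokes in one line. The passage to manifold structures on the clopen pieces is likewise the same.
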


\begin{proof} As $\xi_-^f:\Xi_-^f\ra\pd X$ is a covering map by
Proposition \ref{mc4prop1}(b), $\xi_-^f(\Xi_-^f)$ is open in $\pd
X$. Since $\xi_-^f$ is proper and $\Xi_-^f,\pd X$ are Hausdorff,
$\xi_-^f(\Xi_-^f)$ is closed in $\pd X$. So $\pd_-^fX$, and hence
$\pd_+^fX=\pd X\sm\pd_-^fX$, are open and closed in~$\pd X$.
\end{proof}

We can characterize b-submersive morphisms $f:X\ra Y$ in Definition
\ref{mc3def2}(v) in terms of $\Xi^f_-,\xi^f_-$. The proof is an easy
exercise.

\begin{lem} Let\/ $f:X\ra Y$ be a smooth map of manifolds with
corners. Then $f$ is b-submersive if and only if\/ $\Xi^f_-=X\t_Y\pd
Y,$ so that\/ $\Xi^f_+=\es,$ and\/ $\xi^f_-:\Xi^f_-\ra\pd X$ is
injective.
\label{mc4lem}
\end{lem}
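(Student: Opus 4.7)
The plan is to reduce everything to a local calculation at an arbitrary point $x\in X$ with $y=f(x)$. Pick charts identifying $X$ near $x$ with a neighbourhood of $0$ in $\R^m_k=[0,\iy)^k\t\R^{m-k}$ and $Y$ near $y$ with a neighbourhood of $0$ in $\R^n_l$. Label the local boundary components of $Y$ at $y$ by $\be_1,\ldots,\be_l$ (with $\be_j$ corresponding to $y_j=0$) and of $X$ at $x$ by $\ti\be_1,\ldots,\ti\be_k$. By Definition \ref{mc3def1} together with Proposition \ref{mc2prop6}(b),(d), for each $j=1,\ldots,l$ exactly one of the following holds near $x$: either (+) $y_j\ci f\equiv 0$, or (-) $y_j\ci f=x_{i(j)}\cdot g_j$ for some unique $i(j)\in\{1,\ldots,k\}$ and some smooth $g_j>0$, in which case $\xi^f_-\bigl(x,(y,\be_j)\bigr)=(x,\ti\be_{i(j)})$.

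Next I would unpack the map $(\d f\vert_x)_*:T_xX/T_x(S^k(X))\ra T_yY/T_y(S^l(Y))$. In the chosen coordinates these quotients are $\R^k$ and $\R^l$ with bases $\pd/\pd x_i$ and $\pd/\pd y_j$. A direct differentiation (evaluating at $x=0$, where each $x_{i(j)}$ vanishes) shows
\e
(\d f\vert_x)_*\bigl(\pd/\pd x_i\bigr)=\sum_{j:\,j\text{ in (-)},\,i(j)=i}g_j(x)\,\pd/\pd y_j,
\label{loccomp}
\e
since in case (+) the partial derivatives of $y_j\ci f$ all vanish and in case (-) only the $x_{i(j)}$-derivative is nonzero.

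The main step is then a small linear algebra observation: the matrix \eq{loccomp} is surjective if and only if (a) no $j$ lies in case (+), and (b) the assignment $j\mapsto i(j)$ is injective on $\{1,\ldots,l\}$. The ``only if'' is immediate: a case-(+) $j$ produces a zero row, while $i(j_1)=i(j_2)$ collapses the two relevant rows into the span of a single column; the ``if'' direction is clear because the resulting submatrix is diagonal with strictly positive diagonal entries $g_j(x)>0$.

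Finally I would globalise. Condition (a) at every $x$ is precisely the statement that no $(x,(y,\be))\in X\t_Y\pd Y$ satisfies $b\ci f\equiv 0$ near $x$, i.e.\ $\Xi^f_+=\es$ and $\Xi^f_-=X\t_Y\pd Y$. Granted (a), condition (b) at $x$ says that the assignment $\be_j\mapsto\ti\be_{i(j)}$ at $y=f(x)$ is injective; but by the identification above this assignment is exactly the restriction of $\xi^f_-$ to the fibre $\pi_X^{-1}(x)\subseteq\Xi^f_-$, so (b) at all $x$ is equivalent to global injectivity of $\xi^f_-$ (distinct points in different fibres are already separated by $\pi_X\vert_{\Xi^f_-}=i_X\ci\xi^f_-$). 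I do not anticipate any serious obstacle; the only subtlety is to confirm that the unique $\ti\be$ produced by Definition \ref{mc3def1} coincides with $\ti\be_{i(j)}$ in the local model, which is exactly the content of Proposition \ref{mc2prop6}(b),(d).
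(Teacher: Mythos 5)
Your proof is correct, and it is essentially the argument the paper intends: the paper states this lemma with ``the proof is an easy exercise,'' and the expected exercise is exactly your local computation --- using Proposition \ref{mc2prop6}(b),(d) to write $y_j\ci f$ as either $0$ or $x_{i(j)}\cdot g_j$ with $g_j>0$ (the same normal form used in the proof of Proposition \ref{mc5prop1}), reading off the matrix of $(\d f\vert_x)_*$, and translating surjectivity into $\Xi^f_+=\es$ plus fibrewise (hence global, via $i_X\ci\xi^f_-=\pi_X\vert_{\Xi^f_-}$) injectivity of $\xi^f_-$. No gaps.
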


We now move on to our second way of describing how $f$ relates
$\pd^kX$ and $\pd^lY$. Equation \eq{mc2eq8} showed that if $X$ is a
manifold with corners then $X\mapsto\coprod_{i\ge 0}C_i(X)$ commutes
with products of manifolds. We will explain how to lift a smooth map
$f:X\ra Y$ up to a map $C(f):\coprod_{i\ge 0}C_i(X)\ra\coprod_{j\ge
0}C_j(Y)$ which is (in a generalized sense) smooth, and which is
functorial in a very strong sense.

\begin{dfn} Let $X,Y$ be smooth manifolds with corners and $f:X\ra
Y$ a smooth map. Define $C(f):\coprod_{i=0}^{\dim X} C_i(X)\ra
\coprod_{j=0}^{\dim Y}C_j(Y)$ by
\e
\begin{split}
&C(f):\bigl(x,\{\ti\be_1,\ldots,\ti\be_i\}\bigr)\longmapsto
\bigl(y,\{\be_1,\ldots,\be_j\}\bigr),\quad\text{where}\\
&\{\be_1,\ldots,\be_j\}\!=\!\bigl\{\be:\bigl(x,(y,\be)\bigr)\!\in\!
\Xi^f_-,\; \xi^f_-\bigl(x,(y,\be)\bigr)\!=\!(x,\ti\be_l),\;
l\!=\!1,\ldots,i\bigr\}.\!\!\!\!\!
\end{split}
\label{mc3eq2}
\e
\label{mc4def2}
\end{dfn}

\begin{dfn} Let $\{X_i:i\in I\}$ and $\{Y_j:j\in J\}$ be families of
manifolds, where $I,J$ are indexing sets. We do not assume that all
$X_i$ have the same dimension, or that all $Y_j$ have the same
dimension, so $\coprod_{i\in I}X_i$ and $\coprod_{j\in J}Y_j$ need
not be manifolds. We call a map $f:\coprod_{i\in I}X_i\ra
\coprod_{j\in J}Y_j$ {\it smooth\/} if $f$ is continuous, and for
all $i\in I$ and $j\in J$ the map
\begin{equation*}
f\vert_{X_i\cap f^{-1}(Y_j)}:X_i\cap f^{-1}(Y_j)\ra Y_j
\end{equation*}
is a smooth map of manifolds. Here $Y_j$ is an open and closed
subset of the topological space $\coprod_{j\in J}Y_j$, so $X_i\cap
f^{-1}(Y_j)$ is an open and closed subset of $X_i$ as $f$ is
continuous, and thus $X_i\cap f^{-1}(Y_j)$ is a manifold.
\label{mc4def3}
\end{dfn}

The next theorem, in part parallel to Theorem \ref{mc3thm}, gives
properties of these maps $C(f)$. The proofs are elementary. The
theorem basically says that mapping $X\mapsto \coprod_{i\ge
0}C_i(X)$ and $f\mapsto C(f)$ yields a {\it functor\/} which
preserves smoothness, composition, identities, boundaries $\pd X$,
immersions $i_X:\pd X\ra X$, and products and direct products of
smooth maps. Theorem \ref{mc6thm2} will also show that the functor
preserves strongly transverse fibre products.

\begin{thm} Let\/ $W,X,Y,Z$ be manifolds with corners.
\begin{itemize}
\setlength{\itemsep}{0pt}
\setlength{\parsep}{0pt}
\item[{\bf(i)}] If\/ $f:X\ra Y$ is smooth then\/
$C(f):\coprod_{i\ge 0}C_i(X)\ra\coprod_{j\ge 0}C_j(Y)$ is smooth
in the sense of Definition\/~{\rm\ref{mc4def3}}.
\item[{\bf(ii)}] If\/ $f:X\ra Y$ and\/ $g:Y\ra Z$ are smooth then
$C(g\ci f)=C(g)\ci C(f):\coprod_{i\ge 0}C_i(X)\ra\coprod_{k\ge
0}C_k(Z)$.
\item[{\bf(iii)}] $C(\id_X)=\id_{\coprod_{k\ge 0}C_k(X)}:
\coprod_{k\ge 0}C_k(X)\ra\coprod_{k\ge 0}C_k(X)$.
\item[{\bf(iv)}] The diffeomorphisms $C_k(\pd X)\cong\pd
C_k(X)$ in \eq{mc2eq9} identify
\begin{align*}
&\ts C(i_X):\coprod_{k\ge 0}C_k(\pd X)\longra \coprod_{k\ge 0}C_k(X)
\qquad\text{with}\\
&i_{\coprod_{k\ge 0}C_k(X)}:=\ts \coprod_{k\ge 0}i_{C_k(X)}=\ts
\coprod_{k\ge 0}\pd C_k(X)\longra \coprod_{k\ge 0}C_k(X).
\end{align*}
\item[{\bf(v)}] Let\/ $f:W\ra Y$ and\/ $g:X\ra Z$ be smooth
maps. Then \eq{mc2eq8} gives
\ea
\ts\coprod_{m\ge 0}C_m(W\t X)&\ts\cong\bigl[\coprod_{i\ge
0}C_i(W) \bigr]\t\bigl[\coprod_{j\ge 0}C_j(X)\bigr],
\nonumber\\
\ts\coprod_{n\ge 0}C_n(Y\t Z)&\ts\cong\bigl[\coprod_{k\ge
0}C_k(Y)\bigr]\t\bigl[\coprod_{l\ge 0}C_l(Z)\bigr].
\label{mc3eq3}
\ea
These identify $C(f\t g):\coprod_{m\ge 0}C_m(W\t
X)\ra\coprod_{n\ge 0}C_n(Y\t Z)$ with\/ $C(f)\t
C(g)\!:\!\bigl[\coprod_{i\ge 0}\!C_i(W)
\bigr]\!\t\!\bigl[\coprod_{j\ge 0}\!C_j(X) \bigr]\!\ra\!
\bigl[\coprod_{k\ge 0}\!C_k(Y)\bigr]\!\t\!\bigl[\coprod_{l\ge
0}\!C_l(Z)\bigr]$.
\item[{\bf(vi)}] Let\/ $f:X\ra Y$ and\/ $g:X\ra Z$ be smooth maps.
Then \eq{mc3eq3} identifies $C\bigl((f,g)\bigr):\coprod_{j\ge
0}C_j(X)\ra\coprod_{n\ge 0}C_n(Y\t Z)$ with\/
$\bigl(C(f),C(g)\bigr):\coprod_{j\ge
0}C_j(X)\ra\bigl[\coprod_{k\ge
0}C_k(Y)\bigr]\t\bigl[\coprod_{l\ge 0}C_l(Z)\bigr]$.
\item[{\bf(vii)}] Let\/ $f:X\ra Y$ be a b-submersive smooth map.
Then $C(f)$ maps $C_i(X)\ra \coprod_{j\ge 0}^iC_j(Y)$ for
all\/~$i\ge 0$.
\end{itemize}
\label{mc4thm}
\end{thm}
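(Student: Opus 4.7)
The plan is to prove all seven parts by careful bookkeeping with the objects $\Xi^f_\pm,\xi^f_\pm$ from Definition \ref{mc4def1} and Proposition \ref{mc4prop1}, together with the defining formula \eq{mc3eq2}. The unifying observation is this: for a smooth $f:X\to Y$ and $x\in X$ with $y=f(x)$, the formula \eq{mc3eq2} says that $C(f)$ records, for each unordered $i$-tuple $\{\ti\be_1,\ldots,\ti\be_i\}$ of local boundary components of $X$ at $x$, the set of local boundary components $\be$ of $Y$ at $y$ for which the fibre of $\xi^f_-$ over some $(x,\ti\be_l)$ contains $\bigl(x,(y,\be)\bigr)$. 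Hence every assertion of the theorem reduces to a property of $\Xi^f_-$ and $\xi^f_-$, which we already control by Proposition \ref{mc4prop1}.

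For (i), continuity and smoothness on each $C_i(X)\cap C(f)^{-1}(C_j(Y))$ follow because $\xi^f_-:\Xi^f_-\to\pd X$ is a local diffeomorphism (Proposition \ref{mc4prop1}(c)) and the projection $\pi_{\pd Y}:\Xi^f_-\to\pd Y$ is smooth; passing to quotients by $S_i$ and $S_j$ preserves smoothness since these act by diffeomorphisms. For (ii), given $f:X\to Y$ and $g:Y\to Z$ with $y=f(x),$ $z=g(y)$ and a local boundary component $\ga$ of $Z$ at $z$ with boundary defining function $(W,c)$, the pullback $c\circ g$ is either zero near $y$ or a boundary defining function for $Y$ at some $(y,\be)$; in the latter case $c\circ g\circ f$ is either zero near $x$ or a boundary defining function for $X$ at some $(x,\ti\be)$. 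This chain-rule bookkeeping shows $\bigl(x,(z,\ga)\bigr)\in\Xi^{g\circ f}_-$ iff $\bigl(y,(z,\ga)\bigr)\in\Xi^g_-$ and $\bigl(x,(y,\be)\bigr)\in\Xi^f_-$ for the corresponding $\be$, and then $\xi^{g\circ f}_-=\xi^f_-\circ(\id_X\t\xi^g_-)$ in the appropriate sense; feeding this into \eq{mc3eq2} yields $C(g\circ f)=C(g)\circ C(f)$. Part (iii) is immediate from $\Xi^{\id_X}_-=\pd X$ and $\xi^{\id_X}_-=\id_{\pd X}$.

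For (iv), I trace through the bijection of Proposition \ref{mc2prop5}. A direct computation at a point $(x,\be_1)\in\pd X$ and local boundary component $\be$ of $X$ at $x$ shows $\bigl((x,\be_1),(x,\be)\bigr)\in\Xi^{i_X}_-$ precisely when $\be\ne\be_1$, and then $\xi^{i_X}_-$ sends this to the local boundary component $\ti\be$ of $\pd X$ at $(x,\be_1)$ corresponding to $\be$ under Proposition \ref{mc2prop5}. Applying \eq{mc3eq2} to a point of $C_k(\pd X)\cong\{(x,\be_1,\{\be_2,\ldots,\be_{k+1}\})\}$ therefore produces $(x,\{\be_2,\ldots,\be_{k+1}\})\in C_k(X)$, which is exactly the image under $i_{C_k(X)}$ of the same point viewed as an element of $\pd C_k(X)$. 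Parts (v) and (vi) are then straightforward: \eq{mc2eq6} shows that local boundary components of $W\t X$ at $(w,x)$ are exactly the disjoint union of local boundary components of $W$ at $w$ and of $X$ at $x$, so $\Xi^{f\t g}_-$ splits as the obvious product, and \eq{mc3eq2} splits accordingly to give (v); part (vi) follows from (v) together with (ii) and (iii) by writing $(f,g)=(f\t g)\circ\Delta_X$ and checking that $C(\Delta_X)$ is the diagonal on $\coprod_j C_j(X)$, a direct consequence of \eq{mc3eq2}.

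Finally, (vii) is a two-line corollary of Lemma \ref{mc4lem}: if $f$ is b-submersive then $\xi^f_-$ is injective, so for $\bigl(x,\{\ti\be_1,\ldots,\ti\be_i\}\bigr)\in C_i(X)$ each $\ti\be_l$ has at most one preimage $\bigl(x,(y,\be)\bigr)$ under $\xi^f_-$, giving $j\le i$. The main obstacle throughout is the careful matching of unordered tuples of local boundary components between source and target across compositions and products; all of this is handled by Proposition \ref{mc2prop6}(b), which guarantees that boundary defining functions differ only by smooth positive multipliers, so the sets $\Xi^f_\pm$ and the maps $\xi^f_\pm$ are intrinsic and behave transparently under composition.
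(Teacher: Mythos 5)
Your proof is correct, and it is exactly the elementary unwinding of Definition \ref{mc4def1}, equation \eq{mc3eq2} and Proposition \ref{mc4prop1} that the paper has in mind (the paper itself omits the proof, stating only that "the proofs are elementary"). The key computations — the chain rule for $\Xi^{g\ci f}_-,\xi^{g\ci f}_-$ in (ii), the identification of $\Xi^{i_X}_-$ with pairs $\be\ne\be_1$ in (iv), and the reduction of (vi) to (ii), (v) and $C(\De_X)$ being the diagonal — are all accurate.
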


Curiously, there is a second way to define a map $\coprod_{i\ge
0}C_i(X)\ra\coprod_{j\ge 0}C_j(Y)$ with the same properties. Define
$\hat C(f):\coprod_{i\ge 0}C_i(X)\ra\coprod_{j\ge 0}C_j(Y)$ by
\begin{align*}
&\hat C(f):\bigl(x,\{\ti\be_1,\ldots,\ti\be_i\}\bigr)\longmapsto
\bigl(y,\{\be_1,\ldots,\be_j\}\bigr),\quad\text{where}\quad
\{\be_1,\ldots,\be_j\}=\\
&\bigl\{\be:\bigl(x,(y,\be)\bigr)\!\in\!
\Xi^f_-,\; \xi^f_-\bigl(x,(y,\be)\bigr)\!=\!(x,\ti\be_l),\;
1\!\le\! l\!\le\! i\bigr\}\!\cup\!
\{\bigl\{\be:\bigl(x,(y,\be)\bigr)\!\in\!
\Xi^f_+\bigr\}.
\end{align*}
Then the analogues of Theorems \ref{mc4thm} and \ref{mc6thm2} also
hold for $\hat C(f),\hat C(g),\ldots$.

\section{Submersions}
\label{mc5}

Definition \ref{mc3def2}(iv) defined {\it submersions\/} $f:X\ra Y$
between manifolds with corners $X,Y$. We show that submersions are
locally isomorphic to projections.

\begin{prop} Let\/ $X,Y$ be manifolds with corners, $f:X\ra Y$ a
submersion, and\/ $x\in X$ with\/ $f(x)=y$. Then there exist open
neighbourhoods\/ $X',Y'$ of\/ $x,y$ in\/ $X,Y$ with\/ $f(X')=Y',$ a
manifold\/ $Z'$ with corners with\/ $\dim X=\dim Y+\dim Z',$ and a
diffeomorphism $X'\cong Y'\t Z',$ such that\/ $f\vert_{X'}:X'\ra Y'$
is identified with\/~$\pi_{Y'}:Y'\t Z'\ra Y'$.
\label{mc5prop1}
\end{prop}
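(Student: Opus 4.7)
The plan is to reduce to a local coordinate computation, where the strong smoothness of Definition \ref{mc3def1} together with the submersion conditions force $f$ into a standard form that can then be straightened by the classical inverse function theorem.

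Set $m=\dim X$, $n=\dim Y$, $k=\depth_Xx$, $l=\depth_Yy$, and choose charts $(U,\phi)$ on $X$, $(V,\psi)$ on $Y$ with $U\subseteq\R^m_k$, $V\subseteq\R^n_l$ open and $\phi(0)=x$, $\psi(0)=y$. Write $F=\psi^{-1}\ci f\ci\phi$. By Proposition \ref{mc2prop6}(d), each coordinate $x_i$ on $V$ for $i=1,\ldots,l$ is a boundary defining function for $Y$ at $\psi_*(\{x_i=0\})$, so Definition \ref{mc3def1} asserts that $x_i\ci F$ is either identically zero near $0$ or a boundary defining function for $X$ at some local boundary component $\ti\be_i$. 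Surjectivity of $\d f\vert_x$ forces $\d(x_i\ci F)\vert_0\ne 0$, so the first case never occurs. Moreover $\ti\be_1,\ldots,\ti\be_l$ must be pairwise distinct: by Proposition \ref{mc2prop6}(b,c) each is witnessed by a coordinate function in some chart, so distinctness of the underlying boundary components is equivalent to linear independence of the differentials $\d(x_i\ci F)\vert_0$, which holds since $\d F\vert_0$ is surjective. In particular $k\ge l$, and also $m-k\ge n-l$ by applying surjectivity of $\d f\vert_x:T_x(S^k(X))\ra T_y(S^l(Y))$.

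Now I would build an adapted chart. After reordering the boundary directions of $U$ we may assume $\ti\be_i$ corresponds to $\{x_i=0\}$ for $i=1,\ldots,l$, and by Proposition \ref{mc2prop6}(b) we have $x_i\ci F\equiv x_ig_i$ for smooth $g_i>0$ near $0$. The change of variables $\ti x_i=x_ig_i$ for $i\le l$, $\ti x_i=x_i$ otherwise, has diagonal Jacobian with positive entries at $0$, preserves every boundary hyperplane $\{x_i=0\}$, and is therefore a local diffeomorphism of manifolds with corners. In the new chart, $F=(\ti x_1,\ldots,\ti x_l,F_{l+1},\ldots,F_n)$. The surjectivity of $\d f\vert_x:T_x(S^k(X))\ra T_y(S^l(Y))$ translates to surjectivity of the Jacobian of $(F_{l+1},\ldots,F_n)$ with respect to the interior variables $(\ti x_{k+1},\ldots,\ti x_m)$; after permuting them, the $(n-l)\t(n-l)$ block with respect to $(\ti x_{k+1},\ldots,\ti x_{k+n-l})$ is invertible. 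A second coordinate change that replaces these $n-l$ interior variables by $F_{l+1},\ldots,F_n$, legitimised by the classical inverse function theorem, once more fixes all boundary coordinates $\ti x_1,\ldots,\ti x_k$ and so is a further corners diffeomorphism. In the resulting chart $F$ is literally a projection onto $l$ boundary and $n-l$ interior coordinates, so after shrinking and a final permutation of coordinates we obtain $X'\cong Y'\t Z'$ with $Z'$ an open neighbourhood of $0$ in $\R^{m-n}_{k-l}$, and $f\vert_{X'}$ is identified with $\pi_{Y'}$.

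The main obstacle is the second paragraph: using the strong smoothness condition to extract $l$ genuinely \emph{distinct} local boundary components of $X$ at $x$, and then simultaneously straightening their defining functions into the first $l$ coordinates of a single corners chart while keeping control of the remaining interior structure. Once this adapted chart is in place, the interior straightening is essentially the classical submersion theorem and poses no corners-specific difficulty.
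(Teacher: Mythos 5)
Your proof is correct and follows essentially the same route as the paper's: choose centred charts, use Definition \ref{mc3def1} and Proposition \ref{mc2prop6} to write $y_i\ci\psi^{-1}\ci f\ci\phi\equiv x_ig_i$ with $g_i>0$ at distinct local boundary components (the paper gets distinctness from Lemma \ref{mc4lem}, you get it directly from surjectivity of $\d f\vert_x$ --- both are fine), and then straighten to adapted coordinates in which $f$ is a coordinate projection. Your two explicit coordinate changes (absorbing the $g_i$, then applying the inverse function theorem in the interior variables) are precisely the construction the paper compresses into ``we may choose alternative coordinates $(\ti x_1,\ldots,\ti x_m)$''.
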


\begin{proof} Let $x\in X$ and $y=f(x)\in Y$, with $\dim X=m$,
$\dim Y=n$ and $x\in S^k(X)$, $y\in S^l(X)$. Choose charts
$(U,\phi)$, $(V,\psi)$ on $X,Y$ with $U,V$ open in $\R^m_k,\R^n_l$
and $0\in U$, $0\in V$ with $\phi(0)=x$, $\psi(0)=y$ and
$f\ci\phi(U)\subseteq\psi(V)$. Write $(x_1,\ldots,x_m)$,
$(y_1,\ldots,y_n)$ for the coordinates on $U,V$ respectively. Write
$\ti\be_i$ for the local boundary component $\phi_*(\{x_i=0\})$ for
$i=1,\ldots,k$, and $\be_j$ for the local boundary component
$\psi_*(\{y_j=0\})$ for~$j=1,\ldots,l$.

Lemma \ref{mc4lem} implies that $\bigl(x,(y,\be_j)\bigr)\in\Xi_-^f$
with $\xi_-^f\bigl(x,(y,\be_j)\bigr)=(x,\ti\be_{i_j})$ for each
$j=1,\ldots,l$ and some $i_j=1,\ldots,k$, and $i_1,\ldots,i_l$ are
distinct as $\xi_-^f$ is injective. Thus $l\le k$, and reordering
$x_1,\ldots,x_k$ if necessary we suppose that $i_j=j$. By
Proposition \ref{mc2prop6}(d), $\bigl(\psi(V),y_i\ci\psi^{-1}
\bigr)$ is a boundary defining function for $Y$ at $(y,\be_i)$ for
$i=1,\ldots,l$, so by Definition \ref{mc3def1}
$\bigl(f^{-1}(\psi(V)),y_i\ci\psi^{-1}\ci f\bigr)$ is a boundary
defining function for $X$ at $(x,\ti\be_i)$ for $i=1,\ldots,l$. But
$\bigl(\phi(U),x_i\ci\phi^{-1}\bigr)$ is also a boundary defining
function for $X$ at $(x,\ti\be_i)$, so by Proposition
\ref{mc2prop6}(b), making $U$ smaller if necessary we can suppose
that
\begin{equation*}
y_i\ci\psi^{-1}\ci f\ci\phi\equiv x_i\cdot g_i
\end{equation*}
on $U$, for some smooth $g_i:U\ra(0,\iy)$ and all $i=1,\ldots,l$.

Combining this with the surjectivity conditions in Definition
\ref{mc3def2}(iv), we see that we may choose alternative coordinates
$(\ti x_1,\ldots,\ti x_n)$ on an open neighbourhood $\ti U$ of 0 in
$U$ taking values in $\R^m_k$ and zero at 0, such that
\begin{equation*}
\ti x_i\equiv \begin{cases} y_i\ci\psi^{-1}\ci f\ci\phi,&
i=1,\ldots,l, \\ x_i, & i=l+1,\ldots,k, \\
y_{i-k+l}\ci\psi^{-1}\ci f\ci\phi,& i=k+1,\ldots,n-k+l, \\
\text{some function of $x_{k+1},\ldots,x_n$,} & i=n-k+l+1,\ldots,m.
\end{cases}
\end{equation*}
Choose small $\ep>0$ so that $[0,\ep)^k\t(-\ep,\ep)^{m-k} \subseteq
\ti U$ in coordinates $(\ti x_1,\ldots,\ti x_n)$. Then defining
$X'=\phi\bigl(\{(\ti x_1,\ldots,\ti x_m)\in
[0,\ep)^k\t(-\ep,\ep)^{m-k}\}\bigr)$, $Y'=\psi\bigl([0,\ep)^l\t
(-\ep,\ep)^{n-l}\bigr)$ and $Z'=[0,\ep)^{k-l}\t
(-\ep,\ep)^{m-k-n+l}$, the proposition follows.
\end{proof}

Submersions $f:X\ra Y$ are nicely compatible with the
boundaries~$\pd X,\pd Y$.

\begin{prop} Let\/ $f:X\ra Y$ be a submersion, and\/ $\pd
X=\pd_+^fX\amalg\pd_-^fX$ be as in Proposition {\rm\ref{mc4prop2}}.
Then $f_+=f\ci i_X\vert_{\pd_+^fX}:\pd_+^fX\ra Y$ is a submersion.
There is a natural submersion $\smash{f_-:\pd_-^fX\ra\pd Y}$
with\/~$\smash{f\ci i_X\vert_{\pd_-^fX}\equiv i_Y\ci f_-}$.
\label{mc5prop2}
\end{prop}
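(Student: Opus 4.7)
The plan is to reduce both claims to the local normal form for submersions provided by Proposition \ref{mc5prop1}, after first isolating what b-submersiveness buys us globally about the decomposition $\pd X=\pd_+^fX\amalg\pd_-^fX$ and the definition of $f_-$.

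First I would observe that a submersion is b-submersive (Definition \ref{mc3def2}(v) remark), so Lemma \ref{mc4lem} gives $\Xi^f_+=\es$, $\Xi^f_-=X\t_Y\pd Y$, and $\xi^f_-:\Xi^f_-\ra\pd X$ is an injective covering map, hence a diffeomorphism onto its image $\pd_-^fX$. This lets me define $f_-:\pd_-^fX\ra\pd Y$ by $f_-=\pi_{\pd Y}\ci(\xi^f_-)^{-1}$, which is smooth because $\xi^f_-$ is a local diffeomorphism of manifolds with corners and $\pi_{\pd Y}:\Xi^f_-\ra\pd Y$ is smooth by Proposition \ref{mc4prop1}(c). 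The compatibility $f\ci i_X\vert_{\pd_-^fX}\equiv i_Y\ci f_-$ follows immediately from the identities $i_X\ci\xi^f_-\equiv\pi_X\vert_{\Xi^f_-}$ (Proposition \ref{mc4prop1}(b)) and $i_Y\ci\pi_{\pd Y}\equiv f\ci\pi_X$ on the fibre product~$X\t_Y\pd Y$.

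Next I would verify the submersion property locally. Fix $x\in\pd X$ with $y=f(i_X(x))\in Y$. By Proposition \ref{mc5prop1}, there exist neighbourhoods $X'\ni i_X(x)$ and $Y'\ni y$ and a manifold $Z'$ with $X'\cong Y'\t Z'$ under which $f\vert_{X'}$ becomes $\pi_{Y'}:Y'\t Z'\ra Y'$. Using \eq{mc2eq5}, the boundary in this local model is
\e
\pd(Y'\t Z')\cong(\pd Y'\t Z')\amalg(Y'\t\pd Z').
\label{mc5eq1}
\e
A boundary component sitting in $\pd Y'\t Z'$ is defined locally by $b\ci\pi_{Y'}=0$ where $b$ is a boundary defining function for $Y'$, so such points lie in $\pd_-^fX$; whereas for a component sitting in $Y'\t\pd Z'$, no boundary defining function of $Y'$ pulls back to a boundary defining function on $X'$ at the relevant point, so such points lie in $\pd_+^fX$. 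Hence \eq{mc5eq1} realises the decomposition $\pd X=\pd_+^fX\amalg\pd_-^fX$ locally.

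Under this identification, $f_+=f\ci i_X\vert_{\pd_+^fX}$ becomes the projection $\pi_{Y'}:Y'\t\pd Z'\ra Y'$, and $f_-$ becomes the projection $\pi_{\pd Y'}:\pd Y'\t Z'\ra\pd Y'$. Both are submersions because projections from products onto a factor are submersions: on tangent spaces the differential is just the projection $TY'\op TZ'\ra TY'$ (respectively $T\pd Y'\op TZ'\ra T\pd Y'$), which is surjective, and the same holds on each depth stratum after using \eq{mc2eq6} to decompose $S^k$ of the product into products of strata. The only delicate step is matching the local models for $\pd_\pm^fX$ to the pieces in \eq{mc5eq1}; once that is done, checking both surjectivity conditions of Definition \ref{mc3def2}(iv) is routine. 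I expect this identification, which relies on the uniqueness part of Proposition \ref{mc2prop6}(b) to pin down which boundary defining functions pull back correctly, to be the main, but still straightforward, obstacle.
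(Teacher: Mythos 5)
Your proposal is correct and follows essentially the same route as the paper's proof: define $f_-=\pi_{\pd Y}\ci(\xi^f_-)^{-1}$ using Lemma \ref{mc4lem}, verify the compatibility identity from Proposition \ref{mc4prop1}(b), and then reduce the submersion property to the local model $\pi_{Y'}:Y'\t Z'\ra Y'$ of Proposition \ref{mc5prop1}, where $\pd_+^fX$ and $\pd_-^fX$ are identified with $Y'\t\pd Z'$ and $\pd Y'\t Z'$ respectively.
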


\begin{proof} Lemma \ref{mc4lem} shows that $\xi^f_-:\Xi^f_-=
X\t_Y\pd Y\ra\pd X$ is injective. Since $\pd_-^fX=\xi^f_-(\Xi^f_-)$,
it follows that $\xi^f_-:X\t_Y\pd Y\ra\pd_-^fX$ is invertible.
Define $f_-:\pd_-^fX\ra\pd Y$ by $f_-=\pi_{\pd Y}\ci(\xi^f_-)^{-1}$.
Then $i_Y\ci f_-=i_Y\ci\pi_{\pd Y}\ci(\xi^f_-)^{-1}
=f\ci\pi_X\ci(\xi^f_-)^{-1}=f\ci i_X\vert_{\smash{\pd_-^fX}}$, since
$i_Y\ci\pi_Y=f\ci\pi_X:X\t_Y\pd Y\ra Y$ and
$\pi_X\ci(\xi^f_-)^{-1}=i_X\vert_{\smash{\pd_-^fX}}: \pd_-^fX\ra X$
as $i_X\ci\xi^f_-\equiv\pi_X\vert_{\smash{\Xi^f_-}}$ by
Proposition~\ref{mc4prop1}(b).

It remains to check that the maps $f_\pm$ are submersions. Since
being a submersion is a local property, by Proposition
\ref{mc5prop1} it is enough to show $f_\pm$ are submersions when
$f:X\ra Y$ is a projection $\pi_{Y'}:Y'\t Z'\ra Y'$. We have a
natural isomorphism $\pd(Y'\t Z')\cong(\pd Y'\t Z')\amalg(Y'\t\pd
Z')$. It is easy to see that $\pd_+^f(Y'\t Z')\cong Y'\t\pd Z'$, so
that $f_+$ becomes the projection $Y'\t\pd Z'\ra Y'$ which is a
submersion, and that $\pd_-^f(Y'\t Z')\cong\pd Y'\t Z'$, so that
$f_-$ becomes the projection $\pd Y'\t Z'\ra\pd Y'$ which is a
submersion.
\end{proof}

Note that we can iterate this construction to decompose $\pd^kX$, so
that
\begin{equation*}
\pd^2X=\pd_+^{f_+}(\pd_+^fX)\amalg \pd_-^{f_+}(\pd_+^fX)\amalg
\pd_+^{f_-}(\pd_-^fX)\amalg \pd_-^{f_-}(\pd_-^fX),
\end{equation*}
for instance, and $f$ lifts to a submersion on every piece.

\section{Transversality and fibre products of manifolds}
\label{mc6}

Let $X,Y,Z$ be manifolds with corners and $f:X\ra Z$, $g:Y\ra Z$ be
smooth maps. From category theory, a {\it fibre product\/}
$X\t_{f,Z,g}Y$ in the category $\Manc$ consists of a manifold with
corners $W$ and smooth maps $\pi_X:W\ra X$, $\pi_Y:W\ra Y$ such that
$f\ci\pi_X=g\ci\pi_Y:W\ra Z$, satisfying the universal property that
if $W'$ is a manifold with corners and $\pi_X':W'\ra X$,
$\pi_Y':W'\ra Y$ are smooth maps with $f\ci\pi_X'=g\ci\pi_Y'$, then
there exists a unique smooth map $h:W'\ra W$ with $\pi_X'=\pi_X\ci
h$ and $\pi_Y'=\pi_Y\ci h$. We now give sufficient conditions for
fibre products of manifolds with corners to exist.

\begin{dfn} Let $X,Y,Z$ be manifolds with corners and $f:X\ra
Z$, $g:Y\ra Z$ be smooth maps. We call $f,g$ {\it transverse\/} if
the following holds. Suppose $x\in X$, $y\in Y$ and $z\in Z$ with
$f(x)=z=g(y)$, so that there are induced linear maps of tangent
spaces $\d f\vert_x:T_xX\ra T_zZ$ and $\d g\vert_y:T_yY\ra T_zZ$.
Let $x\in S^j(X)$, $y\in S^k(Y)$ and $z\in S^l(Z)$, so that as in
Definition \ref{mc3def2} $\d f\vert_x$ maps $T_x(S^j(X))\ra
T_z(S^l(Z))$ and $\d g\vert_y$ maps $T_y(S^k(Y))\ra T_z(S^l(Z))$.
Then we require that $T_zZ=\d f\vert_x(T_xX)+\d g\vert_y(T_yY)$ and
$T_z(S^l(Z))=\d f\vert_x(T_x(S^j(X)))+\d g\vert_y(T_y(S^k(Y)))$ for
all such $x,y,z$. From Definition \ref{mc3def2}, if one of $f,g$ is
a submersion then $f,g$ are automatically transverse.
\label{mc6def1}
\end{dfn}

\begin{rem} If $X,Y,Z$ are manifolds without boundary then $j=k=l=0$
in Definition \ref{mc6def1}, and both conditions reduce to the usual
definition $T_zZ=\d f\vert_x(T_xX)+\d g\vert_y(T_yY)$ of transverse
smooth maps. When $X,Y,Z$ are manifolds with corners we believe this
definition of transversality is new, since it depends heavily on our
definition of smooth maps which is also new.

Definition \ref{mc6def1} imposes two transversality conditions on
$f,g$ at $x,y,z$, the first on the corners $C_0(X)\cong
X,C_0(Y)\cong Y,C_0(Z)\cong Z$ of $X,Y,Z$ of largest dimension at
$x,y,z$, and the second on the corners $C_j(X)\cong S^j(X)$,
$C_k(Y)\cong S^k(X)$, $C_l(Z)\cong S^l(Z)$ (locally) of $X,Y,Z$ of
smallest dimension at~$x,y,z$.

One might think that to prove Theorem \ref{mc6thm1} one would need
to impose transversality conditions on corners
$C_a(X),C_b(Y),C_c(Y)$ of intermediate dimensions $0\le a\le j$,
$0\le b\le k$, $0\le c\le l$ as well. In fact these intermediate
conditions are implied by our definition of smooth maps, since the
requirement for $f,g$ to pull boundary defining functions back to
boundary defining functions is a kind of transversality condition at
the boundaries. One of the motivations for our definition of smooth
maps of manifolds with corners was to have a simple, not too
restrictive condition for the existence of fibre products.
\label{mc6rem1}
\end{rem}

\begin{rem} Margalef-Roig and Outerelo Dominguez \cite[\S
7.2]{MaOu} also define transversality of smooth maps between
manifolds with corners, and prove their own version of Theorem
\ref{mc6thm1} below. They work with Banach manifolds and $C^p$ maps
for $p=0,1,\ldots,\iy$. For finite-dimensional manifolds, their
notion of smooth map (`map of class $\iy$') corresponds to our
weakly smooth maps. However, their notion of {\it transversality\/}
\cite[Def.~7.2.1]{MaOu} is very restrictive.

In our notation, if $f:X\ra Z$ and $g:Y\ra Z$ are weakly smooth
maps, then $f,g$ are transverse in the sense of
\cite[Def.~7.2.1]{MaOu} if and only if whenever $x\in X$ and $y\in
Y$ with $f(x)=g(y)=z\in Z$ then $z\in Z^\ci$, and $x\in S^j(X)$,
$y\in S^k(Y)$ with $T_zZ=\d f\vert_x(T_x(S^j(X)))+\d
g\vert_y(T_y(S^k(Y)))$. In particular, $f(X)$ and $g(Y)$ cannot
intersect in the boundary strata $S^l(Z)$ for $l>0$ but only in the
interior $Z^\ci$, so in effect Margalef-Roig and Outerelo Dominguez
reduce to the case in which $\pd Z=\es$, and then their
\cite[Prop.~7.2.7]{MaOu} is a special case of Theorem \ref{mc6thm1}.
So, for example, $f,g$ are generally not transverse in the sense of
\cite[Def.~7.2.1]{MaOu} if one of $f,g$ is a submersion, or even
if~$f=\id_X:X\ra X=Z$.
\label{mc6rem2}
\end{rem}

For manifolds without boundary the following theorem is well-known,
as in Lang \cite[Prop.~II.4]{Lang}. For manifolds with corners
Margalef-Roig and Outerelo Dominguez \cite[Prop.~7.2.7]{MaOu} prove
it with a stricter notion of transversality, as above. We believe
this version is new. The proof is given in~\S\ref{mc8}.

\begin{thm} Suppose\/ $X,Y,Z$ are manifolds with corners and\/
$f:X\ra Z,$ $g:Y\ra Z$ are transverse smooth maps. Then there exists
a fibre product\/ $W=X\t_{f,Y,g}Z$ in the category $\Manc$ of
manifolds with corners, which is given by an explicit construction,
as follows.

As a topological space $W=\{(x,y)\in X\t Y:f(x)=g(y)\},$ with the
topology induced by the inclusion $W\subseteq X\t Y,$ and the
projections\/ $\pi_X:W\ra X$ and\/ $\pi_Y:W\ra Y$ map
$\pi_X:(x,y)\mapsto x,$ $\pi_Y:(x,y)\mapsto y$. Let\/ $n=\dim X+\dim
Y-\dim Z$, so that\/ $n\ge 0$ if\/ $W\ne\es$. The maximal atlas on\/
$W$ is the set of all charts\/ $(U,\phi),$ where\/
$U\subseteq\R^n_k$ is open and\/ $\phi:U\ra W$ is a homeomorphism
with a nonempty open set\/ $\phi(U)$ in $W,$ such that\/
$\pi_X\ci\phi:U\ra X$ and $\pi_Y\ci\phi:U\ra Y$ are smooth maps, and
for all\/ $u\in U$ with\/ $\phi(u)=(x,y),$ the following induced
linear map of real vector spaces is injective:
\e
\d(\pi_X\ci\phi)\vert_u\op\d(\pi_Y\ci\phi)\vert_u:T_uU=\R^n\longra
T_xX\op T_yY.
\label{mc6eq1}
\e
\label{mc6thm1}
\end{thm}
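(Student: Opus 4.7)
The proof naturally splits into three parts: (1) endowing the topological space $W=\{(x,y)\in X\t Y:f(x)=g(y)\}$ with a manifold-with-corners structure by producing charts in the claimed atlas near every point; (2) verifying that the projections $\pi_X,\pi_Y$ are smooth and that $f\ci\pi_X=g\ci\pi_Y$; (3) establishing the universal property of a fibre product.

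Part (1) is local, and is the heart of the argument. Fix $(x_0,y_0)\in W$ with $z_0=f(x_0)=g(y_0)$, and let $x_0\in S^j(X),$ $y_0\in S^k(Y),$ $z_0\in S^l(Z)$. Let $\be_1,\ldots,\be_l$ be the local boundary components of $Z$ at $z_0,$ with boundary defining functions $(P_i,c_i)$ as in Proposition \ref{mc2prop6}(d). By Definition \ref{mc3def1}, for each $i$ the function $c_i\ci f$ is either identically zero near $x_0$ or a boundary defining function for $X$ at some $(x_0,\ti\be_{a(i)}),$ and analogously for $c_i\ci g$. Using Proposition \ref{mc2prop6}(c), I would choose local coordinates on $X,$ $Y,$ $Z$ in which these boundary defining functions appear as coordinate functions. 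The two surjectivities of Definition \ref{mc6def1}, one on $T_{z_0}Z$ and one on $T_{z_0}S^l(Z),$ can then be combined with an implicit-function-theorem argument applied on the interior strata, together with the matching of boundary defining functions to determine how the boundary components of $X$ and $Y$ assemble into boundary components of $W$. The outcome is a local diffeomorphism between an open set in $\R^n_{k'}$ (with $n=\dim X+\dim Y-\dim Z$ and $k'$ determined by the combinatorics of the boundary correspondences) and a neighbourhood of $(x_0,y_0)$ in $W$. The resulting chart $(U,\phi)$ automatically satisfies \eq{mc6eq1} because the normal-form construction recovers $u$ from $\pi_X\ci\phi(u)$ and $\pi_Y\ci\phi(u)$. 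Compatibility of any two such charts follows from the essential uniqueness of this normal form.

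Part (2) is then almost immediate from the definition of the atlas: the condition that $\pi_X\ci\phi$ and $\pi_Y\ci\phi$ be smooth for every chart $(U,\phi)$ is exactly weak smoothness of $\pi_X,\pi_Y,$ and the behaviour at boundary defining functions of $X$ and $Y$ required by Definition \ref{mc3def1} is inherited from the normal form in Part (1). For Part (3), given a candidate cone $W',\pi_X',\pi_Y'$ with $f\ci\pi_X'=g\ci\pi_Y',$ the map $h(w')=\bigl(\pi_X'(w'),\pi_Y'(w')\bigr)$ is the unique set-theoretic candidate and is continuous. To verify smoothness of $h$ at $w_0'$ with $h(w_0')=(x_0,y_0),$ pick a chart $(U,\phi)$ about $(x_0,y_0)$ from Part (1) and a chart $(U',\phi')$ about $w_0'$ on $W',$ and examine $\psi:=\phi^{-1}\ci h\ci\phi'$. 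Post-composition with $\pi_X\ci\phi$ and $\pi_Y\ci\phi$ yields $\pi_X'\ci\phi'$ and $\pi_Y'\ci\phi',$ which are smooth by hypothesis. The injectivity condition \eq{mc6eq1} provides a smooth local left inverse to $\d(\pi_X\ci\phi)\op\d(\pi_Y\ci\phi)$, so $\psi$ itself is (weakly) smooth; the additional boundary-defining-function condition in Definition \ref{mc3def1} for $h$ is inherited from the corresponding conditions for $\pi_X',\pi_Y'$ through the correspondences of local boundary components set up in Part (1).

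The main obstacle is clearly Part (1): the combinatorial bookkeeping of which boundary components of $X$ and $Y$ map to which boundary components of $Z,$ and then the simultaneous use of both layers of transversality (on $TZ$ and on $TS^l(Z)$) together with the smooth-map condition to put $f,g$ into a common normal form whose fibre product is manifestly $\R^n_{k'}$. This is presumably where the detailed computations of \S\ref{mc8} will concentrate, and I would not attempt to execute them at the level of a sketch.
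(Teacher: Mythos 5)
Your outline follows the same route as the paper's proof in \S\ref{mc8}: work locally near $(x_0,y_0)$, reduce to the boundaryless fibre product theorem on the interior/ambient level, use Definition \ref{mc3def1} to match boundary defining functions of $Z$ with those of $X$ and $Y$, and then verify the universal property with the canonical map $h=(\pi_X',\pi_Y')$, whose weak smoothness follows from the ambient fibre product and whose boundary condition is inherited from $\pi_X',\pi_Y'$. Parts (2) and (3) of your sketch are essentially correct as stated.

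The gap is that Part (1), which you explicitly decline to execute, is not a routine computation but contains the mathematical content of the theorem, and your sketch does not identify the ideas that make it work. Concretely: after extending $f,g$ to maps $\hat f,\hat g$ of open sets and forming the boundaryless fibre product $\hat V$, the candidate $V$ is cut out of $\hat V$ by $a+b$ inequalities $r_i\ge 0$, $s_i\ge 0$ (where $a=\depth_Xx_0$, $b=\depth_Yy_0$); the problem is to show that near $(0,0)$ exactly $a+b-c$ of these are independent and the rest redundant, so that $V\cong\R^n_{a+b-c}$ locally. This requires (i) the combinatorial constraints on the correspondences $\Pi^f,\Pi^g$ between boundary components of $Z$ and of $X,Y$ that follow from transversality via Lemma \ref{mc6lem} --- in particular the dichotomy between equivalence classes with $\md{\Pi^f(E)}+\md{\Pi^g(E)}=\md{E}+1$ and those with $=\md{E}$, the latter forcing certain coordinates to vanish identically on $\hat V$; (ii) the observation that whenever a boundary defining function of $Z$ pulls back to boundary defining functions on both $X$ and $Y$, the two resulting inequalities agree on $\hat V$ and only one need be imposed; and (iii) a dimension count using the \emph{second} transversality condition $T_{z_0}S^c(Z)=\d f(T_{x_0}S^a(X))+\d g(T_{y_0}S^b(Y))$ to show the surviving $a+b-c$ inequalities are independent at $(0,0)$. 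Your phrase ``$k'$ determined by the combinatorics of the boundary correspondences'' gestures at this but gives neither the value $k'=a+b-c$ nor any argument that the surviving constraints are transverse, and without (iii) one cannot conclude that $V$ is a manifold with corners at all. Since your Part (3) also leans on this analysis (the list of boundary defining functions of $W$ as pullbacks of coordinates on $X$ or $Y$), the proposal as written does not yet constitute a proof.
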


We note one important special case of Theorem \ref{mc6thm1}, the
intersection of submanifolds. Suppose $X,Y$ are embedded
submanifolds of $Z$, with inclusions $i:X\hookra Z$ and $j:Y\hookra
Z$. Then we say that $X,Y$ {\it intersect transversely\/} if the
smooth embeddings $i,j$ are transverse. Then the fibre product
$W=X\t_ZY$ is just the intersection $X\cap Y$ in $Z$, and Theorem
\ref{mc6thm1} shows that it is also an embedded submanifold of $Z$.
If $f,g$ are not transverse, then a fibre product $X\t_{f,Y,g}Z$ may
or may not exist in the category $\Manc$. Even if one exists, from
the point of view of derived differential geometry \cite{Spiv}, it
is in some sense the `wrong answer'. Here are some examples.

\begin{ex}{\bf(a)} The inclusion $i:\{0\}\ra\R$ is not transverse to
itself. A fibre product $\{0\}\t_{i,\R,i}\{0\}$ does exist in
$\Manc$ in this case, the point $\{0\}$. Note however that it does
not have the expected dimension: $\{0\}\t_{\R}\{0\}$ has dimension
0, but Theorem \ref{mc6thm1} predicts the dimension
$\dim\{0\}+\dim\{0\}-\dim\R=-1$.
\smallskip

\noindent{\bf(b)} Consider the smooth maps $f:\R\ra\R^2$ and
$g:\R\ra\R^2$ given by
\begin{equation*}
f(x)=(x,0)\quad\text{and}\quad g(x,y)=\begin{cases} (x,e^{-x^2}\sin
(\pi/x)), & x\ne 0, \\
(0,0), & x=0. \end{cases}
\end{equation*}
These are not transverse at $f(0)=g(0)=(0,0)$. The fibre product
does not exist in $\Manc$. To see this, note that the topological
fibre product $\R\t_{f,\R^2,g}\R$ is $\{(1/n,0):0\ne
n\in\Z\}\cup\{(0,0)\}$, which has no manifold structure.
\label{mc6ex1}
\end{ex}

In the general case of Theorem \ref{mc6thm1}, the description of
$\pd W$ in terms of $\pd X,\pd Y,\pd Z$ is rather complicated, as
can be seen from the proof in \S\ref{mc8}. Here are three cases in
which the expression simplifies. The proofs follow from the proof of
Theorem \ref{mc6thm1} in \S\ref{mc8}, or alternatively from equation
\eq{mc6eq9} below with $i=1$, since $\pd W\cong C_1(W)$ and $f,g$
are strongly transverse in each case.

\begin{prop} Let\/ $X,Y$ be manifolds with corners, and\/ $f:X\ra
Y$ a submersion. Then there is a canonical diffeomorphism
\e
\pd_-^fX\cong X\t_{f,Y,i_Y}\pd Y,
\label{mc6eq2}
\e
which identifies the submersions $\smash{f_-:\pd_-^fX\ra\pd Y}$
and\/~$\pi_{\pd Y}:X\t_{Y}\pd Y\ra\pd Y$.
\label{mc6prop1}
\end{prop}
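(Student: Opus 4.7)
The plan is to exhibit the diffeomorphism as $\xi^f_-$ itself, then verify smoothness locally by reducing to the standard projection model for submersions from Proposition \ref{mc5prop1}. First I would note that the fibre product $X\t_{f,Y,i_Y}\pd Y$ exists: $f$ is transverse to $i_Y$ by Definition \ref{mc6def1}, since $f$ is a submersion, so Theorem \ref{mc6thm1} applies and gives the topological fibre product $\{(x,(y,\be))\in X\t\pd Y:f(x)=y\}$ a canonical structure of manifold with corners. As a topological space this coincides with $\Xi^f_-$, since a submersion is b-submersive and so $\Xi^f_+=\es$ by Lemma \ref{mc4lem}; the same lemma says $\xi^f_-:\Xi^f_-\ra\pd X$ is injective with image $\pd_-^fX$. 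Let $\Phi:=\xi^f_-:X\t_Y\pd Y\ra\pd_-^fX$ denote the resulting continuous bijection. The identification with $\pi_{\pd Y}$ is then immediate, since in the proof of Proposition \ref{mc5prop2} the map $f_-$ was defined as $\pi_{\pd Y}\ci(\xi^f_-)^{-1}$, so $f_-\ci\Phi\equiv\pi_{\pd Y}$.

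To see $\Phi$ is a diffeomorphism I would argue locally. By Proposition \ref{mc5prop1} I may assume $X=Y'\t Z'$, $Y=Y'$ and $f=\pi_{Y'}$. Under this identification, the topological fibre product becomes $(Y'\t Z')\t_{Y'}\pd Y'\cong\pd Y'\t Z'$ via $\bigl((y,z),(y,\be)\bigr)\mapsto\bigl((y,\be),z\bigr)$. By \eq{mc2eq5} we have $\pd(Y'\t Z')=(\pd Y'\t Z')\amalg(Y'\t\pd Z')$, and the local computation in the proof of Proposition \ref{mc5prop2} identifies the subset $\pd_-^f(Y'\t Z')$ with the first component $\pd Y'\t Z'$. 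Tracing through Definition \ref{mc4def1} then shows $\xi^f_-$ corresponds to the identity $\pd Y'\t Z'\ra\pd Y'\t Z'$ in this local picture.

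The main point that remains is to verify that the smooth structure on $(Y'\t Z')\t_{Y'}\pd Y'$ supplied by Theorem \ref{mc6thm1} coincides with the obvious product smooth structure on $\pd Y'\t Z'$. For this I would check directly that every product chart on $\pd Y'\t Z'$ satisfies the chart conditions of Theorem \ref{mc6thm1}: smoothness of $\pi_X\ci\phi$ and $\pi_{\pd Y}\ci\phi$ is clear from the product form, and the injectivity of \eq{mc6eq1} reduces to the fact that a product of charts on $\pd Y'$ and $Z'$ is a local diffeomorphism onto its image. Since both maximal atlases then contain this common family of product charts, they must coincide. This is really the only technical step; once it is in hand, $\Phi$ is locally a diffeomorphism and hence globally one, completing the proof.
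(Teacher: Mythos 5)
Your proof is correct, but it takes a genuinely different route from the one the paper indicates. The paper gives no standalone argument for Proposition \ref{mc6prop1}: it asserts that the result follows by tracing the explicit chart construction of the fibre product in the proof of Theorem \ref{mc6thm1} in \S\ref{mc8} (applied with $Z=Y$, $g=i_Y$, where the combinatorial data $P^f,P^g,\Pi^f,\Pi^g$ encode the boundary identification), or alternatively from equation \eq{mc6eq9} of Theorem \ref{mc6thm2} with $i=1$. You instead split the work into a topological step and a local smooth step: Lemma \ref{mc4lem} together with Propositions \ref{mc4prop1}(b) and \ref{mc4prop2} makes $\xi^f_-$ an injective covering map, hence a homeomorphism from $\Xi^f_-=X\t_Y\pd Y$ onto the open and closed subset $\pd_-^fX$ of $\pd X$, and the compatibility with $f_-$ and $\pi_{\pd Y}$ is immediate from the construction of $f_-$ in Proposition \ref{mc5prop2}; then Proposition \ref{mc5prop1} reduces the comparison of smooth structures to the model $f=\pi_{Y'}:Y'\t Z'\ra Y'$, where the explicit description of the maximal atlas in Theorem \ref{mc6thm1} shows that the fibre-product atlas contains the product atlas of $\pd Y'\t Z'$ (injectivity of \eq{mc6eq1} already follows from $\d(\pi_X\ci\phi)$ being injective, since $i_{Y'}$ is an immersion), so the two maximal atlases coincide. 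This buys a proof that avoids the combinatorial bookkeeping of \S\ref{mc8} entirely, at the cost of invoking the normal form for submersions; both routes ultimately rest on the chart characterization in Theorem \ref{mc6thm1}. The only point worth making explicit is that this chart condition is local, so the fibre-product structure on $X\t_Y\pd Y$ restricts over the open sets $X'\cong Y'\t Z'$ to the fibre-product structure of the local model; granted that, patching your local diffeomorphisms into a global one is unproblematic.
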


\begin{prop} Let\/ $X,Y$ be manifolds with corners, $Z$ a manifold
without boundary, and\/ $f:X\ra Z,$ $g:Y\ra Z$ be transverse smooth
maps. Then $f\ci i_X:\pd X\ra Z,$ $g:Y\ra Z$ are transverse, and\/
$f:X\ra Z,$ $g\ci i_Y:\pd Y\ra Z$ are transverse, and there is a
canonical diffeomorphism
\e
\pd\bigl(X\t_{f,Z,g}Y\bigr)\cong \bigl(\pd X\t_{f\ci i_X,Z,g}Y\bigr)
\amalg \bigl(X\t_{f,Z,g\ci i_Y}\pd Y\bigr).
\label{mc6eq3}
\e
\label{mc6prop2}
\end{prop}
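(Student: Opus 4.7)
The plan is to verify transversality of the boundary maps first, then construct the diffeomorphism using the chart structure from Theorem \ref{mc6thm1}.

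For transversality, the key simplification is that $\pd Z=\es$ forces $z\in S^0(Z)=Z$, hence $l=0$ and $T_z(S^l(Z))=T_zZ$, for any $z=f(x)=g(y)$. In Definition \ref{mc6def1} the two transversality requirements for $f,g$ therefore collapse to the interior condition $T_zZ=\d f\vert_x(T_xX)+\d g\vert_y(T_yY)$ and the stratum condition $T_zZ=\d f\vert_x(T_x(S^j(X)))+\d g\vert_y(T_y(S^k(Y)))$. To check $f\ci i_X:\pd X\ra Z$ and $g:Y\ra Z$ are transverse at $\bigl((x,\ti\be),y\bigr)$ with $x\in S^j(X)$, $j\ge 1$: the map $\d i_X\vert_{(x,\ti\be)}$ identifies $T_{(x,\ti\be)}(S^{j-1}(\pd X))$ with $T_x(S^j(X))$, and embeds $T_{(x,\ti\be)}(\pd X)$ as a codimension-one subspace of $T_xX$ containing $T_x(S^j(X))$. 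Both transversality conditions for $f\ci i_X,g$ then reduce directly to the stratum condition for $f,g$, which holds by hypothesis. The case $f,g\ci i_Y$ is symmetric.

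For the diffeomorphism, Theorem \ref{mc6thm1} produces $W:=X\t_ZY$, $W_1:=\pd X\t_ZY$ and $W_2:=X\t_Z\pd Y$ as manifolds with corners. Define $\al:W_1\amalg W_2\ra\pd W$ by sending $\bigl((x,\ti\be),y\bigr)\in W_1$ to $\bigl((x,y),\ga_{\ti\be}\bigr)$, where $\ga_{\ti\be}$ is the local boundary component of $W$ at $(x,y)$ determined by pullback: if $(V,b)$ is a boundary defining function for $X$ at $(x,\ti\be)$, the pair $(\pi_X^{-1}(V),b\ci\pi_X)$ is a boundary defining function for $W$ at $\bigl((x,y),\ga_{\ti\be}\bigr)$, and this uniquely pins down $\ga_{\ti\be}$. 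The definition on $W_2$ is symmetric. To see $\al$ is a bijection and smooth in both directions, inspect local charts on $W$ near $(x,y)$ with $x\in S^j(X)$, $y\in S^k(Y)$: by Theorem \ref{mc6thm1} one can arrange $W$-coordinates valued in $\R^{\dim W}_{j+k}$ whose $j$ boundary coordinates $u_i$ are pullbacks via $\pi_X$ of boundary defining coordinates on $X$ at $x$, and whose $k$ boundary coordinates $v_l$ are pullbacks via $\pi_Y$ from $Y$ at $y$. The $j+k$ local boundary components of $W$ at $(x,y)$ are then exactly $\{u_i=0\}$ and $\{v_l=0\}$, matching the fibres over $(x,y)$ of $W_1\ra W$ and $W_2\ra W$; the boundary charts on $\pd W$ obtained from \eq{mc2eq2} agree coordinate-for-coordinate with those on $W_1$ and $W_2$ obtained from \eq{mc2eq2} plus Theorem \ref{mc6thm1}, so $\al$ is a diffeomorphism.

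The main obstacle is the chart-splitting assertion just used: that near any point of $W$ one can really decompose the boundary-valued local coordinates into those pulled back from $X$ via $\pi_X$ and those pulled back from $Y$ via $\pi_Y$, with no further identification forced between them. This comes out of the explicit chart construction in the proof of Theorem \ref{mc6thm1} (deferred to \S\ref{mc8}), and is precisely where the assumption $\pd Z=\es$ is essential: when $\pd Z\ne\es$, boundary defining functions on $Z$ can pull back along $f$ and $g$ to relate $u_i$ with $v_l$, which is the subtler bookkeeping carried by $\Xi^f_\pm,\xi^f_\pm$ in \S\ref{mc4}.
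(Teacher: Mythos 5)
Your proposal is correct and follows essentially the same route as the paper, which proves this proposition by appeal to the explicit chart construction in the proof of Theorem \ref{mc6thm1}: when $\pd Z=\es$ one has $c=0$ and $P^f=P^g=\es$ in the notation of \S\ref{mc8}, so the local model for $W$ is cut out by transverse inequalities pulled back separately from $X$ and $Y$, which is exactly your chart-splitting claim. (The paper also notes an alternative derivation from \eq{mc6eq9} with $i=1$, since transverse $f,g$ with $\pd Z=\es$ are automatically strongly transverse, but your direct argument matches the paper's primary route.)
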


\begin{prop} Let\/ $X,Y,Z$ be manifolds with corners, $f:X\ra
Z$ a submersion and\/ $g:Y\ra Z$ smooth. Then there is a canonical
diffeomorphism
\e
\pd\bigl(X\t_{f,Z,g}Y\bigr)\cong \bigl(\pd_+^fX \t_{f_+,Z,g}Y\bigr)
\amalg \bigl(X\t_{f,Z,g\ci i_Y}\pd Y\bigr).
\label{mc6eq4}
\e
If both\/ $f,g$ are submersions there is also a canonical
diffeomorphism
\e
\begin{split}
\pd\bigl(&X\t_{f,Z,g}Y\bigr)\cong \\
&\bigl(\pd_+^fX \t_{f_+,Z,g}Y\bigr)\amalg \bigl(X \t_{f,Z,g_+}
\pd_+^gY\bigr)\amalg\bigl(\pd_-^fX\t_{f_-,\pd Z,g_-}\pd_-^gY\bigr).
\end{split}
\label{mc6eq5}
\e
Equation \eq{mc6eq4} also holds if\/ $f,g$ are transverse and $f$ is
b-submersive, and\/ \eq{mc6eq5} also holds if\/ $f,g$ are transverse
and both are b-submersive.
\label{mc6prop3}
\end{prop}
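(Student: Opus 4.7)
The plan is to reduce to the local product model for submersions from Proposition \ref{mc5prop1} and then apply the boundary formula for products, Proposition \ref{mc2prop4}. The key global ingredients are the decomposition $\pd X=\pd_+^fX\amalg\pd_-^fX$ of Proposition \ref{mc4prop2} and the canonical identification $\pd_-^fX\cong X\t_{f,Z,i_Z}\pd Z$ from Proposition \ref{mc6prop1}.

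First, for \eq{mc6eq4}, I would fix $(x,y)\in W=X\t_{f,Z,g}Y$ with $f(x)=g(y)=z$. Proposition \ref{mc5prop1} supplies open neighbourhoods $X'\ni x$, $Z'\ni z$ and a manifold with corners $Z''$ together with a diffeomorphism $X'\cong Z'\t Z''$ identifying $f\vert_{X'}$ with $\pi_{Z'}$. Then $W$ is locally diffeomorphic to $Z''\t g^{-1}(Z')$, and Proposition \ref{mc2prop4} yields
\begin{equation*}
\pd W\cong\bigl(\pd Z''\t g^{-1}(Z')\bigr)\amalg\bigl(Z''\t\pd(g^{-1}(Z'))\bigr).
\end{equation*}
Under $X'\cong Z'\t Z''$, the component $Z'\t\pd Z''$ of $\pd X'$ consists of boundary components intrinsic to $X$, not arising as pullbacks of boundary components of $Z$, so it coincides with $\pd_+^fX\cap i_X^{-1}(X')$; similarly $\pd Z'\t Z''$ is $\pd_-^fX\cap i_X^{-1}(X')$. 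Hence the two summands above are canonically open subsets of $\pd_+^fX\t_{f_+,Z,g}Y$ and $X\t_{f,Z,g\ci i_Y}\pd Y$ respectively; both global fibre products exist by Theorem \ref{mc6thm1}, since $f_+$ and $f$ are submersions by Proposition \ref{mc5prop2}. The resulting local diffeomorphisms are independent of the choice of chart in Proposition \ref{mc5prop1}, so they glue to the asserted global diffeomorphism \eq{mc6eq4}.

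The proof of \eq{mc6eq5} with both $f,g$ submersions is entirely parallel: local models $X'\cong Z'\t Z''$ and $Y'\cong Z'\t Z'''$ give $W$ locally diffeomorphic to $Z'\t Z''\t Z'''$, and Proposition \ref{mc2prop4} produces three boundary pieces. Two of them match $\pd_+^fX\t_{f_+,Z,g}Y$ and $X\t_{f,Z,g_+}\pd_+^gY$ as before; the third, $\pd Z'\t Z''\t Z'''$, is identified with $\pd_-^fX\t_{f_-,\pd Z,g_-}\pd_-^gY$ by combining the Proposition \ref{mc6prop1} identifications $\pd_-^fX\cong X\t_Z\pd Z$ and $\pd_-^gY\cong Y\t_Z\pd Z$ and noting that $(X\t_Z\pd Z)\t_{\pd Z}(Y\t_Z\pd Z)\cong X\t_Z\pd Z\t_Z Y$ locally reduces to the claimed $\pd Z'\t Z''\t Z'''$.

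The main obstacle is the b-submersive case, since Proposition \ref{mc5prop1} genuinely uses the full submersion hypothesis. For this I would appeal instead to the general corner formula \eq{mc6eq9}, specialised to $i=1$ via $\pd W\cong C_1(W)$. Lemma \ref{mc4lem} characterises b-submersivity of $f$ as injectivity of $\xi^f_-:\Xi^f_-\ra\pd X$, and this injectivity is precisely what collapses the sum over corner strata on the right-hand side of \eq{mc6eq9} into the two (respectively three) disjoint terms of \eq{mc6eq4} (respectively \eq{mc6eq5}). Transversality guarantees that the fibre products on the right are themselves manifolds with corners by Theorem \ref{mc6thm1}, and the remaining bookkeeping of which corner strata contribute to which piece is routine.
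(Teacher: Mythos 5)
Your proposal is correct, and it essentially fleshes out the two routes the paper itself offers: the paper's entire justification of Propositions \ref{mc6prop1}--\ref{mc6prop3} is the remark that they follow either from the proof of Theorem \ref{mc6thm1} in \S\ref{mc8} or from \eq{mc6eq9} with $i=1$, using $\pd W\cong C_1(W)$. Your handling of the b-submersive cases is exactly the second route, and your handling of the submersion cases via the normal form of Proposition \ref{mc5prop1} together with \eq{mc2eq5} is a clean, more elementary substitute for the first route (the \S\ref{mc8} argument works with general charts and boundary defining functions rather than the projection model); it parallels the paper's own proof of Proposition \ref{mc5prop2}, and the chart-independence you assert is justified because the underlying bijection is determined canonically by which local boundary components of $W$ at $(x,y)$ arise from $\pd_+^fX$ at $x$ and which from $\pd Y$ at $y$.

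Two points should be made explicit to complete the b-submersive case. First, \eq{mc6eq9} requires $f,g$ to be \emph{strongly} transverse; this does hold when $f,g$ are transverse and $f$ is b-submersive, by the argument following \eq{mc6eq7} in Definition \ref{mc6def2} with Theorem \ref{mc4thm}(vii) supplying $j\ge l$, but it is a hypothesis you must check before citing Theorem \ref{mc6thm2}. Second, the collapse of the $i=1$ sum is a little more than routine bookkeeping: the summand $X\t_{f,Z,g\ci i_Y}\pd Y$ of \eq{mc6eq4} is assembled from \emph{all} the terms of \eq{mc6eq9} with $k=1$ and $j=l\ge 0$, using that b-submersivity of $f$ (via Lemma \ref{mc4lem}) guarantees that for each $x\in X$ and each $(z,L)\in C_l(Z)$ with $f(x)=z$ there is a unique point of $C_l(X)$ over $x$ mapping to $(z,L)$ under $C(f)$; the terms with $k\ge 2$ are empty because $i=1$ forces $j=1+l-k<l$, contradicting $l\le j$. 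With these additions the plan is complete.
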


We will also discuss a stronger notion of transversality. To
introduce it we prove the following lemma.

\begin{lem} Let\/ $X,Y,Z$ be manifolds with corners, $f:X\ra Z$
and\/ $g:Y\ra Z$ be transverse smooth maps, and\/ $C(f),C(g)$ be as
in \eq{mc3eq2}. Suppose $(x,\{\be_1,\ldots,\be_j\})\in C_j(X)$ and\/
$(y,\{\ti\be_1,\ldots,\ti\be_k\})\in C_k(Y)$ with\/ $C(f)
(x,\{\be_1,\ab\ldots,\ab\be_j\})\ab=C(g)(y,\{\ti\be_1,\ldots,
\ti\be_k\})= (z,\{\dot\be_1,\ldots,\dot\be_l\})$ in\/ $C_l(Z)$.
Then\/~$j+k\ge l$.
\label{mc6lem}
\end{lem}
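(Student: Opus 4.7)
My plan is to extract from the hypothesis enough boundary defining functions on $Z$ near $z$, pull them back through $f$ and $g$, and then apply the $f$--$g$ transversality to a linear map $\d F\vert_z:T_zZ\ra\R^l$ that I will force to be surjective. The inequality $l\le j+k$ will then follow from a rank count.

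In detail, I would first use Proposition \ref{mc2prop6}(d) to pick a chart $(V,\psi)$ on $Z$ at $z$ with $V\subseteq\R^n_{l'}$ open, $\psi(0)=z$, and $\dot\be_m$ realised by the coordinate hyperplane $\{y_{i_m}=0\}$ for distinct $i_1,\ldots,i_l$; setting $b_m=y_{i_m}\ci\psi^{-1}$ then gives a boundary defining function for $Z$ at $(z,\dot\be_m)$ on the common neighbourhood $\psi(V)$. By the formula \eq{mc3eq2} for $C(f)$ and Definition \ref{mc4def1} of $\Xi^f_-$, the hypothesis $C(f)(x,\{\be_1,\ldots,\be_j\})=(z,\{\dot\be_1,\ldots,\dot\be_l\})$ forces $(f^{-1}(\psi(V)),b_m\ci f)$ to be a boundary defining function for $X$ at $(x,\be_{\si(m)})$ for some $\si(m)\in\{1,\ldots,j\}$. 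Fixing boundary defining functions $a_r$ for $X$ at $(x,\be_r)$ for $r=1,\ldots,j$, Proposition \ref{mc2prop6}(b) then yields smooth $g_m>0$ near $x$ with $b_m\ci f\equiv a_{\si(m)}\cdot g_m$. The same reasoning applied to $C(g)$ produces $\tau:\{1,\ldots,l\}\ra\{1,\ldots,k\}$, boundary defining functions $c_s$ for $Y$ at $(y,\ti\be_s)$, and smooth $h_m>0$ with $b_m\ci g\equiv c_{\tau(m)}\cdot h_m$.

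With this setup, I assemble $F=(b_1,\ldots,b_l):\psi(V)\ra\R^l$. The cotangent vectors $\d b_m\vert_z$ are linearly independent in $T^*_zZ$ since $y_{i_1},\ldots,y_{i_l}$ are distinct coordinates on $\R^n$, so $\d F\vert_z:T_zZ\ra\R^l$ is surjective. On the other hand, $a_{\si(m)}(x)=0$ together with the Leibniz rule gives that the $m$-th component of $\d(F\ci f)\vert_x$ equals $g_m(x)\,\d a_{\si(m)}\vert_x$, so $\d(F\ci f)\vert_x$ factors as $T_xX\ra\R^j\ra\R^l$, where the first map is $v\mapsto(\d a_1\vert_x(v),\ldots,\d a_j\vert_x(v))$ and the second is linear; consequently $\d(F\ci f)\vert_x(T_xX)$ has dimension at most $j$, and symmetrically $\d(F\ci g)\vert_y(T_yY)$ has dimension at most $k$.

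To finish, I apply $\d F\vert_z$ to the transversality identity $T_zZ=\d f\vert_x(T_xX)+\d g\vert_y(T_yY)$ from Definition \ref{mc6def1} to obtain
\[
\R^l=\d F\vert_z(T_zZ)=\d(F\ci f)\vert_x(T_xX)+\d(F\ci g)\vert_y(T_yY),
\]
whose right-hand side has dimension at most $j+k$, giving $l\le j+k$. The main subtlety I anticipate is organisational: arranging all the $b_m$ as boundary defining functions in a single chart so that their differentials at $z$ are simultaneously independent and so that they land in the correct components of $\Xi^f_-$ and $\Xi^g_-$; once this bookkeeping is in place, the argument is just a rank count and needs neither the second (small-stratum) transversality condition of Definition \ref{mc6def1} nor any further structure theory of smooth maps.
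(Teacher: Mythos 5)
Your proof is correct and is essentially the paper's argument in dual, coordinate form: your surjection $\d F\vert_z:T_zZ\ra\R^l$ is precisely the quotient $T_zZ\ra T_zZ/(T_z\dot\be_1\cap\cdots\cap T_z\dot\be_l)$ used in the paper, and your factorizations of $\d(F\ci f)\vert_x$ and $\d(F\ci g)\vert_y$ through $\R^j$ and $\R^k$ correspond to the paper's induced maps on the $j$- and $k$-dimensional quotients of $T_xX$ and $T_yY$. Both proofs then conclude by the same rank count from the first transversality condition of Definition \ref{mc6def1}.
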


\begin{proof} Since $C(f)(x,\{\be_1,\ldots,\be_j\})=
(z,\{\dot\be_1,\ldots,\dot\be_l\})$ it follows that $\d f\vert_x$
maps the vector subspace $T_x\be_1\cap\cdots\cap T_x\be_j$ in $T_xX$
to the vector subspace $T_z\dot\be_1\cap\cdots\cap T_z\dot\be_l$ in
$T_zZ$, as the restriction of $\d f\vert_x$ to $T_x\be_1\cap
\cdots\cap T_x\be_j$ is naturally identified with $\d
C(f)\vert_{(x,\{\be_1,\ldots,\be_j\})}$. Similarly, $\d g\vert_y$
maps $T_y\ti\be_1\cap\cdots\cap T_y\ti\be_k$ in $T_yY$ to
$T_z\dot\be_1\cap\cdots\cap T_z\dot\be_l$ in $T_zZ$. Since $f,g$ are
transverse, we have $T_zZ=\d f\vert_x(T_xX)+\d g\vert_y(T_yY)$.
Passing to the quotients $T_xX/(T_x\be_1\cap\cdots\cap
T_x\be_j),\ab\ldots,T_zZ/(T_z\dot\be_1\cap\cdots\cap T_z\dot\be_l)$
and using the facts above shows that
\e
\begin{split}
(\d f\vert_x)_*\bigl(T_xX/(T_x\be_1\cap\cdots\cap T_x\be_j)\bigr)+
(\d g\vert_y)_*\bigl(T_yY/(T_y\ti\be_1\cap\cdots\cap
T_y\ti\be_k)\bigr)&\\
= T_zZ/(T_z\dot\be_1\cap\cdots\cap T_z\dot\be_l)&.
\end{split}
\label{mc6eq6}
\e
As the vector spaces in \eq{mc6eq6} have dimensions $j,k,l$, it
follows that~$j+k\ge l$.
\end{proof}

\begin{dfn} Let $X,Y,Z$ be manifolds with corners and $f:X\ra
Z$, $g:Y\ra Z$ be smooth maps. We call $f,g$ {\it strongly
transverse\/} if they are transverse, and whenever there are points
in $C_j(X),C_k(Y),C_l(Z)$ with
\e
C(f)(x,\{\be_1,\ldots,\be_j\})=C(g)(y,\{\ti\be_1,\ldots,
\ti\be_k\})=(z,\{\dot\be_1,\ldots,\dot\be_l\})
\label{mc6eq7}
\e
we have either $j+k>l$ or $j=k=l=0$. That is, in Lemma \ref{mc6lem},
equality in $j+k\ge l$ is allowed only if~$j=k=l=0$.

Suppose $f,g$ are smooth with $f$ a submersion. Then $f,g$ are
automatically transverse, as in Definition \ref{mc6def1}, and in
\eq{mc6eq7}, Theorem \ref{mc4thm}(v) implies that $j\ge l$. Hence if
$k>0$ then $j+k>l$. If $k=0$ then $l=0$ as $C(g)$ maps $C_0(Y)\ra
C_0(Z)$, so either $j+k>l$ or $j=k=l=0$. So $f,g$ are strongly
transverse. Also $f,g$ are strongly transverse if $f,g$ are smooth
with $g$ a submersion, or if $f,g$ are transverse and~$\pd Z=\es$.
\label{mc6def2}
\end{dfn}

In the situation of Theorem \ref{mc6thm1} we have a Cartesian square
\e
\begin{gathered}
\xymatrix@R=10pt{ W \ar[r]_{\pi_Y} \ar[d]^{\pi_X} & Y \ar[d]_g
\\ X \ar[r]^f & Z,}
\end{gathered}
\quad \quad
\begin{aligned}
&\text{which induces}\\
&\text{a commutative}\\
&\text{square}
\end{aligned}
\quad
\begin{gathered}
\xymatrix@R=10pt{ \coprod_{i\ge 0}C_i(W) \ar[r]_{C(\pi_Y)}
\ar[d]^{C(\pi_X)} & \coprod_{k\ge 0}C_k(Y) \ar[d]_{C(g)} \\
\coprod_{j\ge 0}C_j(X) \ar[r]^{C(f)} & \coprod_{l\ge 0}C_l(Z).}
\end{gathered}
\label{mc6eq8}
\e
Since as in Theorem \ref{mc4thm} the transformation $X\mapsto
\coprod_{i\ge 0}C_i(X)$, $f\mapsto C(f)$ has very good functorial
properties, it is natural to wonder whether the right hand square in
\eq{mc6eq8} is also Cartesian. The answer is yes if and only if
$f,g$ are strongly transverse. The following theorem will be proved
in~\S\ref{mc9}.

\begin{thm} Let\/ $X,Y,Z$ be manifolds with corners, and\/ $f:X\ra
Z,$ $g:Y\ra Z$ be strongly transverse smooth maps, and write\/ $W$
for the fibre product\/ $X\t_{f,Z,g}Y$ in Theorem\/
{\rm\ref{mc6thm1}}. Then there is a canonical diffeomorphism
\e
\begin{gathered}
C_i(W)\cong \coprod_{\begin{subarray}{l}j,k,l\ge 0:\\
i=j+k-l\end{subarray}}
\begin{aligned}[t]
\bigl(C_j(X)\cap C(f)^{-1}(C_l(Z))\bigr) \t_{C(f),C_l(Z),C(g)}&\\
\bigl(C_k(Y)\cap C(g)^{-1}(C_l(Z))\bigr)&
\end{aligned}
\end{gathered}
\label{mc6eq9}
\e
for all\/ $i\ge 0,$ where the fibre products are all transverse and
so exist. Hence
\e
\ts \coprod_{i\ge 0}C_i(W)\cong \coprod_{j\ge
0}C_j(X)\t_{C(f),\coprod_{l\ge 0}C_l(Z),C(g)} \coprod_{k\ge
0}C_k(Y).
\label{mc6eq10}
\e
Here the right hand commutative square in\/ \eq{mc6eq8} induces a
map from the left hand side of\/ \eq{mc6eq10} to the right hand
side, which gives the identification\/~\eq{mc6eq10}.
\label{mc6thm2}
\end{thm}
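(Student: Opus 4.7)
The plan is to prove (\ref{mc6eq9}) by a local analysis at each point $(x,y)\in W,$ and then take disjoint unions to deduce (\ref{mc6eq10}). Fix $(x,y)\in W$ with $z=f(x)=g(y),$ and set $j_0=\depth_Xx,$ $k_0=\depth_Yy,$ $l_0=\depth_Zz.$ Enumerate local boundary components as $\be_1,\ldots,\be_{j_0}$ of $X$ at $x,$ $\ti\be_1,\ldots,\ti\be_{k_0}$ of $Y$ at $y,$ $\dot\be_1,\ldots,\dot\be_{l_0}$ of $Z$ at $z,$ and choose boundary defining functions (BDFs) $b_r$ for $Z$ at each $(z,\dot\be_r).$ By Definition \ref{mc3def1} and Proposition \ref{mc2prop6}(b),(d), each $b_r\ci f$ is either identically $0$ near $x$ (i.e.\ $(x,(z,\dot\be_r))\in\Xi^f_+$) or equals $g_r\cdot b^X_{\si(r)}$ near $x$ for a uniquely determined $\si(r)\in\{1,\ldots,j_0\}$ and positive smooth $g_r,$ where $b^X_m$ is a BDF at $(x,\be_m).$ Let $R_f\subseteq\{1,\ldots,l_0\}$ collect the $r$ of the second type, giving $\si:R_f\ra\{1,\ldots,j_0\};$ define $R_g\subseteq\{1,\ldots,l_0\}$ and $\tau:R_g\ra\{1,\ldots,k_0\}$ symmetrically for $g.$ A direct tangent-space computation shows that transversality of $f,g$ forces $R_f\cup R_g=\{1,\ldots,l_0\}.$

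Next I would read off the local boundary components of $W$ at $(x,y)$ from its defining equations. Locally $W$ is cut out of $X\t Y$ by $b_r\ci f\ci\pi_X=b_r\ci g\ci\pi_Y$ for $r=1,\ldots,l_0$ together with transverse interior equations. For $r\in R_f\cap R_g$ the equation becomes $(b^X_{\si(r)}\ci\pi_X)\cdot g_r=(b^Y_{\tau(r)}\ci\pi_Y)\cdot\ti g_r$ on $W,$ so the two pullbacks agree up to a positive smooth factor and define a single common local BDF of $W.$ For $r\in R_f\sm R_g$ the equation forces $b^X_{\si(r)}\ci\pi_X\equiv 0$ on $W,$ absorbing the component $\be_{\si(r)}$ into the interior of $W;$ symmetrically for $r\in R_g\sm R_f.$ The remaining pullbacks $b^X_m\ci\pi_X$ for $m\notin\si(R_f)$ and $b^Y_n\ci\pi_Y$ for $n\notin\tau(R_g)$ restrict to independent BDFs of $W$ at $(x,y).$ Thus the local boundary components of $W$ at $(x,y)$ are canonically in bijection with $\bigl(\{1,\ldots,j_0\}\sm\si(R_f)\bigr)\amalg\bigl(\{1,\ldots,k_0\}\sm\tau(R_g)\bigr)\amalg(R_f\cap R_g).$

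Third, I would assemble the bijection with the right-hand side of (\ref{mc6eq9}). An $i$-corner of $W$ at $(x,y)$ is a choice of $i$ such BDFs, decomposing uniquely as a triple $(S_0,T_0,V)$ with $S_0\subseteq\{1,\ldots,j_0\}\sm\si(R_f),$ $T_0\subseteq\{1,\ldots,k_0\}\sm\tau(R_g),$ $V\subseteq R_f\cap R_g,$ and $|S_0|+|T_0|+|V|=i.$ Setting $S=S_0\cup\si(V),$ $T=T_0\cup\tau(V),$ $U=V,$ one reads off $(x,S)\in C_j(X),$ $(y,T)\in C_k(Y),$ $(z,U)\in C_l(Z)$ with $C(f)(x,S)=(z,U)=C(g)(y,T),$ $j=|S|,$ $k=|T|,$ $l=|U|,$ $i=j+k-l;$ the inverse assembly is $(S_0,T_0,V)=(S\sm\si(U),T\sm\tau(U),U).$ Smoothness of both directions is immediate from the explicit coordinate model. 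Strong transversality (Definition \ref{mc6def2}) enters here precisely to guarantee that for $i=0$ only the trivial triple $(\es,\es,\es)$ can contribute, giving $C_0(W)=W$ cleanly; without it, triples with $j+k=l>0$ would yield spurious components in (\ref{mc6eq9}) for $i=0.$

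Finally, I would verify that the fibre products on the right-hand side of (\ref{mc6eq9}) are transverse in the sense of Definition \ref{mc6def1}, so they exist by Theorem \ref{mc6thm1}. At a common image $(z,U)\in C_l(Z),$ tangent spaces of corners identify with intersections of the form $\bigcap_{\be\in U}T_z\be\subseteq T_zZ$ and analogously for $X,Y;$ the two-tier transversality condition on the restrictions of $C(f),C(g)$ to these subspaces follows from both the top-stratum and bottom-stratum clauses of Definition \ref{mc6def1}. Equation (\ref{mc6eq10}) then follows by taking $\coprod_{i\ge 0}$ of (\ref{mc6eq9}) and reindexing. I expect the main obstacle to be the bookkeeping in the local model, particularly handling the case when $\si,\tau$ are non-injective (several boundary components of $Z$ pulling back via $f$ or $g$ to the same boundary component of $X$ or $Y$), together with verifying transversality at \emph{intermediate} corner strata, a condition (as Remark \ref{mc6rem1} points out) implied by our definition of smooth map but requiring care to extract.
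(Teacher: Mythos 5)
Your overall strategy is the same as the paper's: work locally at $(x,y)\in W$, encode how $f,g$ pull back boundary defining functions of $Z$ via subsets $R_f,R_g\subseteq\{1,\ldots,l_0\}$ and maps $\si,\tau$ (the paper's $P^f,P^g,\Pi^f,\Pi^g$), deduce $R_f\cup R_g=\{1,\ldots,l_0\}$ from transversality, and match local boundary components of $W$ against points of the right-hand side of \eq{mc6eq9}. But there is a genuine error in your combinatorial dictionary, exactly at the point you flag as ``requiring care'': when $\si$ or $\tau$ fails to be injective on $R_f\cap R_g$, your third piece is wrong. Distinct $r,r'\in R_f\cap R_g$ with $\si(r)=\si(r')$ produce boundary defining functions of $W$ that are positive multiples of the same function $b^X_{\si(r)}\ci\pi_X$, hence define the \emph{same} local boundary component of $W$, not two. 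The correct index set for the third piece is the set of equivalence classes of the relation on $R_f\cap R_g$ generated by $r\approx r'$ whenever $\si(r)=\si(r')$ or $\tau(r)=\tau(r')$; strong transversality forces each class $E$ to satisfy $\md{\si(E)}+\md{\tau(E)}=\md{E}+1$, which is precisely what makes $i=j+k-l$ come out right (each class contributes $1$ to $i$, $\md{\si(E)}$ to $j$, $\md{\tau(E)}$ to $k$ and $\md{E}$ to $l$). Correspondingly, in your triple $(S_0,T_0,V)$ the set $V$ must be a union of $\approx$-classes, not an arbitrary subset of $R_f\cap R_g$.

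A concrete counterexample to your count: take $X=[0,\iy)$, $Y=Z=[0,\iy)^2$, $f(x)=(x,x)$, $g=\id$. These are strongly transverse since $g$ is a submersion, $W\cong[0,\iy)$, and $W$ has exactly one local boundary component over $(0,(0,0))$; but $R_f\cap R_g=\{1,2\}$ with $\si(1)=\si(2)=1$, so your formula predicts two. Your forward map also fails here: with $V=\{1\}$ you set $S=\{1\}$, $U=\{1\}$, yet $C(f)(x,\{\be_1\})=(z,\{\dot\be_1,\dot\be_2\})\ne(z,U)$, because by \eq{mc8eq1} the image of $C(f)$ is automatically saturated under the fibres of $\si$, i.e.\ a union of $\approx$-classes. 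Once the third piece is replaced by the set of $\approx$-classes (the paper's representative set $Q$), the rest of your argument --- including the role of strong transversality in excluding classes with $\md{\si(E)}+\md{\tau(E)}=\md{E}$, which would otherwise contribute spurious $i=0$ points --- goes through and reproduces the paper's proof. Note also that the paper spends real effort on what you dismiss as ``immediate'': showing the bijection is a diffeomorphism by identifying $T C_i(W)$ with the kernel of $\d C(f)\op-\d C(g)$ inside $TC_j(X)\op TC_k(Y)$, and deriving transversality of the corner fibre products from that identification together with a dimension count; your sketch of these steps is plausible but would need the same care.
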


Suppose $f:X\ra Z$ and $g:Y\ra Z$ are transverse, but not strongly
transverse. Then by Definition \ref{mc6def2} there exist points in
$C_j(X),C_k(Y),C_l(Z)$ satisfying \eq{mc6eq7} with $j+k=l$ but
$j,k,l$ not all zero. These give a point in the right hand side of
\eq{mc6eq9} with $i=0$ which does not lie in the image of $C_0(W)$
under the natural map, since $C_0(W)$ maps to $C_0(X),C_0(Y)$ and so
cannot map to $C_j(X),C_k(Y)$ as $j,k$ are not both zero. Thus
\eq{mc6eq9} and \eq{mc6eq10} are false if $f,g$ are transverse but
not strongly transverse.

Here is an example of $f,g$ which are transverse but not strongly
transverse.

\begin{ex} Define smooth maps $f:[0,\iy)\ra[0,\iy)^2$ by
$f(x)=(x,2x)$ and $g:[0,\iy)\ra[0,\iy)^2$ by $g(y)=(2y,y)$. Then
$f(0)=g(0)=(0,0)$. We have
\begin{gather*}
\d f\vert_0\bigl(T_0[0,\iy)\bigr)+\d g\vert_0\bigl(T_0[0,\iy)\bigr)=
\an{(1,2)}_\R+\an{(2,1)}_\R=\R^2=T_{(0,0)}[0,\iy)^2,\\
\d f\vert_0\bigl(T_0\bigl(S^0([0,\iy))\bigr)\bigr) +\d
g\vert_0\bigl(T_0\bigl(S^0([0,\iy))\bigr)\bigr)=\{0\}=
T_{(0,0)}\bigl(S^0([0,\iy)^2)\bigr),
\end{gather*}
so $f,g$ are transverse. However we have
\begin{equation*}
C(f)\bigl(0,\bigl\{\{x=0\}\bigr\}\bigr)=C(g)\bigl(0,\bigl\{\{y=0\}
\bigr\})=\bigl((0,0),\bigl\{\{x=0\},\{y=0\}\bigr\}\bigr),
\end{equation*}
with $j=k=1$ and $l=2$, so $f,g$ are not strongly transverse. The
fibre product $W=[0,\iy)_{f,[0,\iy)^2,g}[0,\iy)$ is a single point
$\{0\}$. In \eq{mc6eq9} when $i=0$ the l.h.s.\ is one point, and the
r.h.s.\ is two points, one from $j\!=\!k\!=\!l\!=\!0$ and one from
$j\!=\!k\!=\!1$, $l\!=\!2$, so \eq{mc6eq9} does not hold. For
$i\!\ne\! 0$, both sides of \eq{mc6eq9} are empty.
\label{mc6ex2}
\end{ex}

The distinction between transversality and strong transversality
will be important in \cite{Joyc}. There we will define a 2-category
$\dManc$ of {\it d-manifolds with corners}, a `derived'
generalization of manifolds with corners, which contains the
1-category $\Manc$ of manifolds with corners as a full discrete
2-subcategory. If $X,Y,Z$ are manifolds with corners and $f:X\ra Z$,
$g:Y\ra Z$ are smooth then a 2-category fibre product
$(X\t_ZY)_\dManc$ exists in $\dManc$. If $f,g$ are transverse then a
1-category fibre product $(X\t_ZY)_\Manc$ also exists in
$\Manc\subset\dManc$ by Theorem \ref{mc6thm1}. However,
$(X\t_ZY)_\dManc$ and $(X\t_ZY)_\Manc$ coincide if and only if $f,g$
are strongly transverse.

\section{Orientations and orientation conventions}
\label{mc7}

Orientations are discussed in \cite[\S I.1]{KoNo} and~\cite[\S
VIII.3]{Lang}.

\begin{dfn} Let $X$ be an $n$-manifold and $E\ra X$ a vector bundle of
rank $k$. The {\it frame bundle\/} $F(E)$ is
\begin{equation*}
F(E)=\bigl\{(x,e_1,\ldots,e_k):\text{$x\in X$, $(e_1,\ldots,e_k)$ is a
basis for $E\vert_x\cong\R^k$}\bigr\}.
\end{equation*}
It is a manifold of dimension $n+k^2$. Define an action of
$\GL(k,\R)$ on $F(E)$ by $(A_{ij})_{i,j=1}^k:(x,e_1,\ldots,e_k)
\mapsto\bigl(x,\sum_{j=1}^kA_{1j}e_j,\ldots,\sum_jA_{kj}e_j\bigr)$.
This action is smooth and free, and makes $F(E)$ into a principal
$\GL(k,\R)$-bundle over $X$, with projection $\pi:F(E)\ra X$ given
by~$\pi:(x,e_1,\ldots,e_k)\mapsto x$.

Write $\GL_+(k,\R)$ for the subgroup of $A\in\GL(k,\R)$ with $\det
A>0$. It is a normal subgroup of $\GL(k,\R)$ of index 2, and we
identify the quotient subgroup $\GL(k,\R)/\GL_+(k,\R)$ with $\{\pm
1\}$ by $A\GL_+(k,\R)\mapsto\det A/\md{\det A}$. The {\it
orientation bundle\/} $\Or(E)$ of $E$ is $F(E)/\GL_+(k,\R)$. It is a
principal $\GL(k,\R)/\GL_+(k,\R)=\{\pm 1\}$-bundle over $X$. Points
of the fibre of $\Or(E)$ over $x\in X$ are equivalence classes of
bases $(e_1,\ldots,e_k)$ for $E\vert_x$, where two bases are
equivalent if they are related by a $k\t k$ matrix with positive
determinant.

An {\it orientation\/ $o_E$ for the fibres of\/} $E$ is a continuous
section $o_E:X\ra\Or(E)$ of $\Or(E)$. The pair $(E,o_E)$ is called
an {\it oriented vector bundle\/} on $X$. If $E\ra X$, $F\ra X$ are
vector bundles on $X$ of ranks $k,l$ and $o_E,o_F$ are orientations
on the fibres of $E,F$, we define the {\it direct sum orientation\/}
$o_{E\op F}=o_E\op o_F$ on the fibres of $E\op F$ by saying that if
$x\in X$, $(e_1,\ldots,e_k)$ is an oriented basis for $E\vert_x$ and
$(f_1,\ldots,f_l)$ is an oriented basis for $F\vert_x$, then
$(e_1,\ldots,e_k,f_1,\ldots,f_l)$ is an oriented basis for~$(E\op
F)\vert_x$.

An {\it orientation\/ $o_X$ for\/} $X$ is an orientation for the
fibres of the tangent bundle $TX\ra X$. An {\it oriented manifold\/}
$(X,o_X)$ is a manifold $X$ with an orientation $o_X$. Usually we
leave $o_X$ implicit, and call $X$ an oriented manifold. If $o_X$ is
an orientation on $X$ then the {\it opposite orientation\/} on $X$
is $-o_X$, where $o_X:X\ra\Or(TX)$ is a section,
$-1:\Or(TX)\ra\Or(TX)$ comes from the principal $\{\pm 1\}$-action
on $\Or(TX)$, and $-o_X=-1\ci o_X$ is the composition. When $X$ is
an oriented manifold, we write $-X$ for $X$ with the opposite
orientation.
\label{mc7def1}
\end{dfn}

We shall consider issues to do with orientations on manifolds with
corners, and orientations on fibre products of manifolds. To do
this, we need {\it orientation conventions\/} to say how to orient
boundaries $\pd X$ and fibre products $X\t_ZY$ of oriented manifolds
$X,Y,Z$. Our conventions generalize those of Fukaya, Oh, Ohta and
Ono \cite[Conv.~45.1]{FOOO}, who restrict to $f,g$ submersions.

\begin{conv}{\bf(a)} Let $(X,o_X)$ be an oriented manifold with
corners. Define $o_{\pd X}$ to be the unique orientation on $\pd X$
such that
\e
i_X^*(TX)\cong\R_{\rm out}\op T(\pd X)
\label{mc7eq1}
\e
is an isomorphism of oriented vector bundles over $\pd X$, where
$i_X^*(TX),T(\pd X)$ are oriented by $o_X,o_{\pd X}$, and $\R_{\rm
out}$ is oriented by an outward-pointing normal vector to $\pd X$ in
$X$, and the r.h.s.\ of \eq{mc7eq1} has the direct sum orientation.
\smallskip

\noindent{\bf(b)} Let $(X,o_X),(Y,o_Y),(Z,o_Z)$ be oriented
manifolds with corners, and $f:X\ra Z$, $g:Y\ra Z$ be transverse
smooth maps, so that a fibre product $W=X\t_ZY$ exists in $\Manc$ by
Theorem \ref{mc6thm1}. Then we have an exact sequence of vector
bundles over $W$
\e
\xymatrix@C=8.5pt{0 \ar[r] & TW \ar[rrr]^(0.35){\d\pi_X\op\d\pi_Y}
&&& \pi_X^*(TX)\op \pi_Y^*(TY) \ar[rrrr]^(0.55){\pi_X^*(\d
f)-\pi_Y^*(\d g)} &&&& (f\ci\pi_X)^*(TZ) \ar[r] & 0.}
\label{mc7eq2}
\e
Choosing a splitting of \eq{mc7eq2} induces an isomorphism of vector
bundles
\e
TW \op (f\ci\pi_X)^*(TZ)\cong \pi_X^*(TX)\op \pi_Y^*(TY).
\label{mc7eq3}
\e
Define $o_W$ to be the unique orientation on $W$ such that the
direct sum orientations in \eq{mc7eq3} induced by $o_W,o_Z,o_X,o_Y$
differ by a factor~$(-1)^{\dim Y\dim Z}$.

Here are two was to rewrite this convention in special cases.
Firstly, suppose $f$ is a submersion. Then $\d f:TX\ra f^*(TZ)$ is
surjective, so by splitting the exact sequence $0\ra\Ker\d f\ra
TX\,{\buildrel\d f\over\longra}\,f^*(TZ)\ra 0$ we obtain an
isomorphism
\e
TX\cong \Ker\d f\op f^*(TZ).
\label{mc7eq4}
\e
Give the vector bundle $\Ker\d f\ra X$ the unique orientation such
that \eq{mc7eq4} is an isomorphism of oriented vector bundles, where
$TX,f^*(TZ)$ are oriented using $o_X,o_Z$. As $f:X\ra Z$ is a
submersion so is $\pi_Y:W\ra Y$, and $\d\pi_X$ induces an
isomorphism $\Ker(\d\pi_Y)\ra\pi_X^*(\Ker\d f)$. Thus we have an
exact sequence
\begin{equation*}
\xymatrix@C=15pt{0 \ar[r] & \pi_X^*(\Ker\d f)
\ar[rr]^(0.6){(\d\pi_X)^{-1}} && TW \ar[rr]^(0.45){\d\pi_Y} && \pi_Y^*(TY)
\ar[r] & 0.}
\end{equation*}
Splitting this gives an isomorphism
\e
TW\cong \pi_X^*(\Ker\d f) \op \pi_Y^*(TY).
\label{mc7eq5}
\e
The orientation on $W$ makes \eq{mc7eq5} into an isomorphism of
oriented vector bundles, using $o_Y$ and the orientation on $\Ker\d
f$ to orient the right hand side.

Secondly, let $g$ be a submersion. Then as for
\eq{mc7eq4}--\eq{mc7eq5} we have isomorphisms
\e
TY\cong g^*(TZ)\op\Ker\d g\quad\text{and}\quad TW\cong
\pi_X^*(TX)\op\pi_Y^*(\Ker\d g).
\label{mc7eq6}
\e
We use the first equation of \eq{mc7eq6} to define an orientation on
the fibres of $\Ker\d g$, and the second to define an orientation
on~$W$.
\label{mc7conv}
\end{conv}

If $X$ is an oriented manifold with corners then by induction
Convention \ref{mc7conv}(a) gives orientations on $\pd^kX$ for all
$k=0,1,2,\ldots$. Now Definition \ref{mc2def6} defined a smooth,
free action of $S_k$ on $\pd^kX$ for each $k$. By considering local
models $\R^n_l$ it is easy to see that the action of each $\si\in
S_k$ multiplies the orientation on $\pd^kX$ by $\sign(\si)=\pm 1$.
Since $C_k(X)\cong\pd^kX/S_k$ by \eq{mc2eq4} and $S_k$ does not
preserve orientations for $k\ge 2$, we see that $C_k(X)$ {\it does
not have a natural orientation for\/} $k\ge 2$. We show by example
that $C_k(X)$ need not even be orientable.

\begin{ex} Let $X$ be the 4-manifold with corners $\bigl({\cal
S}^2\t[0,\iy)^2\bigr)/\Z_2$, where
\begin{equation*}
{\cal S}^2\t[0,\iy)^2=\bigl\{(x_2,x_2,x_3,y_1,y_2):\text{$x_j,y_j\in\R$,
$x_1^2+x_2^2+x_3^2=1$, $y_1,y_2\ge 0$}\bigr\},
\end{equation*}
and $\Z_2=\an{\si}$ acts freely on $X$ by
\begin{equation*}
\si:(x_2,x_2,x_3,y_1,y_2)\mapsto(-x_1,-x_2,-x_3,y_2,y_1).
\end{equation*}
There is an orientation on ${\cal S}^2\t[0,\iy)^2$ which is
invariant under $\Z_2$, and so descends to $X$. We have
diffeomorphisms
\begin{equation*}
\pd X\cong C_1(X)\cong {\cal S}^2\t[0,\iy),\quad \pd^2X\cong {\cal S}^2,
\quad C_2(X)\cong \RP^2,
\end{equation*}
and $\pd^kX\!=\!C_k(X)\!=\!\es$ for $k\!>\!2$. Thus $X$ is oriented,
but $C_2(X)$ is not orientable.
\label{mc7ex}
\end{ex}

Given any canonical diffeomorphism between expressions involving
boundaries and fibre products of oriented manifolds with corners, we
can use Convention \ref{mc7conv} to define orientations on each
side. These will be related by some sign $\pm 1$, which we can try
to compute. Here is how to add signs to~\eq{mc6eq2}--\eq{mc6eq5}.

\begin{prop} In Propositions {\rm\ref{mc6prop1}, \ref{mc6prop2}}
and\/ {\rm\ref{mc6prop3},} suppose $X,Y,Z$ are oriented. Then in
oriented manifolds, equations \eq{mc6eq2}--\eq{mc6eq5} respectively
become
\ea
\pd_-^fX&\cong (-1)^{\dim X+\dim Y}X\t_{f,Y,i_Y}\pd Y,
\label{mc7eq7}\\
\pd\bigl(X\t_{f,Z,g}Y\bigr)&\cong \bigl(\pd X\t_{f\ci
i_X,Z,g}Y\bigr) \amalg (-1)^{\dim X+\dim Z}\bigl(X\t_{f,Z,g\ci
i_Y}\pd Y\bigr),
\label{mc7eq8}\\
\pd\bigl(X\t_{f,Z,g}Y\bigr)&\cong \bigl(\pd_+^fX \t_{f_+,Z,g}Y\bigr)
\amalg (-1)^{\dim X+\dim Z}\bigl(X\t_{f,Z,g\ci i_Y}\pd Y\bigr),
\label{mc7eq9}\\
\begin{split}
\pd\bigl(X\t_{f,Z,g}Y\bigr)&\cong\bigl(\pd_+^fX \t_{f_+,Z,g}Y\bigr)
\amalg (-1)^{\dim X+\dim Z}\bigl(X\t_{f,Z,g_+}\pd_+^gY\bigr)\\
&\qquad \amalg\bigl(\pd_-^fX\t_{f_-,\pd Z,g_-}\pd_-^gY\bigr).
\end{split}
\label{mc7eq10}
\ea
\label{mc7prop1}
\end{prop}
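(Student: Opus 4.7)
The plan is to verify each sign by direct comparison of the two orientations at a single point of each connected component, exploiting that the underlying diffeomorphisms \eq{mc7eq7}--\eq{mc7eq10} are already supplied by Propositions~\ref{mc6prop1}, \ref{mc6prop2} and~\ref{mc6prop3}. Orientations are continuous sections of a principal $\{\pm 1\}$-bundle and hence locally constant, so pointwise agreement implies global agreement on each component. In each case I would normalize the situation using Proposition~\ref{mc5prop1} (to write a submersion as a projection $Y'\t Z'\ra Y'$) and Proposition~\ref{mc2prop6}(c)--(d) (to write boundary defining functions as coordinate projections), reducing the problem to a finite linear-algebraic sign chase in the isomorphisms \eq{mc7eq1}--\eq{mc7eq6}.

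For \eq{mc7eq7}, reduce to $X=Y'\t Z'$ with $f=\pi_{Y'}$. Convention~\ref{mc7conv}(a) gives the boundary orientation on $\pd_-^fX\cong\pd Y'\t Z'$ by inserting $\R_{\rm out}$ (the outward normal to $Y'$ at $\pd Y'$) into $TX|_{\pd_-^fX}=TY'\op TZ'$, yielding $T(\pd_-^fX)\cong T\pd Y'\op TZ'$ as oriented bundles. The fibre-product orientation on $X\t_{Y'}\pd Y'$ is computed from Convention~\ref{mc7conv}(b) in its submersion form \eq{mc7eq5}: here \eq{mc7eq4} orients $\Ker\d f\cong TZ'$ up to the sign $(-1)^{\dim Y'\dim Z'}$ coming from commuting $TZ'$ past $TY'$, while the global $(-1)^{\dim\pd Y'\cdot\dim Y'}$ factor of Convention~\ref{mc7conv}(b) equals $+1$ since $\dim\pd Y'\cdot\dim Y'=n(n-1)$ is always even. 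Putting these together and commuting $TZ'$ past $T\pd Y'$ produces the net discrepancy $(-1)^{\dim Z'}=(-1)^{\dim X-\dim Y}=(-1)^{\dim X+\dim Y}$ claimed by \eq{mc7eq7}.

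For \eq{mc7eq8} with $\pd Z=\es$, work piecewise on the two summands of Proposition~\ref{mc6prop2}. On $\pd X\t_ZY$, choose an outward normal $(v,w)\in TW$ whose projection $v\in TX$ is outward to $\pd X$; then $TW\cong\R(v,w)\op T(\pd X\t_ZY)$, and substituting into \eq{mc7eq3} after identifying $\R(v,w)\cong\R v$ via $\d\pi_X$ cancels $\R v$ cleanly from both sides. The two instances of the factor $(-1)^{\dim Y\dim Z}$ in Convention~\ref{mc7conv}(b) use the same dimensions on $W$ and on $\pd X\t_ZY$, so they cancel and no net sign appears. On $X\t_Z\pd Y$ the outward normal has $w\in TY$ outward to $\pd Y$, and to cancel $\R w$ one must first commute it past $\pi_X^*(TX)$, producing $(-1)^{\dim X}$; moreover the two applications of $(-1)^{\dim Y\dim Z}$ now use $\dim Y$ versus $\dim Y-1$, contributing an additional $(-1)^{\dim Z}$. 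The product is the claimed $(-1)^{\dim X+\dim Z}$. Equations \eq{mc7eq9} and \eq{mc7eq10} are then deduced by combining this analysis with \eq{mc7eq7}: the $\pd_+^fX$ and $\pd_+^gY$ summands inherit the signs determined in \eq{mc7eq8}, while the new summand $\pd_-^fX\t_{\pd Z}\pd_-^gY$ in \eq{mc7eq10} carries no net sign because the two applications of \eq{mc7eq7} on the $X$ and $Y$ sides cancel against the double application of Convention~\ref{mc7conv}(b) for the fibre product over $\pd Z$.

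The main obstacle is the accurate bookkeeping of these dimension-dependent signs: each application of Convention~\ref{mc7conv}(b) introduces $(-1)^{\dim Y\dim Z}$, which shifts when one passes from $Y$ to $\pd Y$ or from $Z$ to $\pd Z$, and each interchange of an outward normal with a tangent-space factor introduces further signs. The reduction to product models via Proposition~\ref{mc5prop1} is what keeps the arithmetic tractable; without it, tracking the four equations simultaneously would be unwieldy.
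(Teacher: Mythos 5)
Your proposal is correct and follows essentially the same route as the paper, which proves Proposition \ref{mc7prop1} exactly as sketched in Remark \ref{mc7rem}(ii): reduce to local product models via Proposition \ref{mc5prop1} and carry out the elementary sign bookkeeping dictated by Convention \ref{mc7conv}. Your pointwise exact-sequence treatment of \eq{mc7eq8} (which avoids the simplifying assumption that $f,g$ are submersions) and your verified signs all agree with the paper's.
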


Here are some more identities involving only fibre products:

\begin{prop}{\bf(a)} If\/ $f:X\ra Z,$ $g:Y\ra Z$ are transverse
smooth maps of oriented manifolds with corners then in oriented
manifolds we have
\e
X\t_{f,Z,g}Y\cong(-1)^{(\dim X-\dim Z)(\dim Y-\dim Z)}Y\t_{g,Z,f}X.
\label{mc7eq11}
\e

\noindent{\bf(b)} If\/ $d:V\ra Y,$ $e:W\ra Y,$ $f:W\ra Z,$ $g:X\ra
Z$ are smooth maps of oriented manifolds with corners then in
oriented manifolds we have
\e
V\t_{d,Y,e\ci\pi_W}\bigl(W\t_{f,Z,g}X\bigr)\cong
\bigl(V\t_{d,Y,e}W\bigr)\t_{f\ci\pi_W,Z,g}X,
\label{mc7eq12}
\e
provided all four fibre products are transverse.

\noindent{\bf(c)} If\/ $d:V\ra Y,$ $e:V\ra Z,$ $f:W\ra Y,$ $g:X\ra
Z$ are smooth maps of oriented manifolds with corners then in
oriented manifolds we have
\e
\begin{split}
&V\t_{(d,e),Y\t Z,f\t g}(W\t X)\cong \\
&\quad(-1)^{\dim Z(\dim Y+\dim W)}
(V\t_{d,Y,f}W)\t_{e\ci\pi_V,Z,g}X,
\end{split}
\label{mc7eq13}
\e
provided all three fibre products are transverse.
\label{mc7prop2}
\end{prop}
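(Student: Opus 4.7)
The plan is, for each part, to first verify that the underlying set-level bijection is a diffeomorphism of manifolds with corners---routine from Theorem~\ref{mc6thm1}'s description of the maximal atlas on fibre products, since the obvious bijection has smooth projections to each factor with injective differential, and hence gives a chart---and then to compare orientations via Convention~\ref{mc7conv}(b). Writing $[TW]$ for the top exterior power of $TW$ with its orientation (so that $[V\op W]=[V]\w[W]$ and a transposition of summands multiplies by $(-1)^{\mathrm{rank}\cdot\mathrm{rank}}$), Convention~\ref{mc7conv}(b) asserts, for any fibre product $W=X\t_{f,Z,g}Y$, that
\e
[TW]\w[TZ]=(-1)^{\dim Y\dim Z}\,[TX]\w[TY]
\label{planeq1}
\e
(suppressing pullbacks). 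In each part I apply \eqref{planeq1} to every fibre product on both sides, rearrange summands into a common ordering, and compare exponents modulo~$2$.

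For (b), apply \eqref{planeq1} to $B=W\t_ZX$, $A=V\t_YB$, $C=V\t_YW$ and $A'=C\t_ZX$. Both sides reduce to expressions $[T\cdot]\w[TY]\w[TZ]=(-1)^{(\cdot)}[TV]\w[TW]\w[TX]$. On the LHS the exponent is $\dim B\cdot\dim Y+\dim X\dim Z$; using $\dim B=\dim W+\dim X-\dim Z$ this becomes $\dim W\dim Y+\dim X\dim Y+\dim Y\dim Z+\dim X\dim Z \pmod 2$. On the RHS, after swapping $TX$ with $TY$ (cost $(-1)^{\dim X\dim Y}$) and then $TY$ with $TZ$ (cost $(-1)^{\dim Y\dim Z}$), the exponent is $\dim X\dim Z+\dim X\dim Y+\dim W\dim Y+\dim Y\dim Z$, which matches, giving \eqref{mc7eq12} with no sign. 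For (c), apply \eqref{planeq1} once on the LHS with `$Z$'-role played by $Y\t Z$ and `$Y$'-role by $W\t X$, obtaining exponent $(\dim W+\dim X)(\dim Y+\dim Z)$; and twice on the RHS to $C=V\t_YW$ and $A'=C\t_ZX$, reducing to the same ordering via the analogous two swaps. The difference of exponents simplifies modulo~$2$ to $\dim W\dim Z+\dim Y\dim Z\equiv\dim Z(\dim Y+\dim W)$ (the $\dim X\dim Y$, $\dim X\dim Z$ and $\dim W\dim Y$ contributions cancel in pairs), yielding \eqref{mc7eq13}.

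Part (a) is the subtlest. Applying \eqref{planeq1} to $W_1=X\t_{f,Z,g}Y$ and $W_2=Y\t_{g,Z,f}X$ and performing one swap $TY\leftrightarrow TX$ gives a raw sign of $(-1)^{\dim X\dim Y+\dim X\dim Z+\dim Y\dim Z}$. The target $(\dim X-\dim Z)(\dim Y-\dim Z)$ expands modulo~$2$, using $(\dim Z)^2\equiv\dim Z$, to $\dim X\dim Y+\dim X\dim Z+\dim Y\dim Z+\dim Z$, so a missing factor $(-1)^{\dim Z}$ must be accounted for. It arises because the surjections onto $TZ$ in the exact sequences~\eqref{mc7eq2} underlying the two applications of \eqref{planeq1} are $\pi_X^*(\d f)-\pi_Y^*(\d g)$ for $W_1$ and $\pi_Y^*(\d g)-\pi_X^*(\d f)$ for $W_2$, differing by an overall minus sign; consequently the canonical identification of the two copies of $TZ$ via those surjections reverses orientation, contributing exactly $(-1)^{\dim Z}$. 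Combining this with the raw sign yields \eqref{mc7eq11}. The main obstacle throughout is recognizing and correctly handling this splitting-sign subtlety in part~(a); once dealt with, each part reduces to a short mod-$2$ bookkeeping computation.
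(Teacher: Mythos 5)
Your proof is correct, and it is the ``elementary calculation starting from Convention \ref{mc7conv}'' that the paper has in mind; the only difference is in the computational device. The paper's Remark \ref{mc7rem}(ii) suggests reducing to the case where all maps are submersions and hence, by Proposition \ref{mc5prop1}, locally projections, so that each fibre product becomes an honest product $M\t Z\t N$ and the signs are read off from reordering factors. You instead work directly with the exact sequence \eq{mc7eq2} and top exterior powers, comparing the induced orientations on a common bundle such as $\pi_V^*(TV)\op\pi_W^*(TW)\op\pi_X^*(TX)$. Your route has the advantage of covering general transverse maps in one step (the product-model shortcut literally applies only to submersions, and one must then argue that the sign depends only on the pointwise linear algebra to get the general case), at the cost of having to track the identification of the quotient copy of $TZ$ explicitly. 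You correctly isolated the one place where that identification matters: in (a) the surjections $\pi_X^*(\d f)-\pi_Y^*(\d g)$ and $\pi_Y^*(\d g)-\pi_X^*(\d f)$ differ by an overall sign, contributing the $(-1)^{\dim Z}$ needed to turn the raw reordering sign $(-1)^{\dim X\dim Y+\dim X\dim Z+\dim Y\dim Z}$ into $(-1)^{(\dim X-\dim Z)(\dim Y-\dim Z)}$; in (b) and (c) the corresponding surjections onto $TY$ and $TZ$ agree on both sides, so no such correction arises, and your mod-$2$ exponent computations check out. (In the paper's local model the same check gives exponent $(m+n)z+mn+mz+nz\equiv mn\pmod 2$ with $m=\dim X-\dim Z$, $n=\dim Y-\dim Z$, confirming \eq{mc7eq11} and, implicitly, your $(-1)^{\dim Z}$ bookkeeping.)
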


\begin{rem}{\bf(i)} Equations \eq{mc7eq8}, \eq{mc7eq12} and
\eq{mc7eq13} can be found in Fukaya et al.\ \cite[Lem.~45.3]{FOOO}
for the case of Kuranishi spaces.
\smallskip

\noindent{\bf(ii)} The proofs of Propositions \ref{mc7prop1} and
\ref{mc7prop2} are elementary calculations starting from Convention
\ref{mc7conv}. Here is a way to make these calculations easier. For
simplicity, assume all the smooth maps involved are submersions. By
Proposition \ref{mc5prop1}, submersions are locally projections.
Since identities like \eq{mc7eq7}--\eq{mc7eq13} are local, it is
enough to prove the identities for projections.

Let $M,N,Z$ be oriented manifolds with corners, of dimensions
$m,n,z$. Set $X=M\t Z$ and $Y=Z\t N$, with the product orientations,
and define $f:X\ra Z$, $g:Y\ra Z$ by $f=\pi_Z=g$. Convention
\ref{mc7conv}(b) is arranged so that $W\cong M\t Z\t N$ holds in
oriented manifolds. Exchanging the order in a product of oriented
manifolds yields $X\t Y\cong(-1)^{\dim X\dim Y}Y\t X$. Thus to
compute the sign in \eq{mc7eq11}, for instance, note that
\begin{align*}
&X\t_{f,Z,g}Y=(M\t Z)\t_Z(Z\t N)\cong M\t Z\t N,\\
&Y\t_{g,Z,f}X=(Z\t N)\t_Z(M\t Z)\cong(-1)^{(m+n)z}
(N\t Z)\t_Z(Z\t M)\\
&\quad\cong(-1)^{(m+n)z}N\t Z\t M\cong(-1)^{(m+n)z}(-1)^{mn+mz+nz}M\t Z\t N,
\end{align*}
and then substitute in $m=\dim X-\dim Z$, $n=\dim Y-\dim Z$.
\smallskip

\noindent{\bf(iii)} Ramshaw and Basch \cite{RaBa} prove that there
is a {\it unique\/} orientation convention for transverse fibre
products of manifolds without boundary satisfying the three
conditions: (A) if $X,Y$ are oriented then $X\t_{\{0\}}Y\cong X\t Y$
in oriented manifolds, where $X\t Y$ has the product orientation
from $T(X\t Y)\cong \pi_X^*(TX)\op \pi_Y^*(TY)$; (B) if $f:X\ra Y$
is a smooth map of oriented manifolds then $X\cong Y\t_{\id_Y,Y,f}X$
in oriented manifolds; and (C) equation \eq{mc7eq12} holds.
Convention \ref{mc7conv}(b) satisfies (A)--(C), and so agrees with
that of~\cite{RaBa}.
\label{mc7rem}
\end{rem}

\section{Proof of Theorem \ref{mc6thm1}}
\label{mc8}

Theorem \ref{mc6thm1} follows from the next two propositions. In the
proof we assume Theorem \ref{mc6thm1} for manifolds without
boundary, since this is well known, as in Lang
\cite[Prop.~II.4]{Lang} for instance. The difference between {\it
transverse\/} and {\it strongly transverse\/} $f,g$ in \S\ref{mc6}
appears in part (C) in the proof below: transverse $f,g$ are
strongly transverse if and only if there are no $\sim$-equivalence
classes $E$ of type~(b).

\begin{prop} Let\/ $X,Y,Z$ be manifolds with corners, and\/
$f:X\ra Z,$ $g:Y\ra Z$ be transverse smooth maps. Then the
construction of Theorem\/ {\rm\ref{mc6thm1}} defines a manifold with
corners $W,$ with\/ $\dim W=\dim X+\dim Y-\dim Z$ if\/ $W\ne\es,$
and the maps $\pi_X:W\ra X,$ $\pi_Y:W\ra Y$ are smooth.
\label{mc8prop1}
\end{prop}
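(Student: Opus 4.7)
The plan has four parts: verify topology, construct local charts, check chart compatibility, and verify smoothness of $\pi_X,\pi_Y$ with the claimed dimension. The topology part and smoothness of $\pi_X,\pi_Y$ are immediate: $W$ is a closed subspace of the paracompact Hausdorff $X\t Y$, and by the defining property of charts in Theorem \ref{mc6thm1} the maps $\pi_X\ci\phi,\pi_Y\ci\phi$ are smooth for every chart $\phi$, which gives smoothness of $\pi_X,\pi_Y$. The substance is constructing charts and checking compatibility.

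For the chart construction, fix $(x_0,y_0)\in W$ with $f(x_0)=g(y_0)=z_0$, and let $j,k,l$ be the depths of $x_0,y_0,z_0$. Choose charts on $X,Y,Z$ realising $x_0,y_0,z_0$ as the origin in $\R^m_j,\R^{m'}_k,\R^{m''}_l$ respectively, with coordinates $x_i,y_i,z_\mu$, and let $b_1,\ldots,b_l$ be the boundary defining functions at $z_0$ corresponding to the coordinate hyperplanes $z_1=0,\ldots,z_l=0$. By Definition \ref{mc3def1} together with Proposition \ref{mc2prop6}(b)--(d), each pullback $b_a\ci f$ is either $\equiv 0$ near $x_0$ or equals $x_{\alpha(a)}\cdot p_a$ for some smooth $p_a>0$ and some index $\alpha(a)\in\{1,\ldots,j\}$; similarly $b_a\ci g$ is either $\equiv 0$ or equals $y_{\gamma(a)}\cdot q_a$. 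The first transversality condition $T_{z_0}Z=\d f\vert_{x_0}(T_{x_0}X)+\d g\vert_{y_0}(T_{y_0}Y)$ rules out the simultaneous vanishing case for any $a$, partitioning $\{1,\ldots,l\}$ into disjoint sets $A,B,C$ according to whether both, only $f$, or only $g$ yield a nontrivial pullback.

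The equations $z_\mu\ci f=z_\mu\ci g$ cutting out $W$ in the product chart then decompose as follows. For $\mu=a\le l$: $x_{\alpha(a)}=0$ if $a\in B$, $y_{\gamma(a)}=0$ if $a\in C$, and $x_{\alpha(a)}p_a=y_{\gamma(a)}q_a$ if $a\in A$. For $\mu>l$: equations among smooth real-valued coordinates on the interior directions. For each $a\in A$ introduce a new non-negative coordinate $t_a:=x_{\alpha(a)}p_a=y_{\gamma(a)}q_a$, so that $x_{\alpha(a)}=t_a/p_a$ and $y_{\gamma(a)}=t_a/q_a$ smoothly. The second transversality condition implies that the residual equations from $\mu>l$ have surjective linearisation onto $T_{z_0}S^l(Z)$, so the ordinary implicit function theorem solves $\dim Z-l$ of the remaining interior variables in terms of the others. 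Assembling the surviving free coordinates identifies a neighbourhood of $(x_0,y_0)$ in $W$ with an open subset of $\R^n_{\tilde k}$, where $n=\dim X+\dim Y-\dim Z$ and $\tilde k$ is the number of surviving boundary-type coordinates. The resulting chart $(U,\phi)$ satisfies the conditions of Theorem \ref{mc6thm1}, with \eq{mc6eq1} injective by construction since $\phi$ factors through the embedding of $W$ into $X\t Y$.

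Chart compatibility follows from an inverse function theorem for corners: given two charts $(U,\phi),(U',\phi')$ of the stated type whose images overlap, the transition $\phi^{-1}\ci\phi'$ is continuous, its post-compositions with $\pi_X,\pi_Y$ are smooth by hypothesis, and the injectivity of \eq{mc6eq1} forces the transition itself to be a diffeomorphism. The main obstacle is the combinatorial bookkeeping in the chart construction: the index maps $\alpha,\gamma$ need not be injective, so several boundary components of $Z$ at $z_0$ may collapse onto a single one of $X$ or $Y$, and checking that the $t_a$-substitutions for $a\in A$ interact correctly with the implicit function step in this degenerate situation requires a careful case analysis driven by both transversality hypotheses together.
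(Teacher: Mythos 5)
Your overall strategy coincides with the paper's: work in charts, pull back boundary defining functions of $Z$ under $f$ and $g$, use the first transversality condition to rule out the case where a boundary component of $Z$ is killed by both $f$ and $g$, and use the second (stratum) condition to control the interior directions. But there is a genuine gap, and it sits exactly where you flag ``careful case analysis'' and then stop: the non-injectivity of the index maps $\al,\ga$ is not a technical nuisance to be deferred, it is the core difficulty of the proposition. When several $a\in A$ share the same $\al(a)$ or $\ga(a)$, the substitutions $t_a=x_{\al(a)}p_a=y_{\ga(a)}q_a$ are no longer independent, and one must analyse the equivalence classes generated by $\al(a)=\al(a')$ or $\ga(a)=\ga(a')$ on $A$. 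The paper shows (conditions (A)--(C) in \S\ref{mc8}, derived from Lemma \ref{mc6lem}) that each class $E$ satisfies $\md{\al(E)}+\md{\ga(E)}\in\{\md{E},\md{E}+1\}$; in the second case only \emph{one} non-negative coordinate survives from the whole class, and in the first case (which occurs precisely when $f,g$ are not strongly transverse) \emph{all} the coordinates $x_{\al(a)},y_{\ga(a)}$, $a\in E$, are forced to vanish identically on $W$ near the point, so no free coordinate survives at all. Without this analysis you cannot compute your $\ti k$, cannot verify that the surviving free coordinates number exactly $n=\dim X+\dim Y-\dim Z$ (the paper's count \eq{mc8eq4} gives depth $\depth_Xx_0+\depth_Yy_0-\depth_Zz_0$, using also the injectivity of $\al$ on $B$ and of $\ga$ on $C$, which you never establish), and cannot show the surviving defining functions are independent, which is what makes $\phi$ a homeomorphism onto a neighbourhood and makes \eq{mc6eq1} injective --- ``$\phi$ factors through the embedding of $W$ into $X\t Y$'' is not an argument for injectivity of a derivative.

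Two further points. First, your substitution $x_{\al(a)}=t_a/p_a$ is circular as written, since $p_a$ is a function of the old coordinates including $x_{\al(a)}$ itself; the paper sidesteps this by never constructing coordinates by substitution: it first forms the boundaryless fibre product $\hat V=\hat R\t_{\hat T}\hat S$ of local extensions (where the classical theorem applies), and then exhibits $V=W$ locally as the subset of the $n$-manifold $\hat V$ cut out by $a+b-c$ inequalities with linearly independent differentials, which immediately yields the local model $\R^n_{a+b-c}$. You could repair your version with a further application of the implicit function theorem, but it must be done. Second, smoothness of $\pi_X,\pi_Y$ is not ``immediate from the defining property of charts'': the burden is to show the charts you construct actually have $\pi_X\ci\phi,\pi_Y\ci\phi$ smooth in the strong sense of Definition \ref{mc3def1}, i.e.\ that they pull boundary defining functions of $X,Y$ back to boundary defining functions of $U$ or to zero. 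This is a real verification (it is where weak smoothness and smoothness differ) and the paper carries it out using the same case analysis (i)--(iv); your proposal omits it.
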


\begin{proof} If $W=\es$ the proposition is trivial, so suppose
$W\ne\es$. Then $n=\dim X+\dim Y-\dim Z\ge 0$, since $f,g$ are
transverse. Let $(x,y)\in W$, so that $x\in X$ and $y\in Y$ with
$f(x)=g(y)=z$ in $Z$. We will first construct a chart $(U,\phi)$ on
$W$ satisfying the conditions of Theorem \ref{mc6thm1}, with $U$
open in $\R^n_d$ and $0\in U$ with~$\phi(0)=(x,y)$.

Choose charts $(R,\th),(S,\psi),(T,\xi)$ on $X,Y,Z$ respectively
with $0\in R,S,T$ and $\th(0)=x$, $\psi(0)=y$, $\xi(0)=z$, where
$R,S,T$ are open in $\R^k_a,\R^l_b,\R^m_c$ with $k=\dim X$, $l=\dim
Y$, $m=\dim Z$. Making $R,S$ smaller if necessary suppose
$f\ci\th(R),g\ci\psi(S)\subseteq\xi(T)$. Then $\ti f=\xi^{-1}\ci
f\ci\th:R\ra T$ and $\ti g=\xi^{-1}\ci g\ci\psi:S\ra T$ are smooth
maps between subsets of $\R^k,\R^l,\R^m$ in the sense of Definition
\ref{mc2def1}. So by definition we can choose open subsets $\hat
R\subseteq\R^k$, $\hat S\subseteq\R^l$, $\hat T\subseteq\R^m$ with
$R=\hat R\cap\R^k_a$, $S=\hat S\cap\R^l_b$, $T=\hat T\cap \R^m_c$
and smooth maps $\hat f:\hat R\ra\hat T$, $\hat g:\hat S\ra\hat T$
with $\hat f\vert_R=\ti f$, $\hat g\vert_S=\ti g$.

Now $f,g$ are transverse, so $\ti f,\ti g$ are transverse on $R,S$,
and as this is an open condition, by making $\hat R,\hat S$ smaller
if necessary we can make $\hat f,\hat g$ transverse. Since $\hat
f:\hat R\ra\hat T$, $\hat g:\hat S\ra\hat T$ are transverse smooth
maps of manifolds without boundary, by \cite[Prop.~II.4]{Lang} the
fibre product $\hat V=\hat R\t_{\hat f,\hat T,\hat g}\hat S$ exists
as an $n$-manifold without boundary, and it is also easy to show
that charts on $\hat V$ are characterized by the injectivity of
\eq{mc6eq1}. Define $V=\bigl\{(r,s)\in\hat V:r\in R$, $s\in
S\bigr\}$. We will show that near $(0,0)\in V$, the embedding of $V$
in $\hat V$ is modelled on the inclusion of $\R^n_d$ in $\R^n$, so
that $V$ is a manifold with corners.

The local boundary components of $R\subseteq\R^k_a$ at 0 are
$\{r_i=0\}$ for $i=1,\ldots,a$, where $(r_1,\ldots,r_k)$ are the
coordinates on $R$ and $\hat R$. Write $\be_i=\th_*(\{r_i=0\})$ for
the corresponding local boundary component of $X$ at $x$. Then
$\be_1,\ldots,\be_a$ are the local boundary components of $X$ at
$x$. Similarly, write $(s_1,\ldots,s_l)$ for coordinates on $S,\hat
S$ and $\ti\be_1,\ldots,\ti\be_b$ for the local boundary components
of $Y$ at $y$, where $\ti\be_i=\psi_*(\{s_i=0\})$, and
$(t_1,\ldots,t_m)$ for coordinates on $T,\hat T$ and
$\dot\be_1,\ldots,\dot\be_c$ for the local boundary components of
$Z$ at $z$, where~$\dot\be_i=\xi_*(\{t_i=0\})$.

Define subsets $P^f,P^g\subseteq\{1,\ldots,c\}$ by
$P^f=\bigl\{i:(x,(z,\dot\be_i))\in\Xi_-^f\bigr\}$ and
$P^g=\bigl\{i:(y,(z,\dot\be_i))\in\Xi_-^g\bigr\}$. Define maps
$\Pi^f:P^f\ra\{1,\ldots,a\}$ and $\Pi^g:P^g\ra\{1,\ldots,b\}$ by
$\Pi^f(i)=j$ if $\xi_-^f\bigl(x,(z,\dot\be_i)\bigr)=(x,\be_j)$ and
$\Pi^g(i)=j$ if $\xi_-^g\bigl(y,(z,\dot\be_i)\bigr)=(y,\ti\be_j)$.
We can express the maps $C(f),C(g)$ of \eq{mc3eq2} over $x,y,z$ as
follows: if $A\subseteq\{1,\ldots,a\}$ and
$B\subseteq\{1,\ldots,b\}$ then
\e
\begin{split}
C(f):\bigl(x,\{\be_i:i\in A\}\bigr)&\longmapsto
\bigl(z,\{\dot\be_j:j\in P^f,\; \Pi^f(j)\in A\}\bigr),\\
C(g):\bigl(y,\{\ti\be_i:i\in B\}\bigr)&\longmapsto
\bigl(z,\{\dot\be_j:j\in P^g,\; \Pi^g(j)\in B\}\bigr).
\end{split}
\label{mc8eq1}
\e

Lemma \ref{mc6lem} on $C(f),C(g)$ over $x,y,z$, which uses $f,g$
transverse, then turns out to be equivalent to the following
conditions on $P^f,P^g,\Pi^f,\Pi^g$:
\begin{itemize}
\setlength{\itemsep}{0pt}
\setlength{\parsep}{0pt}
\item[(A)] $\Pi^f(P^f\cap P^g)\cap
\Pi^f(P^f\sm P^g)=\es$ and $\Pi^g(P^f\cap P^g)\cap \Pi^g(P^g\sm
P^f)=\es$.
\item[(B)] $\Pi^f\vert_{P^f\sm P^g}\!:\!P^f\!\sm\! P^g\!\ra\!\{1,
\ldots,a\}$, $\Pi^g\vert_{P^g\sm P^f}\!:\!P^g\!\sm\!P^f\!\ra\!
\{1,\ldots,b\}$ are injective.
\item[(C)] Let $\approx$ be the equivalence relation on $P^f\cap P^g$
generated by $i\approx j$ if $\Pi^f(i)=\Pi^f(j)$ or
$\Pi^f(i)=\Pi^f(j)$. Then for each $\approx$-equivalence class
$E\subseteq P^f\cap P^g$ we have either
\begin{itemize}
\setlength{\itemsep}{0pt}
\setlength{\parsep}{0pt}
\item[(a)] $\md{\Pi^f(E)}+\md{\Pi^g(E)}=\md{E}+1$, or
\item[(b)] $\md{\Pi^f(E)}+\md{\Pi^g(E)}=\md{E}$.
\end{itemize}
Here it is automatic that
$\md{\Pi^f(E)}+\md{\Pi^g(E)}\le\md{E}+1$, and Lemma \ref{mc6lem}
implies that $\md{\Pi^f(E)}+\md{\Pi^g(E)} \ge\md{E}$. The number
of equivalence classes of type (a) is~$\md{\Pi^f(P^f\cap P^g)}+
\md{\Pi^g(P^f\cap P^g)}-\md{P^f\cap P^g}$.
\end{itemize}
Also, if $i\in\{1,\ldots,c\}\sm (P^f\cup P^g)$ then
$\bigl(x,(z,\dot\be_i)\bigr)\in\Xi^f_+$ and
$\bigl(y,(z,\dot\be_i)\bigr)\in\Xi^g_+$. These imply that $\d
f\vert_x(T_xX),\d g\vert_y(T_yY)\subseteq T_z\dot\be_i$, so that $\d
f\vert_x(T_xX)+\d g\vert_y(T_yY)\subseteq T_z\dot\be_i\subsetneq
T_zZ$, contradicting $f,g$ transverse. This proves:
\begin{itemize}
\setlength{\itemsep}{0pt}
\setlength{\parsep}{0pt}
\item[(D)] $P^f\cup P^g=\{1,\ldots,c\}$.
\end{itemize}

Now $V$ is cut out in $\hat V$ by $r_i\ge 0$, $i\le a$, and $s_i\ge
0$, $i\le b$, that is
\e
\begin{split}
V=\bigl\{\bigl((r_1,\ldots,r_k),(s_1,\ldots,s_l)\bigr)\in\hat V:\,&
\text{$r_i\ge 0$, $i=1,\ldots,a$, and}\\
&\text{$s_i\ge 0$, $i=1,\ldots,b$}\bigr\}.
\end{split}
\label{mc8eq2}
\e
We claim that making $R,S,\hat R,\hat S$ smaller if necessary, the
following hold:
\begin{itemize}
\setlength{\itemsep}{0pt}
\setlength{\parsep}{0pt}
\item[(i)] If $i\in P^f\sm P^g$ then the inequality $r_{\Pi^f(i)}\ge
0$ does not change $V$, and can be omitted in~\eq{mc8eq2}.
\item[(ii)] If $i\in P^g\sm P^f$ then the inequality
$s_{\Pi^g(i)}\ge 0$ does not change $V$, and can be omitted
in~\eq{mc8eq2}.
\item[(iii)] If $i,j\in P^f\cap P^g$ with $i\approx j$, then the four
inequalities $r_{\Pi^f(i)}\ge 0$, $r_{\Pi^f(j)}\ge 0$,
$s_{\Pi^g(i)}\ge 0$, $s_{\Pi^g(j)}\ge 0$ have the same effect in
$\hat V$. Thus for each $\approx$-equivalence class $E$ in
$P^f\cap P^g$, it is sufficient to impose only one of the
$2\md{E}$ inequalities $r_{\Pi^f(i)}\ge 0$, $s_{\Pi^g(i)}\ge 0$
in \eq{mc8eq2} for $i\in E$ to define~$V$.
\item[(iv)] If $E$ is an $\approx$-equivalence class of type (b) in
(C) above, then $r_{\smash{\Pi^f(i)}}\equiv s_{\Pi^g(i)}\equiv
0$ in $\hat V$ for all $i\in E$. Thus we can omit the
inequalities $r_{\Pi^f(i)}\ge 0$, $s_{\Pi^g(i)}\ge 0$ in
\eq{mc8eq2} for $i\in E$ to define~$V$.
\end{itemize}
Here we mean that all of the inequalities $r_i\ge 0$, $s_i\ge 0$
which (i)--(iv) allow us to omit may all be omitted simultaneously
without changing~$V$.

To prove (i), note that as $\bigl(\xi(T),t_i\ci\xi^{-1}\bigr)$ is a
boundary defining function for $Z$ at $(z,\dot\be_i)$, and
$\bigl(\th(R),r_{\Pi^f(i)}\ci\th^{-1}\bigr)$ is a boundary defining
function for $X$ at $(x,\be_{\Pi^f(i)})$, and
$\xi_-^f\bigl(x,(z,\dot\be_i)\bigr)=(x,\be_{\Pi^f(i)})$, Definition
\ref{mc3def1} and Proposition \ref{mc2prop6}(b) imply that
$t_i\ci\xi^{-1}\ci f\equiv (r_{\Pi^f(i)}\ci\th^{-1})\cdot G$ on
$\th(T)$ near $x$ for some smooth $G:\th(T)\ra(0,\iy)$. Hence
$t_i\ci\hat f\equiv r_{\Pi^f(i)}\cdot\hat G$ on $\hat R$ near 0 for
some smooth $\hat G:\hat R\ra(0,\iy)$ defined near 0. Therefore
making $R,\hat R$ smaller if necessary, $r_{\Pi^f(i)}\ge 0$ is
equivalent to $t_i\ci\hat f\ci\pi_{\hat R}\ge 0$ on $\hat V$. But
$t_i\ci\hat f\ci\pi_{\hat R}\!=\!t_i\ci\hat g\ci\pi_{\hat S}$, and
$t_i\!\ge\! 0$ on $T$, so $t_i\ci\hat g\!\ge\! 0$ on $S$ as $\hat g$
maps~$S\!\ra\! T$.

Hence the inequality $r_{\smash{\Pi^f(i)}}\ge 0$ is unnecessary in
\eq{mc8eq2} provided we restrict to $S$ in $\hat S$, that is,
provided we impose all the conditions $s_i\ge 0$. In fact we need
more than this: we must also be able to omit conditions
$s_{\Pi^g(j)}\ge 0$ when this is allowed by (ii)--(iv). This is
possible because the $s_{\Pi^g(j)}\ge 0$ omitted in (ii)--(iv)
correspond to different conditions $t_j\ge 0$ in $\hat T$ than the
condition $t_i\ge 0$ we are considering, since (i) deals with $i\in
P^f\sm P^g$, (ii) with $j\in P^g\sm P^f$, and (iii)--(iv) with $j\in
P^f\cap P^g$, which are disjoint sets. This proves (i), and also
that we can omit $r_{\Pi^f(i)}\ge 0$ in (i) independently of other
omissions in (i)--(iv). The proof for (ii) is the same.

For (iii), if $i\in P^f\cap P^g$ then by Definition \ref{mc3def1}
and Proposition \ref{mc2prop6}(b) as above we see that making
$R,\hat R,S,\hat S$ smaller if necessary, $r_{\smash{\Pi^f(i)}}\ge
0$ is equivalent to $t_i\ci\hat f\ci\pi_{\hat R}\ge 0$ on $\hat V$,
which is also equivalent to $s_{\smash{\Pi^g(i)}}\ge 0$ on $\hat V$.
Suppose $i,j\in P^f\cap P^g$ with $\Pi^f(i)=\Pi^f(j)$. Then the
conditions $r_{\smash{\Pi^f(i)}}\ge 0$, $r_{\smash{\Pi^f(j)}}\ge 0$
are the same, and are equivalent to $s_{\smash{\Pi^g(i)}}\ge 0$ and
$s_{\smash{\Pi^g(j)}}\ge 0$. Similarly, if $\Pi^g(i)=\Pi^g(j)$ then
the conditions $r_{\smash{\Pi^f(i)}}\ge 0$, $r_{\smash{\Pi^f(j)}}\ge
0$, $s_{\smash{\Pi^g(i)}}\ge 0$ and $s_{\smash{\Pi^g(j)}}\ge 0$ are
all equivalent. Since these two cases generate $\approx$, part (iii)
follows.

For (iv), let $E$ be an $\approx$-equivalence class of type (b).
Define submanifolds $\hat R_E,\hat S_E,\hat T_E$ in $\hat R,\hat
S,\hat T$ by
\begin{align*}
\hat R_E&=\bigl\{(r_1,\ldots,r_k)\in\hat R:r_{\smash{\Pi^f(i)}}=0,\;
i\in E\bigr\}, \\
\hat S_E&=\bigl\{(s_1,\ldots,s_l)\in\hat S:s_{\smash{\Pi^g(i)}}=0,\;
i\in E\bigr\}, \\
\hat T_E&=\bigl\{(t_1,\ldots,t_m)\in\hat T:t_i=0,\; i\in E\bigr\}.
\end{align*}
Making $R,S,\hat R,\hat S$ smaller if necessary, $\hat f$ maps $\hat
R_E\ra \hat T_E$ and $\hat g$ maps $\hat S_E\ra\hat T_E$. As $\hat
f,\hat g$ are transverse, an argument similar to Lemma \ref{mc6lem}
shows that $\hat f:\hat R_E\ra\hat T_E$ and $\hat g:\hat S_E\ra\hat
T_E$ are transverse, so the fibre product $\hat R_E\t_{\hat T_E}\hat
S_E$ exists as a manifold, and is a submanifold of $\hat V=\hat
R\t_{\hat T}\hat S$. Now
\begin{equation*}
\dim \hat R_E\!\t_{\hat T_E}\hat
S_E\!=\!(k\!-\!\md{\Pi^f(E)})\!+\!(l\!-\!\md{\Pi^g(E)})\!-\!
(m\!-\!\md{E})=k\!+\!l\!-\!m\!=\!\dim\hat V,
\end{equation*}
since $E$ is of type (b). Thus $\hat R_E\t_{\hat T_E}\hat S_E$ is
open in $\hat V$, as they are of the same dimension, and contains
$(0,0)$. So making $R,S,\hat R,\hat S$ smaller if necessary, we have
$\hat R_E\t_{\hat T_E}\hat S_E=\hat V$, proving (iv). There are no
further issues about simultaneous omissions in~(i)--(iv).

Choose a subset $Q\subseteq P^f\cap P^g$ such that $Q$ contains
exactly one element of each $\approx$-equivalence class of type (a)
in $P^f\cap P^g$, and no elements of $\approx$-equivalence classes
of type (b) . Then \eq{mc8eq2} and (i)--(iv) above imply that
\e
\begin{split}
V=\bigl\{\bigl((r_1,&\ldots,r_k),(s_1,\ldots,s_l)\bigr)\in\hat V:
r_i\ge 0,\; i\in\{1,\ldots,a\}\sm\Pi^f(P^f),\\
&s_i\ge 0,\; i\in\{1,\ldots,b\}\sm\Pi^g(P^g), \quad r_{\Pi^f(i)}\ge
0,\; i\in Q\bigr\}.
\end{split}
\label{mc8eq3}
\e
For the first condition $r_i\ge 0$, $i\in\{1,\ldots,a\}
\sm\Pi^f(P^f)$ in \eq{mc8eq3}, there are
\begin{equation*}
a-\bmd{\Pi^f(P^f)}=a-\bmd{\Pi^f(P^f\cap P^g)}-\bmd{\Pi^f(P^f\sm
P^g)}=a-\bmd{\Pi^f(P^f\cap P^g)}-\bmd{P^f\sm P^g}
\end{equation*}
inequalities, using (A) above in the first step and (B) in the
second. Similarly, for the second condition $s_i\ge 0$,
$i\in\{1,\ldots,b\} \sm\Pi^g(P^g)$ there are $b-\bmd{\Pi^g(P^f\cap
P^g)}-\bmd{P^g\sm P^f}$ inequalities. For the third there are
$\md{Q}=\md{\Pi^f(P^f\cap P^g)}+\md{\Pi^g(P^f\cap P^g)}-\md{P^f\cap
P^g}$ inequalities by (C) above. Hence in total there are
\e
\begin{split}
\bigl(&a-\bmd{\Pi^f(P^f\cap P^g)}-\bmd{P^f\sm P^g}\bigr)+
\bigl(b-\bmd{\Pi^g(P^f\cap P^g)}-\bmd{P^g\sm P^f}\bigr)\\
&\qquad+\bigl(\bmd{\Pi^f(P^f\cap P^g)}+\bmd{\Pi^f(P^f\cap
P^g)}-\md{P^f\cap P^g}\bigr)\\
&=\!a\!+\!b\!-\!\md{P^f\sm P^g}\!-\!\md{P^g\sm P^f}\!-\!\md{P^f\cap
P^g}\!=\!a\!+\!b\!-\!\md{P^f\cup P^g}\!=\!a\!+\!b\!-\!c\!\!
\end{split}
\label{mc8eq4}
\e
inequalities $r_i\ge 0$, $s_i\ge 0$ appearing in \eq{mc8eq3}, using
(D) at the last step.

Define a vector subspace $L$ of $T_{(0,0)}\hat V$ by
\e
\begin{split}
L&=\bigl\{\bigl((r_1,\ldots,r_k),(s_1,\ldots,s_l)\bigr)\in
T_{(0,0)}\hat V:\text{$r_i=0$, $i\le a$, $s_j=0$, $j\le b$}\bigr\}.
\end{split}
\label{mc8eq5}
\e
That is, we replace each inequality $r_i\ge 0$, $s_i\ge 0$ in
\eq{mc8eq2} by $r_i=0$, $s_i=0$. By the proof of the equivalence of
\eq{mc8eq2} and \eq{mc8eq3} using (i)--(iv), we see that
\e
\begin{split}
L&=\bigl\{\bigl((r_1,\ldots,r_k),(s_1,\ldots,s_l)\bigr)\in
T_{(0,0)}\hat V: r_{\Pi^f(i)}=0 ,\; i\in Q,\\
&\quad r_i=0,\; i\in\{1,\ldots,a\}\sm\Pi^f(P^f),\; s_i=0,\;
i\in\{1,\ldots,b\}\sm\Pi^g(P^g)\bigr\},
\end{split}
\label{mc8eq6}
\e
replacing inequalities $r_i\ge 0$, $s_i\ge 0$ in \eq{mc8eq3} by
$r_i=0$, $s_i=0$.

Now $T_z(S^c(Z))=\d f\vert_x(T_x(S^a(X)))+\d g\vert_y(T_y(S^b(Y)))$
by Definition \ref{mc6def1}. As there is a natural isomorphism
\begin{equation*}
L\cong \bigl\{u\op v\in T_x(S^a(X))\op T_y(S^b(Y)): \d f\vert_x(u)=
\d g\vert_y(v)\bigr\},
\end{equation*}
we see that
\e
\begin{split}
\dim L&=\dim S^a(x)+\dim S^b(Y)-\dim S^c(Z)\\
&=(p-a)+(q-b)-(r-c)=\dim\hat V-(a+b-c).
\end{split}
\label{mc8eq7}
\e
Since by \eq{mc8eq4} there are $a+b-c$ equalities $r_i=0$, $s_i=0$
in \eq{mc8eq6}, equation \eq{mc8eq7} implies that the conditions
$r_i=0$, $s_i=0$ in \eq{mc8eq6} are transverse, so the inequalities
$r_i\ge 0$, $s_i\ge 0$ in \eq{mc8eq3} are transverse at $(0,0)$ in
$\hat V$. That is, the corresponding 1-forms $\d r_i\vert_{(0,0)}$,
$\d s_i\vert_{(0,0)}$ are linearly independent at $T^*_{(0,0)}\hat
V$. This is an open condition in $\hat V$. Since $\hat V$ is a
manifold without boundary of dimension $n$, it follows that $V$ is
near $(0,0)$ a manifold with corners of dimension $n$, locally
modelled on $\R^n_{a+b-c}$. Making $R,S$ smaller if necessary, $V$
becomes an $n$-manifold with corners.

Let $(U,\phi')$ be a chart on $V$, with $U$ open in $\R^n_{a+b-c}$
and $0\in U$ with $\phi'(0)=(0,0)$. Define $\phi:U\ra W=X\t_ZY$ by
$\phi=(\th\t\psi)\ci\phi'$. Then $\phi$ is a homeomorphism with an
open set in $W$, since $\phi':U\ra V$ is a homeomorphism with an
open set in $V$ and $\th\t\psi:V\ra W$ is a homeomorphism with an
open set in $W$. Also $\phi(0)=(x,y)$ as $\phi'(0)=(0,0)$ and
$\th(0)=x$, $\psi(0)=y$. Thus $(U,\phi)$ is a chart on the
topological space $W$ whose image contains~$(x,y)$.

Now $(U,\phi')$ extends to a chart $(\hat U,\hat\phi')$ on $\hat V$.
But $\hat V$ comes from Theorem \ref{mc6thm1} for manifolds without
boundary, and so
\begin{equation*}
\d(\pi_{\hat S}\ci\hat\phi')\vert_{\hat u}\op\d(\pi_{\hat T}\ci\hat
\phi')\vert_{\hat u}:T_{\hat u}\hat U\longra T_{\hat s}\hat S\op
T_{\hat t}\hat T
\end{equation*}
is injective for all $\hat u\in\hat U$ with $\hat\phi'(\hat u)=(\hat
s,\hat t)$. Restricting to $U$ shows that
\e
\d(\pi_S\ci\phi')\vert_u\op\d(\pi_T\ci\phi')\vert_u:T_uU\longra T_sS
\op T_tT
\label{mc8eq8}
\e
is injective for all $u\in U$ with $\phi'(u)=(s,t)$. But if $u\in U$
with $\phi'(u)=(s,t)$ and $\th(s)=x'$, $\psi(t)=y'$ then
$\d(\pi_X\ci\phi)\vert_u=\d\th\vert_s\ci\d(\pi_S\ci\phi')\vert_u$
and $\d(\pi_Y\ci\phi)\vert_u=\d\psi\vert_t\ci\d(\pi_T\ci\phi')
\vert_u$, where $\d\th\vert_s:T_sS\ra T_{x'}X$ and
$\d\psi\vert_t:T_tT\ra T_{y'}Y$ are isomorphisms as
$(S,\th),(T,\psi)$ are charts on $X,Y$. So composing \eq{mc8eq8}
with $\d\th\vert_s\t \d\psi\vert_t$ shows that
\begin{equation*}
\d(\pi_X\ci\phi)\vert_u\op\d(\pi_Y\ci\phi)\vert_u:T_uU\longra
T_{x'}X\op T_{y'}Y
\end{equation*}
is injective, so $(U,\phi)$ satisfies the conditions of
Theorem~\ref{mc6thm1}.

We have now shown that $W$ can be covered by charts $(U,\phi)$
satisfying the conditions of Theorem \ref{mc6thm1}. For such
$(U,\phi)$, observe that \eq{mc6eq1} actually maps
\e
\begin{split}
\d(\pi_X\ci\phi)_{u}&\op\d(\pi_Y\ci\phi)_{u}:
T_{u}U\cong\R^n\longra\\
&\bigl\{(\al,\be)\in T_xX\op T_yY:\d f\vert_x(\al)=\d g\vert_y(\be)
\bigr\}.
\end{split}
\label{mc8eq9}
\e
Now the r.h.s.\ of \eq{mc8eq9} has dimension $n$ by transversality
of $f,g$, and \eq{mc8eq9} is injective, so it is an isomorphism. Let
$(U,\phi)$ and $(V,\psi)$ be two such charts, and $u\in U$, $v\in V$
with $\phi(u)=\psi(v)=(x,y)$. Since \eq{mc8eq9} and its analogue for
$(V,\psi)$ are isomorphisms, we see that $\psi^{-1} \ci\phi$ is
differentiable at $u$ and its derivative is an isomorphism, the
composition of \eq{mc8eq9} with the inverse of its analogue for
$\psi$. Using the same argument for all $u\in \phi^{-1}(\psi(V))$,
we find that $\psi^{-1}\ci\phi:\phi^{-1}(\psi(V))\ra
\psi^{-1}(\phi(U))$ is a diffeomorphism, and so $(U,\phi),(V,\psi)$
are automatically compatible.

Therefore the collection of all charts on $W$ satisfying the
conditions of Theorem \ref{mc6thm1} is an atlas. But any chart
compatible with all charts satisfying Theorem \ref{mc6thm1} also
satisfies the conditions of Theorem \ref{mc6thm1}, so this atlas is
maximal. Also the topological space $W=X\t_ZY$ is paracompact and
Hausdorff, since $X,Y,Z$ are paracompact and Hausdorff as they are
manifolds. Hence the construction of Theorem \ref{mc6thm1} does make
$W$ into an $n$-manifold with corners.

It remains to show that $\pi_X:W\ra X,$ $\pi_Y:W\ra Y$ are smooth.
They are clearly continuous, since $W$ was defined as the
topological fibre product. Locally $\pi_X,\pi_Y$ are identified with
$\pi_S:V\ra S$ and $\pi_T:V\ra T$ above, which are restrictions to
$V$ of $\pi_{\hat S}:\hat V\ra\hat S$ and $\pi_{\hat T}:\hat
V\ra\hat T$. But $\hat V$ is a fibre product of manifolds without
boundary, so $\pi_{\hat S},\pi_{\hat T}$ are smooth, which implies
that $\pi_S,\pi_T$ are weakly smooth. To prove $\pi_S,\pi_T$ are
smooth we note that $(S,s_i)$ for $i=1,\ldots,a$ are boundary
defining functions on $S$ at $(s,\{s_i=0\})$, and $(T,t_j)$ for
$j=1,\ldots,b$ are boundary defining functions on $T$ at
$(t,\{t_j=0\})$, and we show using the discussion of (i)--(iv) that
the pullbacks to $V$ satisfy the conditions of Definition
\ref{mc3def1}. As smoothness is a local condition, $\pi_X,\pi_Y$ are
smooth.
\end{proof}

\begin{prop} In the situation of Proposition\/ {\rm\ref{mc8prop1},}
$W,\pi_X,\pi_Y$ are a fibre product\/ $X\t_{f,Z,g}Y$ in\/~$\Manc$.
\label{mc8prop2}
\end{prop}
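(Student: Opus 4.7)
The plan is to verify the universal property defining $W$ as a fibre product in $\Manc$. Given a manifold with corners $W'$ and smooth maps $\pi_X':W'\ra X,$ $\pi_Y':W'\ra Y$ with $f\ci\pi_X'=g\ci\pi_Y',$ the relations $\pi_X\ci h=\pi_X'$ and $\pi_Y\ci h=\pi_Y'$ force $h:W'\ra W$ to be $h(w')=\bigl(\pi_X'(w'),\pi_Y'(w')\bigr),$ which proves uniqueness. This $h$ takes values in the set-theoretic fibre product $W$ since $f\ci\pi_X'=g\ci\pi_Y',$ and is continuous because $W$ carries the subspace topology from $X\t Y$. The substance of the proposition is therefore to show $h$ is smooth in the sense of Definition~\ref{mc3def1}.

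For weak smoothness, I would work locally around $w'\in W'$ in a chart $(U,\phi)$ on $W$ produced by the construction of Proposition~\ref{mc8prop1}, in which $\phi=(\th\t\psi)\ci\phi'$ and $\phi':U\ra V\subseteq\hat V=\hat R\t_{\hat T}\hat S$ realises $V$ as an open subset of the fibre product $\hat V$ of manifolds without boundary. Pick a chart $(U'',\phi'')$ on $W'$ at $w'$. Since $\pi_X'\ci\phi''$ and $\pi_Y'\ci\phi''$ are smooth maps of subsets of Euclidean spaces in the sense of Definition~\ref{mc2def1}, they extend locally to smooth maps into open neighbourhoods containing $\hat R$ and $\hat S,$ which combine into a locally smooth map into $\hat R\t\hat S$. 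On $U''$ this map has image in $\hat V$ by $f\ci\pi_X'=g\ci\pi_Y',$ and the universal property of $\hat V$ as a fibre product in the category of manifolds without boundary then yields a smooth map $U''\ra\hat V$ whose image in fact lies in $V$. Pulling back through the chart $\phi'$ on $V$ gives weak smoothness of $\phi^{-1}\ci h\ci\phi''$ as a map into $U$.

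To verify the extra boundary condition of Definition~\ref{mc3def1}, recall from the analysis of cases~(i)--(iv) in the proof of Proposition~\ref{mc8prop1} that every local boundary component of $W$ at $(x,y)$ is cut out by a single inequality, either $r_i\ge 0$ with $i\in\{1,\ldots,a\}\sm\Pi^f(P^f),$ or $s_j\ge 0$ with $j\in\{1,\ldots,b\}\sm\Pi^g(P^g),$ or $r_{\Pi^f(i)}\ge 0$ with $i\in Q$. By Proposition~\ref{mc2prop6}(d), the corresponding boundary defining function on $W$ near $(x,y)$ may be taken as the pullback via $\pi_X$ or $\pi_Y$ of a boundary defining function on $X$ or $Y$ at the relevant local boundary component. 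Composing with $h$ replaces $\pi_X,\pi_Y$ by $\pi_X',\pi_Y',$ so the smoothness of $\pi_X'$ or $\pi_Y'$ directly supplies the required dichotomy (vanishing identically near $w',$ or being a boundary defining function on $W'$ at a unique $\ti\be$) for $h$. The main subtlety is the case $i\in Q,$ in which the boundary defining function on $W$ admits two such descriptions, one via $\pi_X$ at $(x,\be_{\Pi^f(i)})$ and one via $\pi_Y$ at $(y,\ti\be_{\Pi^g(i)})$; but these differ by a positive smooth factor near $(x,y)$ by Proposition~\ref{mc2prop6}(b), so the local boundary components on $W'$ produced independently by Definition~\ref{mc3def1} for $\pi_X'$ and for $\pi_Y'$ must coincide, ensuring the condition holds consistently for $h$.
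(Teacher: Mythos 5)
Your proposal is correct and follows essentially the same route as the paper: uniqueness and continuity of $h$ from the topological fibre product, weak smoothness by reducing to the boundaryless fibre product $\hat V=\hat R\t_{\hat T}\hat S$ via the local model of Proposition \ref{mc8prop1}, and the boundary condition of Definition \ref{mc3def1} checked against the transverse defining inequalities $r_i\ge 0$, $s_i\ge 0$ of \eq{mc8eq3}, using smoothness of $\pi_X'$ or $\pi_Y'$ to supply the required dichotomy. Your extra remark on the two compatible descriptions of the boundary defining function when $i\in Q$ is consistent with the paper, which simply selects one of them.
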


\begin{proof} By definition $f\ci\pi_X=g\ci\pi_Y$. Suppose $W'$ is
a manifold with corners and $\pi_X':W'\ra X$, $\pi_Y':Y'\ra Y$ are
smooth maps with $f\ci\pi_X'=g\ci\pi_Y'$. We must show there exists
a unique smooth map $h:W'\ra W$ with $\pi_X'=\pi_X\ci h$ and
$\pi_Y'=\pi_Y\ci h$. Since $W$ is a fibre product at the level of
topological spaces, there is a unique continuous map $h:W'\ra W$
given by $h(w')=\bigl(\pi_X'(w'),\pi_Y'(w')\bigr)$ with
$\pi_X'=\pi_X\ci h$ and $\pi_Y'=\pi_Y\ci h$. We must show $h$ is
smooth.

Let $w'\in W'$, with $\pi'_X(w')=x\in X$ and $\pi'_Y(w')=y\in Y$, so
that $f(x)=g(y)=z\in Z$. Let $(R,\th),(S,\psi),\ldots$ be as in the
proof of Proposition \ref{mc8prop1}. Then $\hat V=\hat R\t_{\hat
T}\hat S$ is a fibre product of manifolds without boundary, and
$V=R\t_TS\subseteq\hat V$ is a manifold with corners, and
$\th\t\psi:V\ra W$ is a diffeomorphism with an open subset of $W$.
Let $U'$ be an open neighbourhood of $w'\in W'$ such that
$\pi_X'(U')\subseteq\th(S)$ and $\pi_Y'(U')\subseteq\psi(T)$.
Consider the map $\ti h=(\th\t\psi)^{-1}\ci h\vert_{U'}:U'\ra
V\subseteq\hat V$. We will show $\ti h$ is smooth. This implies
$h\vert_{U'}=(\th\t\psi)\ci\ti h$ is smooth, so $h$ is smooth as
this is a local condition.

As $\pi_{\hat S}\ci\ti h=\th^{-1}\ci\pi'_X$ and $\pi_{\hat T}\ci\ti
h=\psi^{-1}\ci\pi'_Y$ are smooth, and $\hat V=\hat R\t_{\hat T}\hat
S$ is a fibre product of manifolds, we see that $\ti h:U'\ra \hat V$
is smooth, and therefore $\ti h:U'\ra V$ is weakly smooth. To show
$\ti h:U'\ra V$ is smooth, we must verify the additional condition
in Definition \ref{mc3def1}. It is enough to do this at $w'\in W$
and $(0,0)\in V$. The proof of Proposition \ref{mc8prop1} shows that
$V$ is given by \eq{mc8eq3}, and the inequalities $r_i\ge 0$,
$s_i\ge 0$ in \eq{mc8eq3} are transverse. Therefore, if $\be$ is a
local boundary component of $V$ at $(0,0)$, then either (a)
$(V,r_i)$ is a boundary defining function for $V$ at $((0,0),\be)$
for some $r_i\ge 0$ appearing in \eq{mc8eq3}, or (b) $(V,s_i)$ is a
boundary defining function for $V$ at $((0,0),\be)$ for some $s_i\ge
0$ in~\eq{mc8eq3}.

In case (a), as $\bigl(\th(U), r_i\ci\th^{-1}\bigr)$ is a local
boundary defining function for $X$ at $(x,\th_*(\{r_i=0\})$, and
$\pi_X':W'\ra X$ is smooth, by Definition \ref{mc3def1} either
$r_i\ci\th^{-1}\ci\pi_X'\equiv 0$ near $w'$ in $W$ or
$\bigl((\pi_X')^{-1}(\th(U)), r_i\ci\th^{-1}\ci\pi_X'\bigr)$ is a
boundary defining function for $W'$ at some $(w',\ti\be)$. Since
$r_i\ci\ti h=r_i\ci\th^{-1}\ci\pi'_X\vert_{U'}$ and $U'$ is an open
neighbourhood of $w'$ in $W$ it follows that either $r_i\ci\ti
h\equiv 0$ near $w'$, or $(V,r_i\ci\ti h)$ is a boundary defining
function for $U'$ at some $(w',\ti\be)$. This proves the additional
condition in Definition \ref{mc3def1} in case (a). The proof for (b)
is the same, using $\pi_Y':W'\ra Y$ smooth. Thus $\ti h$, and hence
$h$, is smooth.
\end{proof}

\section{Proof of Theorem \ref{mc6thm2}}
\label{mc9}

We first construct bijections \eq{mc6eq9}--\eq{mc6eq10}. Let $X,Y,Z$
be manifolds with corners, and $f:X\ra Z,$ $g:Y\ra Z$ be strongly
transverse smooth maps, and write $W$ for the fibre product
$X\t_{f,Z,g}Y$, which we proved exists as a manifold with corners in
\S\ref{mc8}. Let $(x,y)\in W$, so that $x\in X$ and $y\in Y$ with
$f(x)=g(y)=z\in Z$. Use all the notation of Proposition
\ref{mc8prop1}, so that the local boundary components of $X$ at $x$
are $\be_1,\ldots,\be_a$, of $Y$ at $y$ are $\ti\be_1,\ldots,
\ti\be_b$, and of $Z$ at $z$ are $\dot\be_1,\ldots,\dot\be_c$. Then
over $x,y,z$, the maps $C(f),C(g)$ are given explicitly by
\eq{mc8eq1} in terms of $P^f,P^g\subseteq\{1,\ldots,c\}$ and maps
$\Pi^f:P^f\ra\{1,\ldots,a\}$ and~$\Pi^g:P^g\ra\{1,\ldots,b\}$.

Properties of these $P^f,P^g,\Pi^f,\Pi^g$ are given in (A)--(D) of
the proof of Proposition \ref{mc8prop1}. In addition, as $f,g$ are
strongly transverse, there are no $\approx$-equivalence classes $E$
of type (b) in part (C), since if $E$ is such a class then
\eq{mc8eq1} gives
\e
C(f)\bigl(x,\{\be_i:i\!\in\! \Pi^f(E)\}\bigr)\!=\!
C(g)\bigl(y,\{\ti\be_i:i\!\in\! \Pi^g(E)\}\bigr)\!=\!
\bigl(z,\{\dot\be_j:j\!\in\! E\}\bigr),
\label{mc9eq1}
\e
and $j=\md{\Pi^f(E)}$, $k=\md{\Pi^g(E)}$, $l=\md{E}$ satisfy
$j,k,l>0$ and $j+k=l$, contradicting Definition \ref{mc6def2}. Using
\eq{mc8eq1} and these properties of $P^f,P^g,\Pi^f,\Pi^g$ we can
describe the points of the r.h.s.\ of \eq{mc6eq9} over $x,y,z$
explicitly when $i=1$. We divide such points into three types:
\begin{itemize}
\setlength{\itemsep}{0pt}
\setlength{\parsep}{0pt}
\item[(i)] $\bigl((x,\{\be_i\}),(y,\es)\bigr)$ for
$i\in\{1,\ldots,a\}\sm\Pi^f(P^f)$ lies in the term on the
r.h.s.\ of \eq{mc6eq9} with $i\!=\!j\!=\!1$ and $k\!=\!l\!=\!0$,
as $C(f)(x,\{\be_i\})=C(g)(y,\es)=(z,\es)$.
\item[(ii)] $\bigl((x,\es),(y,\{\ti\be_i\})\bigr)$ for
$i\in\{1,\ldots,b\}\sm\Pi^g(P^g)$ lies in the term on the
r.h.s.\ of \eq{mc6eq9} with $i\!=\!k\!=\!1$ and $j\!=\!l\!=\!0$,
as $C(f)(x,\es)=C(g)(y,\{\ti\be_i\})=(z,\es)$.
\item[(iii)] $\bigl((x,\{\be_i:i\in \Pi^f(E)\}),(y,\{\ti\be_i:i\in
\Pi^g(E)\})\bigr)$ for $E$ a $\approx$-equivalence class of type
(a) in part (C) lies in the term on the r.h.s.\ of \eq{mc6eq9}
with $i=1$, $j=\md{\Pi^f(E)}$, $k=\md{\Pi^g(E)}$, $l=\md{E}$,
since then \eq{mc9eq1} holds, and $i=1=j+k-l$.
\end{itemize}

Now $W$ near $(x,y)$ is diffeomorphic to $V$ near $(0,0)$, where $V$
is given in \eq{mc8eq3}, and the inequalities $r_i\ge 0$, $s_i\ge 0$
in \eq{mc8eq3} are transverse. Thus, the local boundary components
of $W$ at $(x,y)$ correspond to the inequalities $r_i\ge 0$, $s_i\ge
0$ appearing in \eq{mc8eq3}. These in turn correspond to points in
the r.h.s.\ of \eq{mc6eq9} with $i=1$ as follows:
\begin{itemize}
\setlength{\itemsep}{0pt}
\setlength{\parsep}{0pt}
\item The local boundary component $r_i=0$ for
$i\in\{1,\ldots,a\}\sm\Pi^f(P^f)$ of $V$ at $(0,0)$ corresponds to
the point $\bigl((x,\{\be_i\}),(y,\es)\bigr)$ of type (i).
\item The local boundary component $s_i=0$ for
$i\in\{1,\ldots,b\}\sm\Pi^g(P^g)$ of $V$ at $(0,0)$ corresponds to
the point $\bigl((x,\es),(y,\{\ti\be_i\})\bigr)$ of type (ii).
\item The local boundary component $r_{\smash{\Pi^f(i)}}=0$ for $i\in
Q$ of $V$ at $(0,0)$ corresponds to the point $\bigl((x,\{\be_i:i\in
\Pi^f(E)\}),(y,\{\ti\be_i:i\in \Pi^g(E)\})\bigr)$ of type (iii),
where $E$ is the unique $\approx$-equivalence class containing $i$.

(Note that $Q$ contains one element of each $\approx$-equivalence
class.)
\end{itemize}
The natural map $\coprod_{i\ge 0}C(W)\ra \coprod_{j\ge
0}C(X)\t_{\coprod_{l\ge 0}C(Z)}\coprod_{k\ge 0}C(Y)$ referred to in
the last part of Theorem \ref{mc6thm2} agrees with this
correspondence.

This proves that \eq{mc6eq9} is a bijection for $i=1$. For the
general case, suppose $(w,\{\hat\be_1,\ldots,\hat\be_i\})\in
C_i(W)$. Let $(w,\{\hat\be_a\})$ correspond to
$\bigl((x,J_a),(y,K_a)\bigr)$ as above for $a=1,\ldots,i$. Then
$C(f)(x,J_a)=C(g)(y,K_a)=(z,L_a)$ for some $L_a$. It is easy to show
that as $\hat\be_1,\ldots,\hat\be_i$ are distinct, the subsets
$J_1,\ldots,J_i$ are disjoint, and $K_1,\ldots,K_i$ are disjoint,
and $L_1,\ldots,L_i$ are disjoint. Also $C(
f)(x,J_1\amalg\cdots\amalg J_i)=C(g)(y,K_1\amalg\cdots\amalg
K_i)=(z,L_1\amalg\cdots\amalg L_i)$. Hence
\begin{align*}
&\bigl((x,J_1\amalg\cdots\amalg J_i),(y,K_1\amalg\cdots\amalg
K_i)\bigr)\in \\
&\bigl(C_j(X)\cap C(f)^{-1}(C_l(Z))\bigr)\t_{C(f),C_l(Z),C(g)}\!
\bigl(C_k(Y)\cap C(g)^{-1}(C_l(Z))\bigr),
\end{align*}
where $j=\md{J_1}+\cdots+\md{J_i}$, $k=\md{K_1}+\cdots+\md{K_i}$ and
$l=\md{L_1}+\cdots+\md{L_i}$. As $1=\md{J_a}+\md{K_a}-\md{L_a}$ for
$a=1,\ldots,i$, we have $i=j+k-l$. So mapping $(w,\{\hat\be_1,
\ldots,\hat\be_i\})$ to $\bigl((x,J_1\amalg\cdots\amalg J_i),(y,K_1
\amalg\cdots\amalg K_i)\bigr)$ takes the l.h.s.\ of \eq{mc6eq9} to
the r.h.s.\ of \eq{mc6eq9}. Generalizing the argument in the $i=1$
case proves that this map is a bijection, so \eq{mc6eq9} is a
bijection, and thus \eq{mc6eq10} is a bijection.

Now let $(w,\{\hat\be_{a_1},\ldots,\hat\be_{a_i}\})\in C_i(W)$ with
\begin{align*}
C(\pi_X):(w,\{\hat\be_{a_1},\ldots,\hat\be_{a_i}\})&\longmapsto
(x,\{\be_{b_1},\ab\ldots,\ab\be_{b_j}\})\in C_j(X),\\
C(\pi_Y):(w,\{\hat\be_{a_1},\ldots,\hat\be_{a_i}\})&\longmapsto
(y,\{\ti\be_{c_1},\ldots,\ti\be_{c_k}\})\in C_k(Y),\\
C(f):(x,\{\be_{b_1},\ldots,\be_{b_j}\})&\longmapsto
(z,\{\dot\be_{d_1},\ldots,\dot\be_{d_l}\})\in C_l(Z),
\quad\text{and}\\
C(g):\bigl(y,\{\ti\be_{c_1},\ldots,\ti\be_{c_k}\}\bigr)&\longmapsto
(z,\{\dot\be_{d_1},\ldots,\dot\be_{d_l}\})\in C_l(Z).
\end{align*}
Define an immersion $\io^i_W:C_i(W)\ra W$ by
$\io^i_W:(w,\{\hat\be_{a_1},\ldots,\hat\be_{a_i}\})\mapsto w$, and
similarly for $X,Y,Z$. Then $i_Z^l\ci C(f)\equiv f\ci i_X^j$ near
$(x,\{\be_{b_1},\ldots,\be_{b_j}\})$ in $C_j(X)$, and so on, so we
have a commutative diagram
\e
\begin{gathered}
\xymatrix@R=5pt@C=17pt{
T_{(w,\{\hat\be_{a_1},\ldots,\hat\be_{a_i}\})}C_i(W) \ar[rr]_{\d
C(\pi_Y)} \ar[dd]_{\d C(\pi_X)} \ar[dr]_(0.65){\d \io_W^i} &&
T_{(y,\{\ti\be_{c_1},\ldots,\ti\be_{c_k}\})}C_k(Y)
\ar[dd]^(0.3){\d C(g)} \ar[dr]^(0.6){\d\io_Y^k} \\
& T_wW \ar[rr]^(0.2){\d\pi_Y} \ar[dd]^(0.3){\d\pi_X}
&& T_yY \ar[dd]_(0.45){\d g} \\
T_{(x,\{\be_{b_1},\ldots,\be_{b_j}\})}C_j(X) \ar[rr]^(0.4){\d C(f)}
\ar[dr]_(0.6){\d\io_X^j} && T_{(z,\{\dot\be_{d_1},\ldots,
\dot\be_{d_l}\})}C_l(Z) \ar[dr]^(0.65){\d\io_Z^l} \\
& T_xX \ar[rr]^{\d f} && T_zZ.}
\end{gathered}
\label{mc9eq2}
\e

Since $W=X\t_ZY$ in $\Manc$, in \eq{mc9eq2} we have an isomorphism
\e
\d\pi_X\op\d\pi_Y: T_wW\,{\buildrel\cong\over\longra}\,
\Ker\bigl((\d f\op -\d g):T_xX\op T_yY\longra T_zZ\bigr).
\label{mc9eq3}
\e
As $\io_W^i,\io_X^j,\io_Y^k,\io_Z^l$ are immersions, the diagonal
maps $\d\io_W^i,\d\io_X^j,\d\io_Y^k,\d\io_Z^l$ in \eq{mc9eq2} are
injective. Thus we can identify $T_{(w,\{\hat\be_{a_1},\ldots,
\hat\be_{a_i}\})}C_i(W)$ with its image in $T_wW$ under $\d\io_W^i$,
and similarly for $X,Y,Z$. The proof of Proposition \ref{mc8prop1}
implies that for each local boundary component $\hat\be_a$ of $W$ at
$w$, the tangent space $T_w\hat\be_a\subset T_wW$ is the pullback
under $\d f$ or $\d g$ of $T_x\be_b$ or $T_y\ti\be_c$ for
appropriate local boundary components $\be_b$ of $X$ at $x$ or
$\ti\be_c$ of $Y$ at $y$. So using that \eq{mc9eq3} is an
isomorphism and $\d\io_W^i$ is injective, we see that
\e
\begin{split}
&\d C(\pi_X)\op\d
C(\pi_Y):T_{(w,\{\hat\be_{a_1},\ldots,\hat\be_{a_i}\})}
C_i(W) \longra\\
&\Ker\bigl(\d C(f)\op -\d C(g):T_{(x,\{\be_{b_1},\ldots,\be_{b_j}
\})}C_j(X)\op T_{(y,\{\ti\be_{c_1},\ldots,\ti\be_{c_k}\})}C_k(Y)\\
&\qquad\qquad\qquad\qquad \longra T_{(z,\{\dot\be_{d_1},\ldots,
\dot\be_{d_l}\})}C_l(Z)\bigr)
\end{split}
\label{mc9eq4}
\e
is an isomorphism. That is, \eq{mc9eq4} is injective as it is a
restriction of \eq{mc9eq3} which is injective, and it is surjective
as the equations defining
$T_{(w,\{\hat\be_{a_1},\ldots,\hat\be_{a_i}\})} C_i(W)$ in $T_wW$
are pullbacks of equations defining
$T_{(x,\{\be_{b_1},\ldots,\be_{b_j}\})} C_j(X)$ in $T_xX$ or
defining $T_{(y,\{\ti\be_{c_1},\ldots,\ti\be_{c_k}\})}C_k(Y)$ in
$T_yY$.

As $W=X\t_ZY$ we have $\dim W=\dim X+\dim Y-\dim Z$, and $i=j+k-l$
from above. So $\dim C_i(W)=\dim C_j(X)+\dim C_k(Y)-\dim C_l(Z)$ as
$\dim C_i(W)=\dim W-i$,\ldots. Together with \eq{mc9eq4} an
isomorphism this implies
\e
\begin{split}
&T_{(z,\{\dot\be_{d_1},\ldots, \dot\be_{d_l}\})}C_l(Z)= \\
&\qquad\d C(f)\bigl(T_{(x,\{\be_{b_1},\ldots,\be_{b_j} \})}C_j(X)
\bigr)+\d
C(g)\bigl(T_{(y,\{\ti\be_{c_1},\ldots,\ti\be_{c_k}\})}C_k(Y) \bigr).
\end{split}
\label{mc9eq5}
\e

Let $x\in S^p(X)$, $y\in S^q(Y)$ and $z\in S^r(Z)$. Then as $f$ is
transverse we have
\e
T_z(S^r(Z))=\d f\vert_x(T_x(S^p(X)))+\d g\vert_y(T_y(S^q(Y)))
\label{mc9eq6}
\e
Clearly $(x,\{\be_{b_1},\ldots,\be_{b_j}\})\in S^{p-j}(C_j(X))$ and
$\d\io^j_X\bigl(T_{(x,\{\be_{b_1},\ldots,\be_{b_j}\})}
S^{p-j}C_j(X)\bigr)\ab =T_xS^p(X)$. So pulling back \eq{mc9eq6}
using $\io^j_X,\io^k_Y,\io^l_Z$ yields
\e
\begin{split}
T_{(z,\{\dot\be_{d_1},\ldots,\dot\be_{d_l}\})}&(S^{r-l}(C_l(Z)))=\\
&\d C(f)\vert_{(x,\{\be_{b_1},\ldots,\be_{b_j}\})}
(T_{(x,\{\be_{b_1},\ldots,\be_{b_j}\})} (S^{p-j}(C_j(X))))+\\
&\d C(g)\vert_{(y,\{\ti\be_{c_1},\ldots,\ti\be_{c_k}\})}
(T_{(y,\{\ti\be_{c_1},\ldots,\ti\be_{c_k}\})}(S^{q-k}(C_k(Y)))).
\end{split}
\label{mc9eq7}
\e
Equations \eq{mc9eq5} and \eq{mc9eq7} imply that the fibre product
in \eq{mc6eq9} is transverse, as we have to prove. So the fibre
products in \eq{mc6eq9} exist in $\Manc$ by Theorem \ref{mc6thm1},
and the natural map from the left hand side of \eq{mc6eq9} to the
right hand side is smooth. We have already shown it is a bijection,
and \eq{mc9eq4} an isomorphism implies that this natural map induces
isomorphisms on tangent spaces. Therefore \eq{mc6eq9} is a
diffeomorphism. This completes the proof of Theorem~\ref{mc6thm2}.

\medskip

\noindent{\small\sc The Mathematical Institute, 24-29 St. Giles,
Oxford, OX1 3LB, U.K.}

\noindent{\small\sc E-mail: \tt joyce@maths.ox.ac.uk}

\end{document}